\newtheorem{theorem}{Theorem}[section]
\newtheorem{lemma}[theorem]{Lemma}
\newtheorem{corollary}[theorem]{Corollary}
\newtheorem{proposition}[theorem]{Proposition}
\theoremstyle{definition}
\newtheorem{definition}[theorem]{Definition}
\newtheorem{notation}[theorem]{Notation}
\newtheorem{setup}[theorem]{Setup}
\newtheorem{remark}[theorem]{Remark}
\newtheorem{example}[theorem]{Example}
\title[Chow Quotients]{Chow Quotients of $\CC^*$-actions}
\author[Occhetta]{Gianluca Occhetta}
\address{Dipartimento di Matematica, Universit\`a degli Studi di Trento, via
Sommarive 14 I-38123 Povo di Trento (TN), Italy}
\email{gianluca.occhetta@unitn.it, eduardo.solaconde@unitn.it}
\author[Romano]{Eleonora A. Romano}
\address{Dipartimento di Matematica, Dipartimento di Eccellenza 2023-2027, Universit\`a degli Studi di Genova, via Dodecaneso 35, I-16146, Genova (GE), Italy}
\email{eleonoraanna.romano@unige.it}
\author[Sol\'a Conde]{Luis E. Sol\'a Conde}
\author[Wi\'sniewski]{Jaros\l{}aw A. Wi\'sniewski}
\address{Instytut Matematyki UW, Banacha 2, 02-097 Warszawa, Poland}
\email{J.Wisniewski@uw.edu.pl}
\subjclass[2010]{Primary 14L30; Secondary 14E30, 14L24, 14M17}
\thanks{First, third and fourth author supported by the Department of Mathematics of the University of Trento. First, second and third author partially supported by INdAM--GNSAGA. Second author supported in part by the MIUR Excellence Department Project awarded to Dipartimento di Matematica, Universit\`a di Genova, CUP D33C23001110001. 
Fourth author supported by Polish National Science Center project 2022/47/B/ST1/01896.}
\newcommand\ignore[1]{}
\DeclareMathOperator{\HH}{H}
\newcommand\TT{{\mathbb{T}}}
\newcommand\CC{{\mathbb{C}}}
\newcommand\PP{{\mathbb{P}}}
\newcommand\QQ{{\mathbb{Q}}}
\def\B{{\mathbb B}}
\def\C{{\mathbb C}}
\def\G{{\mathbb G}}
\def\P{{\mathbb P}}
\def\Q{{\mathbb Q}}
\def\R{{\mathbb R}}
\def\Z{{\mathbb Z}}
\def\cB{{\mathcal B}}
\def\cE{{\mathcal E}}
\def\cI{{\mathcal I}}
\def\cL{{\mathcal L}}
\def\cM{{\mathcal M}}
\def\cN{{\mathcal{N}}}
\def\cO{{\mathcal{O}}}
\def\cU{{\mathcal U}}
\def\cY{{\mathcal Y}}
\def\Q{{\mathbb{Q}}}
\def\G{{\mathbb{G}}}
\def\operatorname#1{\mathop{\rm #1}\nolimits}
\def\DA{{\rm A}}
\def\Proj{\operatorname{Proj}}
\def\Chow{\operatorname{Chow}}
\def\Ext{\operatorname{Ext}}
\def\Exc{\operatorname{Exc}}
\def\Hilb{\operatorname{Hilb}}
\def\Hom{\operatorname{Hom}}
\def\Pic{\operatorname{Pic}}
\def\Hom{\operatorname{Hom}}
\def\codim{\operatorname{codim}}
\def\id{\operatorname{id}}
\def\Int{\operatorname{Int}}
\def\rk{\operatorname{rk}}
\def\Bs{\operatorname{Bs}}
\def\det{\operatorname{det}}
\def\alg{\operatorname{alg}}
\def\Nef{{\operatorname{Nef}}}
\def\NU{{\operatorname{N^1}}}
\def\Mov{{\operatorname{Mov}}}
\def\GX{\mathcal{G}\!X}
\def\CX{\mathcal{C}\!X}
\def\HX{\mathcal{H}\!X}
\def\ss{\operatorname{ss}}
\newcommand{\pb}{\ar@{}[dr]|{\text{\pigpenfont J}}}
\def\ol{\overline}
\newcommand{\shse}[3]{0 ~\ra ~#1~ \lra ~#2~ \lra ~#3~ \ra~ 0}
\newcommand{\xleftrightarrow}[2][]{\ext@arrow 3359\leftrightarrowfill@{#1}{#2}}
\newcommand{\xdasharrow}[2][->]{
\tikz[baseline=-\the\dimexpr\fontdimen22\textfont2\relax]{
\node[anchor=south,font=\scriptsize, inner ysep=1.5pt,outer xsep=2.2pt](x){#2};
\draw[shorten <=3.4pt,shorten >=3.4pt,dashed,#1](x.south west)--(x.south east);
}}
\newcommand{\hooklongrightarrow}{\lhook\joinrel\longrightarrow}
\newcommand\m{{\mathfrak m}}
\newcommand\ra{{\ \rightarrow\ }}
\newcommand\lra{\longrightarrow}
\newcommand\iso{{\ \cong\ }}
\def\Mo{\operatorname{\hspace{0cm}M}}
\def\prul{s}
\def\prur{d}
\begin{document}
\begin{abstract}
Given an action of the one-dimensional torus on a projective variety, the associated Chow quotient arises as a natural parameter space of invariant $1$-cycles, which dominates the GIT quotients of the variety.  
In this paper we explore the relation between the Chow and the GIT quotients of a variety, showing how to construct explicitly the former upon the latter via successive blowups under suitable assumptions. We also discuss conditions for the smoothness of the Chow quotient, and present some examples in which it is singular.
\end{abstract}

\maketitle
\tableofcontents
\section{Introduction}\label{sec:intro}

The relation of birational geometry with the theory of quotients of $\CC^*$-actions has been acknowledged since the early days of  Mori Theory and the Minimal Model Program; for early expositions on this matter see the contributions by Thaddeus and Reid \cite{Thaddeus1996}, \cite{ReidFlip}. W{\l}odarczyk in  \cite{Wlodarczyk} used the notion of birational cobordism, constructed by Morelli in the toric case \cite{Morelli}, to prove the Weak Factorization Conjecture,  
which asserts that given a birational map of smooth projective varieties $\begin{tikzcd}[cramped]Y\ar[r,<->, dashed]&Y'\end{tikzcd}$, there exists a sequence of blowups and blowdowns along smooth centers (or, possibly, isomorphisms) resolving this map  (see also \cite{AKMW} and \cite{Wlodarczyk2009} for a broader view on this topic). In W{\l}odarczyk's proof a variety with a $\CC^*$-action is constructed, the cobordism $X$, whose two distinguished quotients are the varieties $Y$ and $Y'$; the crucial step in the argument is about elementary cobordisms associated to flips. 

Recently, the relation between birational geometry and $\C^*$-actions has been reconsidered under the viewpoint of Mumford's Geometric Invariant Theory (GIT, see \cite{MFK}), Variation of GIT (VGIT, see \cite{DolgachevHu}), and the Bia{\l}ynicki-Birula decomposition (see \cite{BB,BBS}). This viewpoints allow to go deeper into that relation: projective varieties endowed with $\C^*$-actions can be seen as geometric realizations of birational transformations, and linearizations of these actions provide natural factorizations of the given transformations (cf. \cite{WORS4, WORS5, WORS3,WORS1, BRUS}). 

Given a $\C^*$-action on a projective variety $X$ and a linearization of an ample line bundle $L$ on $X$, the geometric quotients of the action can be described in terms of the weights $a_0, \dots, a_r$, which we associate to fixed point components of the action; we will accordingly denote them by  $\GX_{i,i+1}$, for $i=0, \dots, r-1$ (see Section \ref{sssec:quotients}). The positive integer $r$  is called the criticality of the action. In particular,  the two extremal quotients, $\GX_{0,1}$ and $\GX_{r-1,r}$ parametrize $1$-dimensional orbits converging to points in the sink and the source of the action, respectively.

The action defines birational maps among the geometric quotients, in particular, between the two extremal ones; the birational map between them has a natural factorization via the intermediate quotients: 
\[
\begin{tikzcd}[
  column sep={2.8em,between origins},
  row sep={3.em,between origins}]
  \GX_{0,1}\ar[rrr, dashed,"\mbox{\tiny bir.}"]&&&\GX_{i,i+1}\ar[rrrrrr, dashed,"\mbox{\tiny bir}"]
  &&&
  &&&\GX_{r-1,r}
  \end{tikzcd}
\]
We prove that, under suitable assumptions, this factorization is related to the birational geometry of the variety $X$ (see Section \ref{sec:prun}): all the geometric quotients $\GX_{i,i+1}$ can be seen as the extremal fixed point components of $\C^*$-equivariant birational modifications $X_{i,j}$, $0\leq i<j\leq r$, of $X$, called {\em prunings}. We then have, for every $i<j$, a commutative diagram of rational maps: 

\begin{equation}\label{fig:LowerT}
\begin{tikzcd}[
  column sep={2.8em,between origins},
  row sep={2.5em,between origins}]
  \GX_{0,1}\ar[rr, dashed,"\mbox{\tiny bir.}"]&&\GX_{i,i+1}\ar[rrrr, dashed,"\mbox{\tiny bir}"]
  &&&&\GX_{j-1,j}\ar[rrrr, dashed,"\mbox{\tiny bir}"]&&&&\GX_{r-1,r}\\
  &&&&&&&&&&\\
  &&&&X_{i,j}\arrow[uurr, dashed,"\mbox{\,\,\tiny GIT quot.}",labels=below right] \arrow[uull, dashed,"\mbox{\,\,\tiny GIT quot.}",labels=below left]
  &&&&&&\\
  &&&&&&&&&&\\
  &&&&&X_{0,r}\arrow[uuuurrrrr, dashed,"\mbox{\,\,\tiny GIT quot.}",labels=below right] \arrow[uuuulllll, dashed,"\mbox{\,\,\tiny GIT quot.}",labels=below left]\arrow[uul, dashed,"\mbox{\,\,\tiny bir.}",labels=below right] 
  &&&&&  
\end{tikzcd}
\end{equation}

The technical conditions under which we prove this result are two. The first is the non existence of finite isotropy subgroups for the action  --that we call {\em equalization}-- which allows us to prove that the birational modifications $X_{i,j}$ are smooth. For instance, under this hypothesis, the variety $X_{0,r}$ is the blowup of $X$ along its sink and source.  Furthermore we assume the non existence of inner $\C^*$-invariant divisors in $X$, which implies that the birational modifications among the $\GX_{i,j}$'s and among the $X_{i,j}$'s are compositions of flips. 

Given a projective variety $X$  with a nontrivial $\C^*$-action, the  {\em normalized Chow quotient}  $\CX$ of the action is  the normalization of the closure of the subscheme of $\Chow(X)$ parametrizing the closures of the general orbits of the action (see Section \ref{sec:Chow} for details). The normalized Chow quotient can be defined in the broader setting of actions by reductive groups; in the case of torus actions it is known to have a universal property, coming from the fact that it can be seen as the normalized limit quotient of the direct system of GIT quotients of the variety in question by the action of the torus (cf. \cite[Corollary 2.6]{BHK}).

We explore here this idea, from the viewpoint of birational geometry. All the geometric quotients of the action are birationally dominated by $\CX$; in particular, we get a resolution  of the birational map $\psi:\GX_{0,1}\dashrightarrow \GX_{r-1,r}$:
$$
\xymatrix{&\CX\ar[dr]\ar[dl]&\\\GX_{0,1}\ar@{-->}[rr]^{\psi}&&\GX_{r-1,r}}
$$
It then makes sense to study the properties of this resolution and ask ourselves, for instance, whether it gives a Strong Factorization of $\psi$, that is whether the morphisms $\CX\to \GX_{0,1}$, $\CX\to \GX_{r-1,r}$ are compositions of blowups along smooth centers. What we prove is a structure theorem that shows how to obtain $\CX$ recursively out of the Chow quotients of the birational modifications $X_{i,j}$ described above, via subsequent blowups. More precisely, our main statement is the following (see $\S$\ref{sssec:quotients} for the definition of $B^\pm(Y)$):

\begin{theorem}\label{thm:main}
Let $(X,L)$ be a polarized pair, with $X$ smooth, endowed with an equalized nontrivial $\C^*$-action such that  $\codim(B^\pm(Y),X)>1$ for every inner fixed point component $Y$. Denote by $r$ be the criticality of the action, by $X_{i,j}$, $0\leq i<j\leq r$, the corresponding prunings of $X$, and by $\CX_{i,j}$ their normalized Chow quotients. Then $\CX=\CX_{0,r}$, and we have a commutative diagram:
\begin{equation}\label{fig:Chow0}
\begin{tikzcd}[
  column sep={2.7em,between origins},
  row sep={3.2em,between origins}]
&&&&&&\CX_{0,r}\arrow[rd,"\prur"] \arrow[dl,"\prul",labels=above left] &&&&&&\\
&&&&&\CX_{0,r-1}\arrow[rd,"\prur"] \arrow[dl,"\prul",labels=above left]&&\CX_{1,r}\arrow[rd,"\prur"] \arrow[dl,"\prul",labels=above left]&&&&&\\
&&&&\CX_{0,r-2} \arrow[dl,"\prul",labels=above left]\arrow[rd,"\prur"]&&\CX_{1,r-1}\arrow[rd,"\prur"] \arrow[dl,"\prul",labels=above left]&&\CX_{2,r}\arrow[dl,"\prul",labels=above left]\arrow[rd,"\prur"]&&&&\\
&&& \arrow[ld,dotted,no head]&& \arrow[ld,dotted,no head]&&\arrow[rd,dotted,no head]&&\arrow[rd,dotted,no head]&&&\\
&&\CX_{0,2}\arrow[rd,"\prur"] \arrow[dl,"\prul",labels=above left]&&\CX_{1,3} \arrow[dl,"\prul",labels=above left]&&\dots&&\CX_{r-3,r-1}\arrow[rd,"\prur"] &&\CX_{r-2,r}\arrow[rd,"\prur"] \arrow[dl,"\prul",labels=above left]&&\\
&\GX_{0,1}\arrow[rd]&&\GX_{1,2}\arrow[ld]\arrow[rd]&&\dots&&\dots&&\GX_{r-2,r-1}\arrow[ld]\arrow[rd]&&\GX_{r-1,r}\arrow[ld]&\\
&&\GX_{1,1}&&\GX_{2,2}&&\dots&&\GX_{r-2,r-2}&&\GX_{r-1,r-1}&&
\end{tikzcd}
\end{equation}
where all the arrows denoted by $\prul,\prur$ are blowups and every rhombus in the diagram is a normalized Cartesian square.
\end{theorem}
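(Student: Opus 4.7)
The plan is to proceed by induction on the criticality $r$, after first reducing to the case $\CX = \CX_{0,r}$. This reduction follows from the birational invariance of the normalized Chow quotient: by Section \ref{sec:prun}, the variety $X_{0,r}$ is obtained from $X$ by blowing up its sink and source, and in particular the $\C^*$-equivariant birational map $X_{0,r} \dashrightarrow X$ is an isomorphism on the dense open subset where general $\C^*$-orbits live. Since the normalized Chow quotient depends only on the closure of the locus of general orbits in the Chow variety, one obtains $\CX \cong \CX_{0,r}$.

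The base case $r=2$ consists of a single rhombus with apex $\CX_{0,2}$ dominating $\GX_{0,1}$ and $\GX_{1,2}$, which is the classical Thaddeus wall-crossing picture: under the equalization hypothesis and the codimension condition on inner fixed components, the two downward arrows are blowups along smooth centers, and $\CX_{0,2}$ is the normalization of the fiber product $\GX_{0,1}\times_{\GX_{1,1}}\GX_{1,2}$ (cf.\ \cite{Thaddeus1996}).

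For the inductive step, the prunings $X_{0,r-1}$ and $X_{1,r}$ have criticality $r-1$. By induction their Chow quotients $\CX_{0,r-1}$ and $\CX_{1,r}$ sit atop sub-pyramids of the desired shape; these two sub-pyramids overlap precisely in the sub-pyramid apexed by $\CX_{1,r-1}$, and together they account for every node of the big pyramid except the apex $\CX_{0,r}$ itself. I would then construct $\CX_{0,r}$ as the normalization of the fiber product $\CX_{0,r-1} \times_{\CX_{1,r-1}} \CX_{1,r}$, and identify it with the normalized Chow quotient of $X_{0,r}$ by means of the universal property of the latter as the normalized limit of the direct system of GIT quotients \cite[Corollary 2.6]{BHK}: the normalized fiber product is the minimal normal variety dominating all GIT quotients $\GX_{i,i+1}$ for $0\le i< r$, since $\CX_{0,r-1}$ dominates those with $0\le i<r-1$ and $\CX_{1,r}$ dominates those with $1\le i< r$, and they agree on the common range. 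This simultaneously produces the projections $\prul,\prur$ at the apex and the normalized Cartesian property of the top rhombus, and the analogous property for every lower rhombus is inherited from the inductive construction.

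The main obstacle is to verify that $\prul$ and $\prur$ are blowups along smooth centers, rather than merely birational contractions. The equalization hypothesis together with $\codim(B^\pm(Y),X)>1$ for every inner fixed component ensures, via Section \ref{sec:prun}, that all prunings $X_{i,j}$ are smooth and that their mutual birational transformations are compositions of flips, so no divisorial contraction enters the tower. Using this I would identify the center of $\prul: \CX_{0,r}\to \CX_{0,r-1}$ as the smooth subvariety of $\CX_{0,r-1}$ parametrizing those invariant $1$-cycles whose orbit closures meet the source fixed component $Y_r$, with a symmetric description for $\prur$; the same argument then handles every rhombus in the pyramid via the inductive identification of $\CX_{i,j}$ as normalized fiber product of its upper neighbours. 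The most delicate step is establishing the smoothness of these centers, which amounts to a local analysis of the Bia{\l}ynicki-Birula stratification of each pruning near its extremal fixed components, together with the normality of the fiber products.
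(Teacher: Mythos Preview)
Your overall architecture --- reduce to $\CX\simeq\CX_{0,r}$, handle $r=2$ directly, and build the pyramid inductively by normalized fiber products --- is close in spirit to what the paper does, but there are two genuine problems.

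\textbf{The centers are not smooth in general.} You write that ``the main obstacle is to verify that $\prul$ and $\prur$ are blowups along smooth centers'' and that ``the most delicate step is establishing the smoothness of these centers''. This is not what the theorem asserts, and it is false: the paper only claims the maps are (normalizations of) blowups, and Example~\ref{ex:BRUS} exhibits an equalized action on a smooth $X$ satisfying all the hypotheses of Theorem~\ref{thm:main} for which $\CX$ is singular, precisely because one of the blowup centers is reducible (two $\P^1$'s meeting at a point). Smoothness of the centers is only obtained under the additional convexity hypothesis of Theorem~\ref{thm:convex}. So the ``most delicate step'' you propose to carry out is impossible as stated; you should instead aim only for the universal property of blowup (inverse image of center is Cartier) combined with the fiber-product structure, as in the paper's Proposition~\ref{prop:blowup}.

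\textbf{The description of the centers is off.} You identify the center of $\prul:\CX_{0,r}\to\CX_{0,r-1}$ with cycles ``whose orbit closures meet the source fixed component $Y_r$''. But $\CX_{0,r-1}$ parametrizes invariant cycles in the pruning $X_{0,r-1}$, whose source is the divisor $\GX_{r-2,r-1}$, not $Y_r$; every such cycle meets the source. The correct center (Remark~\ref{rem:center}) is the locus $S_{0,r-1}$ of cycles in $X_{0,r-1}$ passing through an inner fixed-point component of weight $a_{r-1}$, i.e.\ the preimage of $\overline{Y}_{r-1}\subset\GX_{r-1,r-1}$.

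\textbf{Comparison with the paper's route.} The paper does not invoke the universal property of \cite{BHK} to identify fiber products with Chow quotients. Instead it first \emph{constructs} the morphisms $\CX\to\CX_{i,j}$ directly (Lemma~\ref{lem:projC}), by replacing $L$ with a multiple so that all pairs $(X_{i,j},L_{i,j})$ are CPN, embedding each $\CX_{i,j}$ in a Segre variety $\Sigma(V_{i,j})$ via Corollary~\ref{cor:chowgrass}, and using the linear projections $\Sigma(V)\to\Sigma(V_{i,j})$. Then, rather than your top-rhombus fiber product $\CX_{0,r-1}\times_{\CX_{1,r-1}}\CX_{1,r}$, it proves (Lemma~\ref{lem:blowup2}) a ``corner'' fiber product $\CX_{0,r}\simeq(\CX_{0,2}\times_{\GX_{1,2}}\CX_{1,r})^{\mathrm{norm}}$ by a direct bijectivity argument on fibers of $d$. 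This immediately reduces the blowup property at the apex to the $r=2$ case (Lemma~\ref{lem:blowup1}) via the compatibility of blowups with fiber products, and all other rhombuses follow by applying the same to the prunings $X_{i,j}$.
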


In particular, the whole diagram is constructed upon its first two  layers (that contain the GIT quotients of $X$) by successive normalized Cartesian squares. In particular, this result implies that every chain of closures of orbits linking the sink and the source of $X$ is parametrized by an element of $\CX$ (Corollary \ref{cor:CXcontainsall}). Moreover, our proof of the statement will provide an explicit description of the centers of all the blowup maps in the diagram (Remark \ref{rem:center}). 

We also present an example that shows that $\CX$ can be singular (Example \ref{ex:BRUS}), and show that it is smooth if $X$ is convex, i.e. if $\HH^1(\P^1,\mu^*TX)=0$ for every morphism $\mu:\P^1\to X$. More precisely, we prove the following:
\begin{theorem}\label{thm:convex}
Under the assumptions of Theorem \ref{thm:main}, assume moreover that $X$ is convex. Then the varieties $\CX_{i,j}$ with $i=0$ or $j=r$ are smooth, and the maps 
\[\xymatrix{\GX_{0,1}&\CX_{0,2}\ar[l]_(0.44)s&\dots\ar[l]_(0.42)s&\CX\ar[l]_(0.42)s\ar[r]^(0.44)d&\dots\ar[r]^(0.40)d&\CX_{r-2,r}
\ar[r]^(0.46)d&\GX_{r-1,r}}\]
are smooth blowups, that is, we have a strong factorization of $\psi$.
\end{theorem}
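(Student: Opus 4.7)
The plan is to argue by induction along the two arms of the diagram in Theorem \ref{thm:main}: the chain $\GX_{0,1} \leftarrow \CX_{0,2} \leftarrow \cdots \leftarrow \CX_{0,r} = \CX$ and, symmetrically, its counterpart ending at $\GX_{r-1,r}$. The base case is smoothness of the extremal GIT quotients $\GX_{0,1}$ and $\GX_{r-1,r}$: under the equalization hypothesis, the $\C^*$-action on the corresponding semistable locus is free away from the fixed components of $X$, so each extremal quotient is the quotient of a smooth variety by a free $\C^*$-action and hence is smooth. The inductive step amounts to showing that each vertex on an arm is smooth and that each arrow $\prul$ (resp.\ $\prur$) is a blowup along a smooth center.

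The key input from convexity is the classical theorem of Fulton--Pandharipande: if $X$ is convex, then every Kontsevich moduli space $\overline{M}_{0,n}(X,\beta)$ is smooth of the expected dimension, and consequently its $\C^*$-fixed locus is smooth. One would then identify $\CX$ with a union of irreducible components of such a fixed locus via the natural map sending a cycle parametrized by $\CX$ to its associated stable map --- the normalization of a chain of $\C^*$-invariant rational curves connecting the sink to the source. Equalization ensures that these chains are reduced, so the identification is an isomorphism onto its image and $\CX$ is smooth. The same argument, applied to moduli of \emph{partial} chains of invariant rational curves (chains ending, resp.\ starting, at the $j$-th resp.\ $i$-th fixed component of $X$), shows that each $\CX_{0,j}$ and each $\CX_{i,r}$ appearing on the two arms is smooth.

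With all vertices known to be smooth, the smoothness of the blowup centers follows from the same circle of ideas: the center of $\prul\colon \CX_{0,j} \to \CX_{0,j-1}$ described in Remark \ref{rem:center} is itself identifiable with a component of a torus fixed locus in a smooth Kontsevich moduli of shorter chains, hence smooth; likewise for $\prur$ on the other arm. Combined with the inductive step, this gives the strong factorization of $\psi$ claimed in the statement.

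The main obstacle I anticipate is the precise identification of the intermediate Chow quotients $\CX_{0,j}$, $\CX_{i,r}$ (and of the blowup centers) with components of torus fixed loci in smooth Kontsevich spaces. For $\CX$ itself this is very natural, but for the intermediate quotients one must either verify that the prunings $X_{i,j}$ inherit convexity --- which is not obviously true, since they are nontrivial birational modifications of $X$ --- or rephrase $\CX_{i,j}$ directly as a moduli of partial chains inside $X$ and invoke convexity of $X$ alone. Matching this partial-chain interpretation with the explicit description of the blowup centers in Remark \ref{rem:center}, so that the incidence relations coming from the normalized Cartesian squares in diagram \eqref{fig:Chow0} translate correctly on the Kontsevich side, is where most of the technical work should be concentrated.
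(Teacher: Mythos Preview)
Your two-step outline --- first smoothness of the $\CX_{0,j}$ and $\CX_{i,r}$, then smoothness of the blowup maps --- matches the paper's structure (Propositions \ref{prop:smoothChows} and \ref{prop:resolution}), but the execution diverges at both steps, and the obstacle you flag at the end is exactly where your plan is incomplete.

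For the smoothness of $\CX_{0,j}$, the paper does \emph{not} attempt to show that the prunings $X_{0,j}$ are convex (they generally are not), nor does it set up a separate Kontsevich moduli problem for partial chains. Instead it replaces $X_{0,j}$ by the partial blowdown $X'_{0,j}$ obtained by contracting the exceptional divisor over $Y_0$ back to $Y_0$; this variety has sink $Y_0$ and a \emph{divisorial} source. Any $\C^*$-invariant cycle $C$ in $X'_{0,j}$ then splits as $C_1\cup C_2$, where $C_2$ is the single irreducible component meeting the source and $C_1$ lies entirely in the open set where $X'_{0,j}$ is isomorphic to $X$. Convexity of $X$ gives $\HH^1(C_1,T_{X'}|_{C_1}(-p))=0$ via \cite[Lemma~10]{FP}, while $\HH^1(C_2,T_{X'}|_{C_2})=0$ follows from a direct weight computation at the divisorial end (\cite[Lemma~2.16]{WORS1}), since all summands of $T_{X'}|_{C_2}$ have degree at least $-1$. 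This yields $\HH^1(C,T_{X'}|_C)=0$, hence smoothness of $\Hilb(X')$ at $[C]$, and $\CX'_{0,j}\simeq\CX_{0,j}$ is then identified with a fixed component of this Hilbert scheme (Remark \ref{rem:Hilb}). Your Kontsevich-space idea is in the same spirit, but the decomposition trick via $X'_{0,j}$ is what actually sidesteps the non-convexity of the prunings; without it, your argument does not close.

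For the second step, the paper avoids entirely the direct verification of smoothness of the blowup centers that you propose. Once the $\CX_{0,j}$ are known to be smooth, the maps $s$ and $d$ between them are already normalizations of blowups (Proposition \ref{prop:blowup}) whose exceptional loci are divisors with irreducible connected components and constant fiber dimension (from the Cartesian-square description in Lemma \ref{lem:blowup2}). Then \cite[Corollary~4.11]{AWDuke} applies and forces these to be blowups along smooth centers. This is both shorter and more robust than trying to identify the centers of Remark \ref{rem:center} with torus-fixed loci in auxiliary moduli spaces.
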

The smoothness of the intermediate Chow quotients in the above factorization follows from using Theorem \ref{thm:main} together with some ideas from \cite{FP,MMW}, which allows us to prove the unobstructedness of deformations of the cycles parametrized by the elements of $\CX$ under the convexity assumption (see also Remark \ref{rem:convorbits}). The smoothness of the centers of the blowups follows then by a statement due to Andreatta and Wi\'sniewski \cite[Corollary~4.11]{AWDuke}.


\subsection*{Outline} The structure of the paper is the following. The first two sections contain some preliminary material on $\C^*$-actions and prunings, in which we essentially recall some useful results and notations presented in \cite{WORS3, BRUS}. Note that the main result of Section \ref{sec:prun} (Proposition \ref{prop:WORS3}) is presented under slightly different hypotheses than in the original source \cite{WORS3}. Section \ref{sec:Chow} is devoted to the Chow quotient of a $\C^*$-action; a key result that we prove is that under the equalization hypothesis, the Chow quotient is the normalization of a subscheme of the Hilbert scheme of $X$ (Remark \ref{rem:Hilb}). Furthermore, we show how to effectively write $\CX$ as the normalization of a certain subvariety of a Grassmannian (Corollary \ref{cor:chowgrass}). The proofs of Theorems \ref{thm:main} and \ref{thm:convex} are presented in Sections \ref{sec:main} and \ref{sec:smooth}, respectively; the latter contains also a toric example whose Chow quotient is not smooth. We conclude the paper with Section \ref{sec:final}, in which we discuss the assumption on the codimension of $B^\pm(Y)$ for every inner fixed point component $Y\in\cY^\circ$, and show that Theorem \ref{thm:main} works under milder hypotheses. As an example, we consider the Chow quotient of the diagonal $\C^*$-action of $(\P^1)^n$, which is known to be isomorphic to the Losev--Manin space (see \cite{DRS} and references therein); our techniques allow us to describe the contractions of this variety appearing in Figure \ref{fig:Chow0}.


\section{Preliminaries on $\C^*$-actions and GIT quotients}\label{sec:prelim}

Along this section $X$ will denote a normal complex projective variety, with a nontrivial $\C^*$-action. 


\subsection{Fixed point components}\label{sssec:fxd} We denote by $X^{\C^*}$  the fixed locus of the action, and by $\cY$ the set of irreducible fixed point components: 
$$X^{\C^*}=\bigsqcup_{Y\in \cY}Y.$$ 
 The {\em sink} and the {\em source} of the action are the fixed point components containing, respectively, the limiting points of  a general $x \in X$:
$$
\lim_{t\to 0}t^{-1}x, \quad \lim_{t\to 0}tx.
$$
 If $X$ is smooth and projective, every $Y\in\cY$ is smooth (cf. \cite{IVERSEN}).


\subsection{Linearizations and weight maps}\label{sssec:lin} Given  a line bundle $L\in \Pic(X)$, one may find a linearization of the $\C^*$-action on it (cf. \cite{KKLV}), so that for every $Y\in \cY$, $\C^*$ acts on $L_{|Y}$ by multiplication with a character $m\in \Mo(\C^*)=\Hom(\C^*,\C^*)$, called {\em weight of the linearization on $Y$}. Fixing an isomorphism $\Mo(\C^*)\simeq \Z$, a linearization defines a map $\mu_L:\cY\to\Z$, such that 
$$
\mu_{kL}(Y)=k\mu_L(Y),\quad \mu_{L+L'}(Y)=\mu_{L}(Y)+\mu_{L'}(Y),
$$
for every $L,L'\in\Pic(X)$, $k\in \Z$, $Y\in \cY$.


\subsection{Actions on polarized pairs}\label{sssec:polar} A {\em $\C^*$-action on the polarized pair $(X,L)$}
is a $\C^*$-action on a projective variety $X$, together with the weight map $\mu_L$ determined by an ample line bundle $L$.  Since two linearizations differ by a character of $\C^*$, for $L \in \Pic(X)$ there exists a unique linearization (called {\em normalized})  whose smaller weight is equal to zero.
In this case we denote by 
$$
0=a_0<\dots<a_r,$$ 
the weights $\mu_L(Y)$, $Y\in\cY$,  
and set: 
$$
Y_i:=\bigcup_{\mu_L(Y)=a_i}Y.
$$ 
The values $a_i$ will be called the {\em critical values} and the number $r$ will be called the {\em criticality} of the action.
The minimum and maximum of the critical values are achieved only at the  sink and the  source of the action, which are then equal to $Y_0$ and $Y_r$, respectively (see \cite[Remark 2.12]{WORS1}). The value $\delta=a_r$ is called {\em bandwidth} of the $\C^*$-action on $(X,L)$. The fixed point components different from $Y_0$, $Y_r$ are called {\em inner}, and their set is denoted by $$\cY^\circ :=\cY\setminus\{Y_0,Y_r\}.$$


\subsection{Geometric quotients}\label{sssec:quotients} It is a well known fact in Mumford's Geometric Invariant Theory that a $\C^*$-action on a polarized variety $(X,L)$  allows to define projective quotients of $X$, as follows. Let 
\[A:=\bigoplus_{m\geq 0}\HH^0(X,mL)\] 
be the {\em graded $\C$-algebra of sections of $L$}, and set, for every $\tau \in [0,\delta]\cap \Q$:
 \[A(\tau):=\bigoplus_{\substack{m\geq 0\\m\tau\in\Z}}\HH^0(X,mL)_{m\tau},\qquad \GX(\tau):=\Proj(A(\tau)).
\]
We will call the  $\GX(\tau)$'s the {\em GIT-quotients of the $\C^*$-action on $(X,L)$}.

The above construction has been extended by Bia{\l}ynicki-Birula and Swi\c{e}cicka to the construction of other (in general non projective) quotients of $X$, described in terms of semi-sections and sections of the $\C^*$-action (cf. \cite{BBS}; see also \cite{WORS3}). Their construction allows us to describe easily the open subsets $X^{\ss}(\tau)\subset X$ of semistable points on which the rational maps
$$
X\dashrightarrow \GX(\tau)
$$
are defined. 
In order to do so we need the language of {\em BB-cells}, provided by the {\em Bia{\l}ynicki-Birula decomposition} of $X$ (see \cite{BB,CARRELL}): given a fixed point component $Y\in \cY$, we denote by 
\begin{equation}\label{eq:BBcells}
X^\pm(Y):=\displaystyle\{x\in X|\,\, \lim_{t^{\pm 1}\to 0} tx\in Y\},\qquad 
B^\pm(Y):=\overline{X^\pm(Y)},
\end{equation}
the {\it BB-cells} of the action and their closures. The equalities:
\[X=\bigsqcup_{Y\in \cY}X^+(Y)=\bigsqcup_{Y\in \cY}X^-(Y)
\]
are called the {\em $+$} and {\em $-$ BB-decomposition} of $X$. 

To describe $X^{\ss}(\tau)$ for a $\C^*$-action on a polarized pair $(X,L)$, let us define
$$B^\pm_i:=\bigcup_{\mu_L(Y)=a_i}B^\pm(Y)$$
and, for notational reasons, $B^\pm_i=\emptyset$ for $i\in \Z\setminus\{0,\dots,r\}$. Then,
following \cite{BBS}, we may write:
\[
X^{\ss}(\tau)=X\setminus \big(\bigcup_{a_i<\tau}B^+_i\cup \bigcup_{a_i>\tau}B^{-}_i\big).
\]
In particular, $X^{\ss}(\tau)$ is the same for any  $\tau\in (a_i,a_{i+1})\cap \Q$, and so we set:
\begin{itemize}[itemsep=5pt,topsep=5pt]
\item $\GX_{i,i}:=\GX(\tau)$ for $\tau=a_i$; 
\item $\GX_{i,i+1}:=\GX(\tau)$ for $\tau\in(a_i,a_{i+1})\cap \Q$.
\end{itemize}
Note that $\GX_{i,i+1}$ is a geometric quotient of $X$ for every $i$, whereas $\GX_{i,i}$ is only semigeometric.  
Moreover, by construction, $$\GX_{0,0}=Y_0,\quad \GX_{r,r}=Y_r.$$ 
Furthermore, since the intersection of all the open sets $X^{\ss}(i,i+1)$ is nonempty, the varieties $\GX(i,i+1)$ are birationally equivalent. The natural birational maps among them fit in the following commutative diagram, whose diagonal arrows are contractions:
\begin{equation}
\begin{tikzcd}[
  column sep={3.4em,between origins},
  row sep={3.5em,between origins},
]
&\GX_{0,1} \arrow[rr,dashed] \arrow[rd] \arrow[dl]&&\GX_{1,2} \arrow[dl]&\dots\ \ &\GX_{r-2,r-1} \arrow[rr,dashed] \arrow[rd] &&\GX_{r-1,r}\arrow[rd] \arrow[dl]&\\
\GX_{0,0}&&\GX_{1,1}&\dots&&\dots\ \ &\GX_{r-1,r-1}&&\GX_{r,r}
\end{tikzcd}
\label{eq:GITquot}
\end{equation}
The horizontal maps are Atiyah flips; this is a known fact which goes back to the work of Thaddeus, Reid and W{\l}odarczyk (\cite{Wlodarczyk2009,Thaddeus1996,ReidFlip}).

\begin{example}\label{ex:toric}
An obvious source of examples of $\C^*$-actions is the category of toric varieties. An algebraic variety $X$ is called {\em toric} if it admits an action of an algebraic torus $\mathbb{T}=(\mathbb{C}^*)^n$ with an open orbit. As usual, we denote by $M=\Mo(\TT):=\Hom_{\alg}(\mathbb{T},\C^*)$ and $N:=\Hom_{\alg}(\C^*,\mathbb{T})$ the lattices of characters and 1-parameter subgroups of $\mathbb{T}$. If $X$ is normal, it is completely described by a regular fan $\Sigma$ in $N\otimes\mathbb{R}$.  
If $X$ is projective and we choose an ample line bundle $L$ on $X$, together with a linearization of the action of $\mathbb{T}$, then the weights of the torus action on the space of sections of $L$ are lattice points in a lattice polytope $\Delta\subset M\otimes\mathbb{R}$; the lattice polytope determines the fan $\Sigma$, hence the variety $X$, so it makes sense to write $X=X(\Delta)$. The vertices of $\Delta$ represent the weights of the action of $\mathbb{T}$ on fibers of $L$ over fixed points of the action on $X$. 
  
Actions of $\C^*$ on a projective toric variety $X=X(\Delta)$ as above can be  obtained by considering 1-parameter subgroups $\nu\in N$, which are determined by projections $\nu: M\otimes\mathbb{R}\rightarrow\mathbb{Z}\otimes\mathbb{R}$. The critical values of the corresponding $\C^*$-action (together with the induced linearization on $L$) are the images of vertices of $\Delta$ under the projection $\nu$. The associated GIT quotients of the action on $(X,L)$ are then toric varieties, with respect to the quotient torus $T/\nu\otimes\mathbb{C}^*$. They can be described as the toric varieties determined by the polytopes $\nu^{-1}(a)\cap\Delta$, $a\in\mathbb{Q}\cap\nu(\Delta)$, obtained by slicing $\Delta$ with hyperplanes of the form $\nu^{-1}(a)$. We refer to \cite{KSZ} for details.
\end{example}


\subsection{The smooth case}\label{sssec:BBsmooth}
If $X$ is smooth, the normal bundle in $X$ of a fixed point component $Y$  splits into two subbundles, on which $\C^*$ acts with positive and negative weights, respectively:
\begin{equation}
\cN_{Y|X}\simeq \cN^+(Y)\oplus \cN^-(Y).
\label{eq:normal+-}
\end{equation}
We set
\begin{equation}\label{eq:Vpm}
\nu^{\pm}(Y):=\rk (\cN^{\pm}(Y)), \qquad 
V^\pm(Y):=\P(\cN^{\pm}(Y)^\vee).
\end{equation}


\subsection{Equalized actions}\label{sssec:equal}

We say that a $\C^*$-action  is {\em  equalized} at $Y\in\cY$ if for every $x\in \big(X^{-}(Y)\cup X^{+}(Y)\big)\setminus Y$ the isotropy group of the action at $x$ is trivial. In the smooth case,  as shown in \cite[Lemma 2.1]{WORS3}, one may ask, equivalently, that for
every  fixed point component 
$Y$, $\C^*$ acts on $\cN^+(Y)$ with all the weights equal to $+1$ and on $\cN^-(Y)$ with all the weights equal to $-1$ (\cite[Definition 1]{RW}). 

\begin{remark}\label{rem:normalquot}
If $X$ is smooth and the $\C^*$-action is equalized, the geometric quotients $\GX_{i,i+1}$ are smooth, as shown in  \cite[Lemma 2.15]{WORS3}. Moreover, under these hypotheses, if $Y$ is a fixed point component of weight $\mu_L(Y)=a_i$, then  $V^+(Y)$ (respectively $V^-(Y)$) can be embedded into $\GX_{i-1,i}$  (resp. $\GX_{i,i+1}$) as the parameter space of orbits with source (resp. sink) at $Y$.
\end{remark}

The equalization hypothesis implies that the closure of any $1$-dim\-ens\-ional orbit is a smooth rational curve, whose $L$-degree may be computed in terms of the weights at its extremal points. 
The following statement (\cite[Lemma 2.2]{WORS3}) follows from the AM vs. FM formula presented in \cite[Corollary 3.2]{RW}: 

\begin{lemma}[AM vs. FM]\label{lem:AMFM}
Let $(X,L)$ be a polarized pair, with $X$ smooth, supporting an equalized $\C^*$-action, and let $C$ be the closure of a $1$-dimensional orbit, whose sink and source are denoted by $x_-$ and $x_+$. Then $C$ is a smooth rational curve of $L$-degree  equal to $\mu_L(x_+)-\mu_L(x_-)$.
\end{lemma}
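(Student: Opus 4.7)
The plan is to construct a $\C^*$-equivariant morphism $\phi:\P^1\to X$ whose image is $C$, and to reduce both assertions to the classification of $\C^*$-equivariant line bundles on $\P^1$. Pick a point $x\in C$ whose $\C^*$-orbit is dense in $C$. Equalization guarantees that the stabilizer of $x$ is trivial, so the orbit morphism $\C^*\to X$, $t\mapsto tx$, is an open immersion onto $\C^*\cdot x$. By the projectivity of $X$ and the valuative criterion, it extends uniquely to a $\C^*$-equivariant morphism $\phi:\P^1\to X$ with $\phi(0)=x_-$, $\phi(\infty)=x_+$, whose image is $C$ and which is birational onto its image.

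For the degree, pull $L$ back along $\phi$: the result is a $\C^*$-equivariant line bundle $\phi^*L$ on $\P^1$, endowed with the standard action $s\cdot[t_0:t_1]=[st_0:t_1]$. Every such equivariant line bundle is isomorphic to $\cO_{\P^1}(d)$ for some $d\geq 0$ with a linearization whose weights $w_0,w_\infty$ at the fixed points $0=[0:1]$, $\infty=[1:0]$ satisfy $w_\infty-w_0=d$; this follows from the fact that the monomial basis $t_0^it_1^{d-i}$ of $\HH^0(\P^1,\cO(d))$ has $\C^*$-weights $0,1,\dots,d$, while $t_1^d$ generates the fiber at $0$ and $t_0^d$ the fiber at $\infty$. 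By $\C^*$-equivariance of $\phi$ we have $w_0=\mu_L(x_-)$ and $w_\infty=\mu_L(x_+)$; since $\phi$ is birational onto $C$, $d=\deg(\phi^*L)=L\cdot C$. Thus $L\cdot C=\mu_L(x_+)-\mu_L(x_-)$.

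It remains to check that $C$ is smooth, equivalently that $\phi$ is a closed embedding. Ampleness of $L$ combined with the degree formula already yields $x_-\neq x_+$, so $\phi$ is set-theoretically injective; it suffices to show that $d\phi$ is injective at $0$ and $\infty$. The tangent space $T_0\P^1$ has $\C^*$-weight $+1$, so $d\phi_0$ maps into the weight-$(+1)$ subspace of $T_{x_-}X$. Equalization (see $\S\ref{sssec:equal}$) identifies this subspace with the fiber at $x_-$ of $\cN^+(Y)$, where $Y\in\cY$ is the component containing $x_-$; since $\phi$ is non-constant, $d\phi_0\neq 0$. The symmetric argument at $\infty$, using weight $-1$ and $\cN^-$, completes the proof.

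The crux of the argument, and its main obstacle, is precisely this last step: without equalization the orbit closure could acquire cuspidal singularities at $x_\pm$ coming from ramification of $\phi$ produced by a non-trivial finite isotropy along the orbit. The assumption that $\C^*$ acts with weights $\pm 1$ on $\cN^\pm(Y)$ is exactly what forces $d\phi$ to be non-zero at the two endpoints, so that $\phi$ is unramified there and $C$ is smooth. The degree formula itself is essentially a tautology once $\phi$ is available; it is the smoothness of $C$ at the sink and source that genuinely needs the equalization hypothesis.
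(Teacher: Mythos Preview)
Your argument is essentially correct. The paper does not actually prove this lemma; it merely cites \cite[Lemma~2.2]{WORS3} and the AM vs.\ FM formula \cite[Corollary~3.2]{RW}, so your self-contained account via the equivariant normalization $\phi:\P^1\to C$ is more explicit than what appears here, and is in fact the standard way one unwinds that formula.

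Two minor points to clean up. First, with the paper's conventions (sink contains $\lim_{t\to 0}t^{-1}x$, source contains $\lim_{t\to 0}tx$; see \S\ref{sssec:fxd}), the orbit map $t\mapsto tx$ extended to $\P^1$ satisfies $\phi(0)=x_+$ and $\phi(\infty)=x_-$, the opposite of what you write; a parallel sign slip in your weight computation on $\cO_{\P^1}(d)$ cancels this and you land on the correct formula, but it would be cleaner to align with \S\ref{sssec:fxd}. Second, the inference ``$\phi$ non-constant $\Rightarrow d\phi_0\neq 0$'' is false in general (e.g.\ $u\mapsto (u^2,u^3)$); what makes it true here, as you correctly explain in your closing paragraph, is that equalization forces $\phi$ to be \emph{linear} in $\C^*$-equivariant local coordinates at the endpoint (weights in $\{0,\pm 1\}$ leave no room for higher-order terms), and it is this linearity that yields $d\phi_0\neq 0$. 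Making that dependence explicit in the body of the proof, rather than appealing to non-constancy alone, would close the only gap.
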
 

\subsection{B-type actions and bordisms}\label{sssec:btype} 
Following \cite[Section 3]{WORS1}, we call  {\em B-type action} a $\C^*$-action whose extremal fixed point components $Y_0,Y_r$ have codimension one. 

If $X$ is smooth and projective, by the Bia{\l}ynicki-Birula decomposition, the restriction maps $\Pic(X)\to \Pic(Y_j)$, $j=0,r$ are surjective (cf. \cite[Theorem~3]{CS79}). We will say that the $\C^*$-action is a {\em bordism} (cf. \cite[Definition~3.8]{WORS1}) if the restriction maps $\imath_j^*:\Pic(X)\to \Pic(Y_j)$, $j=0,r$, fit into two short exact sequences:
$$
\begin{array}{c}
\shse{\Z[Y_r]}{\Pic(X)}{\Pic(Y_0)}
\\[2pt]
\shse{\Z[Y_0]}{\Pic(X)}{\Pic(Y_r)}
\end{array}
$$

The following equivalence, which is a consequence of \cite[Theorem~3]{CS79} has been proved in \cite[Corollary~3.7]{WORS1}.
\begin{lemma}\label{lem:bordism} Let $X$ be a smooth projective variety admitting a $\C^*$-action of B-type. The action is a bordism if and only if $\nu^\pm(Y)=\codim(B^\pm(Y))\geq 2$ for every inner fixed point component $Y\in\cY^\circ$. 
\end{lemma}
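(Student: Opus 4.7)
\emph{Proof plan.} The argument is essentially a dictionary between the bordism condition on $\Pic(X)$ and the codimensions of the Bia{\l}ynicki--Birula (BB) closures, and relies on the refined BB theorem of Carrell and Sommese \cite[Theorem~3]{CS79}: for a smooth projective $X$ with a $\C^*$-action, the Picard group $\Pic(X)$ decomposes as the direct sum of the BB-retraction pullback of $\Pic(Y_0)$ and the free abelian group on the classes $[B^-(Z)]$, for those $Z\in\cY\setminus\{Y_0\}$ such that $B^-(Z)$ is a prime divisor in $X$; a symmetric statement holds at $Y_r$ for the divisorial $B^+(Z)$ with $Z\neq Y_r$. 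In particular, $\ker(\imath_0^*)$ is freely generated by the divisorial $[B^-(Z)]$'s and $\ker(\imath_r^*)$ by the divisorial $[B^+(Z)]$'s.

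Next I would identify which $B^\pm(Z)$ are actually divisors. Since $X^\pm(Z)$ is a Zariski-locally trivial affine bundle over $Z$, a dimension count gives $\codim_X B^\pm(Z)=\nu^\pm(Z)$. In the B-type case $\cN^-(Y_r)=0$ forces $B^-(Y_r)=Y_r$, which is a divisor precisely because $\codim Y_r=1$; symmetrically $B^+(Y_0)=Y_0$ is a divisor. Therefore the hypothesis $\nu^\pm(Y)=\codim B^\pm(Y)\geq 2$ for every inner $Y\in\cY^\circ$ is equivalent to asserting that $Y_r$ is the unique divisor among the $B^-(Z)$'s with $Z\neq Y_0$, and symmetrically $Y_0$ is the unique divisor among the $B^+(Z)$'s with $Z\neq Y_r$.

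With this dictionary in hand, both implications follow at once. If $\nu^\pm(Y)\geq 2$ for every inner $Y$, then Carrell--Sommese yields $\ker(\imath_0^*)=\Z[Y_r]$ and $\ker(\imath_r^*)=\Z[Y_0]$, giving the two short exact sequences that define a bordism. Conversely, if both sequences hold, then $\ker(\imath_0^*)$ is of rank one and generated by $[Y_r]$; in the Carrell--Sommese basis this forces every other $B^-(Z)$ to have codimension at least two, i.e.\ $\nu^+(Y)\geq 2$ for every inner $Y$, and the symmetric argument on $\imath_r^*$ produces $\nu^-(Y)\geq 2$.

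The main point I have to invoke (rather than reprove) is the freeness of the Carrell--Sommese presentation of $\Pic(X)$, which strengthens the bare surjectivity of $\imath_0^*$ and $\imath_r^*$ recalled just before the lemma; this is what ensures the linear independence of the divisorial $[B^-(Z)]$'s in $\Pic(X)$ and their independence from the pullback of $\Pic(Y_0)$. It follows from the affine bundle structure of the big BB cell over $Y_0$ and the fact that its complement is stratified by BB cells of strictly positive codimension, so that its divisorial components contribute independent classes to $\Pic(X)$.
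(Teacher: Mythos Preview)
Your argument is correct and is exactly the approach the paper has in mind: the lemma is stated there without proof, as a consequence of \cite[Theorem~3]{CS79} proved in \cite[Corollary~3.7]{WORS1}, and your proposal reconstructs precisely that deduction from the Carrell--Sommese description of $\ker(\imath_0^*)$ and $\ker(\imath_r^*)$. One cosmetic slip: the dimension count actually gives $\codim_X B^\pm(Z)=\nu^\mp(Z)$ (since $T_{X^+(Z)}|_Z=T_Z\oplus\cN^+(Z)$), and indeed you tacitly use this correct version when you infer $\nu^+(Y)\geq 2$ from the $B^-$-side; the swap---which also appears in the lemma's displayed equality---is harmless here because both $\pm$ conditions are imposed simultaneously.
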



\subsection{A digression on Grassmannians and $\C^*$-actions}\label{ssec:grass}
 
In this section we discuss an example that we will use later on. 

Let $V$ be a finite dimensional complex vector space with a nontrivial linear action of $\C^*$ with weights $a_0<\dots<a_s\in\Z$, inducing a decomposition:
$$
V=\bigoplus_{k=0}^s V_k,\qquad V_k=\{v\in V|\,\, t(v)=t^{a_k}v\,\,\mbox{for all }t\in\C^*\}.
$$
The induced action on the (Grothendieck) projectivization $\P(V)$ of $V$ has exactly $s+1$ fixed-point components $\P(V_k)\subset \P(V)$. The sink and the source of the action are $\P(V_0),\P(V_s)$, and there exists a linearization of the action on the tautological line bundle $\cO(1)$ such that:
$$
\mu_{\cO(1)}(\P(V_k))=a_k.
$$ 
Let us denote by $\G(s,\P(V))$ the Grassmannian of $s$-dimensional projective subspaces of $\P(V)$ and consider the induced $\C^*$-action. One of its fixed point components is the variety parametrizing subspaces meeting each $\P(V_k)$ in one point, which is isomorphic to the Segre variety $\prod_{k=0}^s\P(V_k)$; we will denote it by  
\begin{equation}\label{eq:Sigma}\Sigma(V):=\{\P(W)\subset \P(V)|\,\, \dim(\P(W)\cap \P(V_k))=0,\,\, \forall k\}\subset \G(s,\P(V))\end{equation}
Now let us consider two given indices $k_-,k_+$ with $0\leq k_-<k_+\leq s$, and set:
$$
V_{k_-,k_+}:=\bigoplus_{k=k_-}^{k_+} V_k.
$$
We have induced $\C^*$-actions on $V_{k_-,k_+}$ (with weights $a_{k_-}<\dots<a_{k_+}$), $\P(V_{k_-,k_+})$ and the Grassmannian $\G(k_+-k_-,\P(V_{k_-,k_+}))$. One of the fixed-point components of the action on $\G(k_+-k_-,\P(V_{k_-,k_+}))$ is $\Sigma(V_{k_-,k_+})$, which is isomorphic to $\prod_{k=k_-}^{k_+}\P(V_k)$; note also that the restriction of the Pl\"ucker embedding of the Grassmannian  $\G(k_+-k_-,\P(V_{k_-,k_+}))$ to $\Sigma(V_{k_-,k_+})$ is the Segre embedding.

The linear projection $\varphi_{k_-,k_+}:\P(V)\dashrightarrow \P(V_{k_-,k_+})$ is $\C^*$-equivariant and induces a surjective morphism 
$$
\pi_{k_-,k_+}:\Sigma(V)\lra \Sigma(V_{k_-,k_+}),
$$ 
which corresponds to the natural projection $$\prod_{k=0}^s\P(V_k)\lra\prod_{k={k_-}}^{k_+}\P(V_k),\qquad (P_0,\dots,P_s)\mapsto (P_{k_-},\dots,P_{k_+}).$$


\section{Prunings}\label{sec:prun}

Following \cite{WORS3}, the GIT quotients of $X$ can be seen as the extremal fixed point components of $\C^*$-equivariant birational modifications of $X$, that are referred to as {\em prunings} of $X$ in \cite{BRUS}.

\begin{definition}\label{def:pruning}
Let $(X,L)$ be a polarized pair, and let $\tau_-<\tau_+$ be two rational numbers in $[0,\delta]$. We define the {\em pruning} of $(X,L)$ with respect to $\tau_-,\tau_+$ as: 
	$$
	X(\tau_-,\tau_+):=\Proj A(\tau_-,\tau_+),\ \mbox{where}\ A(\tau_-,\tau_+)=\bigoplus_{m\in d\Z_{\geq 0}}\bigoplus_{k \in[m\tau_-, m\tau_+]}\HH^0(X,mL)_k,
	$$	
and $d$ is the minimum positive integer such that $d\tau_\pm\in \Z$.
\end{definition}
	
\begin{remark}\label{rem:pruning} Note that $A(\tau_-,\tau_+)$ is finitely generated by \cite[Lemma 3.2]{BRUS}, guaranteeing the good definition of $X(\tau_-,\tau_+)$.  
\end{remark}

\begin{example}\label{ex:toric1}
Assume that $X=X(\Delta)$ is a projective toric variety, polarized with an ample line bundle $L$, associated with a lattice polytope $\Delta\subset M\otimes \R$,  with a $\C^*$-action determined by a $1$-parameter subgroup $\nu$. Then, with the notation of Example \ref{ex:toric}, the prunings of $X$ are the  toric varieties associated with polytopes of the form $\nu^{-1}[a,b]\cap\Delta$ where $a<b$ are rationals contained in $\nu(\Delta)$. 
\end{example}

In the case in which $X$ is smooth and the action is equalized, the varieties $X(\tau_-,\tau_+)$ can be easily written as birational modifications of $X$, as follows. Let us denote by $\beta:X^\flat\lra X$ the blowup of $X$ along the sink and the source $Y_0$, $Y_r$. The $\C^*$-action on $X$ extends via $\beta$ to an action on $X^\flat$, whose sink and source are the exceptional divisors over $Y_0,Y_r$, that we denote by $Y^\flat_0,Y^\flat_r$. Let us define
\[
L(\tau_-,\tau_+):=\beta^*L-\tau_-Y^\flat_0-(\delta-\tau_+)Y^\flat_r.
\]
Then, following \cite[Lemma 2.5]{WORS3}, we can write:
\[
A(\tau_-,\tau_+)=\bigoplus_{m\in d\Z_{\geq 0}}\HH^0(X^\flat,mL(\tau_-,\tau_+)).\]

\begin{remark}\label{rem:Xflat}
In particular, for $\tau_-$, $\tau_+$ sufficiently close to $0$, $\delta$, the line bundle $L(\tau_-,\tau_+)$ is ample on $X^\flat$, so in this case  $X(\tau_-,\tau_+)=X^\flat$. 
\end{remark}

Furthermore, we can describe the base loci of the line bundles  $mL(\tau_-,\tau_+)$ for $m$ large enough and divisible (cf. \cite[Corollary~2.7]{WORS3}):
\begin{lemma}\label{lem:baselocus}
Let $(X,L)$ be a polarized pair, with $X$ smooth,  with an equalized $\C^*$-action. With the above notation, the base locus of $mL(\tau_-,\tau_+)$ is equal to the strict transform in $X^\flat$ of 
\[
\bigcup_{a_i< \tau_-}B^+_i\cup\bigcup_{a_j> \tau_+}B^-_j.
\]
\end{lemma}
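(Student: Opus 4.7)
The plan is to compute the base locus of $mL(\tau_-,\tau_+)$ on $X^\flat$ point-by-point, using the weight decomposition of its global sections to match the vanishing locus with the claimed strict transform. By the identification
\[\HH^0\big(X^\flat,mL(\tau_-,\tau_+)\big)=\bigoplus_{k\in[m\tau_-,m\tau_+]}\HH^0(X,mL)_k\]
already recalled just before the statement, a point $p\in X^\flat$ fails to be a base point if and only if some weight-$k$ section with $k\in[m\tau_-,m\tau_+]$ does not vanish at $p$.

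For a point $p\in X^\flat\setminus(Y_0^\flat\cup Y_r^\flat)\simeq X\setminus(Y_0\cup Y_r)$, let $\overline{C}\simeq\P^1$ be the closure of the unique $1$-dimensional orbit through $p$, with sink $x_-\in Y_{i_-}$ and source $x_+\in Y_{i_+}$ of weights $a_{i_-}<a_{i_+}$. Lemma~\ref{lem:AMFM} gives $\deg L|_{\overline{C}}=a_{i_+}-a_{i_-}$, and equalization forces $\HH^0(\overline{C},mL|_{\overline{C}})$ to split into one-dimensional $\C^*$-weight spaces of weights $ma_{i_-},ma_{i_-}+1,\dots,ma_{i_+}$, each spanned by a section non-vanishing at $p$. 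For $m$ large and divisible the equivariant restriction $\HH^0(X,mL)_k\to \HH^0(\overline{C},mL|_{\overline{C}})_k$ is surjective, by Serre vanishing on the ideal of $\overline{C}$, so a weight-$k$ section of $mL$ non-vanishing at $p$ exists exactly when $k\in[ma_{i_-},ma_{i_+}]$. Hence $p$ is a base point iff $[m\tau_-,m\tau_+]\cap[ma_{i_-},ma_{i_+}]=\emptyset$, equivalently iff $\tau_+<a_{i_-}$ or $\tau_->a_{i_+}$. Since by construction $p\in X^-(Y_{i_-})\subseteq B^-_{i_-}$ and $p\in X^+(Y_{i_+})\subseteq B^+_{i_+}$, this is precisely the condition $p\in\bigcup_{a_j>\tau_+}B^-_j\cup\bigcup_{a_i<\tau_-}B^+_i$, verifying the claim off the exceptional divisors.

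For $q\in Y_0^\flat$ lying over $y_0\in Y_0$ and corresponding to a tangent direction $[v_0]\in\P(N_{Y_0|X}|_{y_0})$, choose local $\C^*$-equivariant coordinates $(u,v)$ at $y_0$ with $Y_0=\{v=0\}$; equalization forces $\C^*$ to act on the $v$-coordinates with weight $-1$. Under the normalized linearization, a weight-$w$ section $s$ of $mL$ expands locally as $s=\sum_{|\alpha|=w}c_\alpha(u)v^\alpha$, and so vanishes to order exactly $w$ along $Y_0$. Dividing by the $m\tau_-$-th power of a local equation of $Y_0^\flat$ and evaluating at $q$ shows that only sections of weight $w=m\tau_-$ contribute non-trivially, with value $\sum_{|\alpha|=m\tau_-}c_\alpha(y_0)v_0^\alpha$. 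By equalization, $[v_0]$ determines a unique orbit $C_{[v_0]}$ with sink $y_0$ and source of some weight $a^+$, and the displayed value agrees, up to a non-zero scalar, with the weight-$m\tau_-$ coefficient of $s|_{\overline{C}_{[v_0]}}$ in the natural parametrization. Arguing as in the previous paragraph, such a coefficient can be made non-zero precisely when $\tau_-\leq a^+$, so $q$ is a base point iff $[v_0]$ points toward an orbit whose source has weight $a^+<\tau_-$. This is exactly the condition that $q$ lies in the strict transform of $B^+_i$ for some $a_i<\tau_-$; the symmetric analysis at $Y_r^\flat$ produces the $B^-_j$ with $a_j>\tau_+$, and together these finish the proof.

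The main obstacle is the exceptional-divisor analysis, since one must simultaneously isolate the weight-$m\tau_-$ contribution on $X^\flat$, use equalization to identify each direction on $Y_0^\flat$ with a unique $\C^*$-orbit through the sink, and verify that the strict transform of $B^+_i$ intersected with $Y_0^\flat$ is exactly the locus of directions whose orbit ends at a source of weight $a_i$. This is where the smoothness and equalization hypotheses are essential.
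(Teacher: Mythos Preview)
The paper does not supply its own proof of this lemma; it is quoted from \cite[Corollary~2.7]{WORS3}, so there is no in-paper argument to compare against. Your direct orbit-by-orbit approach is the natural one, and the overall strategy is sound: identify sections of $mL(\tau_-,\tau_+)$ with weight pieces $\HH^0(X,mL)_k$, $k\in[m\tau_-,m\tau_+]$, and test nonvanishing via restriction to orbit closures.

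A few points deserve tightening. First, your analysis off the exceptional divisors silently assumes $p$ lies on a $1$-dimensional orbit; inner fixed points must be handled separately (at such $p$ of weight $a_i$, only weight-$ma_i$ sections can be nonzero, and the same conclusion follows). Second, the equivalence ``$p$ is a base point $\Leftrightarrow p\in\bigcup_{a_i<\tau_-}B^+_i\cup\bigcup_{a_j>\tau_+}B^-_j$'' uses the closure relations $B^+_i\subset\bigcup_{a_k\le a_i}X^+(Y_k)$ (and dually for $B^-$), which you invoke implicitly but should state. Third, your appeal to Serre vanishing gives surjectivity of $\HH^0(X,mL)\to\HH^0(\overline{C},mL|_{\overline{C}})$ for a single curve; you need a uniform $m$ valid for all orbit closures, which follows from boundedness of the family (finitely many geometric quotients parametrize them). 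Fourth, at the sink the normal coordinate \emph{functions} have weight $+1$, not $-1$ (the normal \emph{vectors} have weight $-1$ under equalization); the conclusion $|\alpha|=w$ is unaffected, but the sign should be corrected. Finally, in the exceptional-divisor step, the converse direction (strict transform of $B^+_i$ $\Rightarrow$ base point) is cleaner than you make it: any weight-$m\tau_-$ section vanishes identically on $B^+_i$ when $a_i<\tau_-$, hence its transform vanishes on the strict transform by continuity, so no orbit-direction identification is needed there.
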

In particular, $\Bs(mL(\tau_-,\tau_+))$ depends only on:
$$
i(\tau_-):=\min\{i|\,\,a_i\geq\tau_-\}-1,\quad j(\tau_+):=\max\{j|\,\,a_j\leq\tau_+\}+1,
$$
so we may define:
\[
\cB_{i(\tau_-)j(\tau_+)}:=\Bs(mL(\tau_-,\tau_+))=\bigcup_{a_i< \tau_-}B^+_i\cup\bigcup_{a_j> \tau_+}B^-_j,
\]
and assert also that $\cB_{i(\tau_-)j(\tau_+)}$ is equal to the {\em stable base locus of }$mL(\tau_-,\tau_+)$, that we denote by $\B(mL(\tau_-,\tau_+))$.

\begin{notation}\label{not:cones}
Given line bundles $L_1,L_2,\ldots, L_i$ on a projective variety $M$, we will denote by $$\langle L_1,L_2,\dots, L_i\rangle$$ the convex cone of nonnegative real linear combinations of the numerical classes of these line bundles in the space $N^1(M)$ (the real vector space generated by Cartier divisors in $M$ modulo numerical equivalence). 
\end{notation}

We will now consider a $\C^*$-action on a polarized pair $(X,L)$ with $X$ smooth, for which we will use the notation of Section \ref{sec:prelim}. Our goal will be to study the intersection of the movable cone of $X^\flat$ with $\langle \beta^*L,Y^\flat_0,Y^\flat_r\rangle$. For simplicity we will assume that the $\C^*$-action is a bordism, which, by our discussion above, is equivalent to say that $\B(mL(\tau_-,\tau_+))$ contains no codimension one subvarieties, for every $\tau_-,\tau_+$.

\begin{remark}\label{rem:picz}
Note that we are not assuming any particular value of the Picard number of $X$. Our arguments we will be similar to the ones considered in \cite{WORS3}, where we assumed that $\Pic(X)\simeq \Z$, but considered also the non-bordism case. 
\end{remark}
 
\begin{lemma}\label{lem:movcone} Let $(X,L)$ be a polarized pair, with $X$ smooth, endowed with an equalized $\C^*$-action such that the induced action on
$X^\flat$ is a bordism. Then
\[\ol{\Mov(X^\flat)}\cap \langle \beta^* L,Y^\flat_0,Y^\flat_r\rangle= \langle \beta^*L,L(0,0),L(\delta,\delta) \rangle.\]
\end{lemma}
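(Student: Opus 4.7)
The plan is to parameterize the 3-dimensional linear subspace $V\subset N^1(X^\flat)$ spanned by $\beta^*L,Y^\flat_0,Y^\flat_r$ via $D=a\beta^*L+b_0Y^\flat_0+b_rY^\flat_r$. Using $L(0,0)=\beta^*L-\delta Y^\flat_r$ and $L(\delta,\delta)=\beta^*L-\delta Y^\flat_0$, a direct computation shows that inside $V$ the cone $\langle \beta^*L,L(0,0),L(\delta,\delta)\rangle$ is precisely the locus cut out by $b_0\leq 0$, $b_r\leq 0$ and $a\delta+b_0+b_r\geq 0$; equivalently, it is the set of nonnegative multiples of the classes $L(\tau_-,\tau_+)$ with $0\leq\tau_-\leq\tau_+\leq\delta$ (the inequality $a\geq 0$ being automatic).

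For the inclusion $\supseteq$, I would invoke the discussion preceding Remark \ref{rem:picz}: the bordism hypothesis is equivalent to $\B(mL(\tau_-,\tau_+))$ containing no codimension-one subvarieties, for every $\tau_\pm$. Hence each $L(\tau_-,\tau_+)$ lies in $\cMov{X^\flat}$, and in particular so do the three generators $\beta^*L=L(0,\delta)$, $L(0,0)$ and $L(\delta,\delta)$.

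For the opposite inclusion, given movable $D=a\beta^*L+b_0Y^\flat_0+b_rY^\flat_r\in V$, I would test $D$ against three families of curves covering prime divisors of $X^\flat$, using that a generic member of such a family avoids the (codimension $\geq 2$) stable base locus of a movable approximation of $D$ and therefore pairs nonnegatively with $D$. Lines $\ell_0$ in the fibers of $Y^\flat_0=\P(\cN^+(Y_0)^\vee)\to Y_0$ cover $Y^\flat_0$ and satisfy $\beta^*L\cdot \ell_0=0$, $Y^\flat_0\cdot \ell_0=-1$, $Y^\flat_r\cdot \ell_0=0$, so $D\cdot \ell_0=-b_0\geq 0$; symmetric lines $\ell_r$ above $Y_r$ give $-b_r\geq 0$. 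Finally, the strict transforms $\tilde\gamma$ of closures of general $1$-dimensional orbits are, by equalization and Lemma \ref{lem:AMFM}, smooth rational curves of $L$-degree $\delta$ meeting $Y_0$ and $Y_r$ transversally at a single point each, hence $\beta^*L\cdot\tilde\gamma=\delta$ and $Y^\flat_0\cdot\tilde\gamma=Y^\flat_r\cdot\tilde\gamma=1$; since the $\tilde\gamma$'s sweep out $X^\flat$, this yields $D\cdot\tilde\gamma=a\delta+b_0+b_r\geq 0$. These are exactly the defining inequalities of $\langle \beta^*L,L(0,0),L(\delta,\delta)\rangle$.

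The most delicate point is the moving-curve principle underlying the test-curve step: a class in $\cMov{X^\flat}$ must intersect nonnegatively any curve belonging to a family covering some prime divisor, which one justifies via a standard approximation argument moving the stable base locus into codimension two. Everything else reduces to the transversality of orbit closures at the sink and source guaranteed by equalization, together with the bordism version of Lemma \ref{lem:baselocus}.
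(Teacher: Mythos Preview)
Your argument is sound and is presumably what \cite[Proposition~4.7]{WORS3} does; the paper's own proof consists entirely of the observation that the argument there goes through verbatim once the Picard-number-one hypothesis of that reference is replaced by the bordism assumption on $X^\flat$ (both serving only to force $\codim(B^\pm(Y),X)\geq 2$ for inner $Y$). So you have essentially reconstructed the cited proof rather than offered an alternative route.

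Two small remarks. First, you rightly read $\langle \beta^*L,Y^\flat_0,Y^\flat_r\rangle$ as the linear span rather than the nonnegative cone of Notation~\ref{not:cones}; this is the only reading under which the statement is consistent, since $L(0,0)$ and $L(\delta,\delta)$ carry negative coefficients on $Y^\flat_r$ and $Y^\flat_0$ respectively. Second, a trivial slip: the exceptional divisor is $Y^\flat_0=\P(\cN_{Y_0/X}^\vee)$, not $\P(\cN^+(Y_0)^\vee)$ --- the sink has $\cN^+(Y_0)=0$. Relatedly, the existence of your test curves $\ell_0,\ell_r$ tacitly assumes $\codim Y_0,\codim Y_r\geq 2$ so that the blowup is nontrivial; this is implicit in the paper's setup and harmless for the applications.
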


\begin{proof}
The proof follows verbatim \cite[Proposition~4.7]{WORS3}. Note that the assumption on the Picard number of $X$ in the quoted reference
was used only to guarantee that  $\codim(B^\pm(Y),X) \ge 2$  for every $Y\in \cY^\circ$.
\end{proof}
Combining  Lemmas \ref{lem:movcone} and \ref{lem:baselocus}, we get the {\em stable base locus decomposition} of the cone $\ol{\Mov(X^\flat)}\cap \langle \beta^* L,Y^\flat_0,Y^\flat_r\rangle$, that we have represented in Figure \ref{fig:movbordism}; 
the stable base loci (open) chambers are denoted by $N_{i,j}$, and defined as:
\[
N_{i,j}=\begin{cases}
\Int\langle L(a_i,a_{j-1}),L(a_{i+1},a_{j-1}),L(a_i,a_{j}),L(a_{i+1},a_{j})\rangle & \mbox{ if }j-i\geq 2,\\[3pt]
\Int\langle L(a_i,a_{i}),L(a_i,a_{i+1}),L(a_{i+1},a_{i+1})\rangle & \mbox{ if }j=i+1.
\end{cases}
\] 
for $0 \le i < j \le r$.
\begin{figure}[h!!]
\begin{tikzpicture}[scale=0.9]
\draw (-6,0) -- (-4,0) -- (-5,-1) -- (-6,0);
\draw (6,0) -- (4,0) -- (5,-1) -- (6,0);
\draw [thick] (-6,0) -- (6,0) -- (0,-6) -- (-6,0);
\draw (-4,0) -- (1,-5);
\draw (-4,-2) -- (-2,0) -- (2,-4);
\draw (-3,-3) -- (0,0) -- (3,-3);
\draw (-2,-4) -- (2,0) -- (4,-2);
\draw (-1,-5) -- (4,0); 
\draw [fill=blue!10] (0,-6)--(1,-5)--(0,-4)--(-1,-5)--(0,-6);
\node [below] at (-4,-4) {$\Mov(X^\flat)$}; \node at (0,-5) {$N_{0,r}$};
\node at (-1,-4) {$N_{0,r-1}$};\node at (1,-4) {$N_{1,r}$};
\node at (-2,-3) {$N_{0,r-2}$};\node at (0,-3) {$N_{1,r-1}$};\node at (2,-3) {$N_{2,r}$};
\node at (-3,-2) {$\ddots$};\node at (-1,-2) {$\ddots$};\node at (1,-2) {$\iddots$};\node at (3,-2) {$\iddots$};
\node at (-4,-1) {$N_{0,2}$};\node at (-2,-1) {$N_{1,3}$};\node at (0,-1) {$\ldots$};\node at (2,-1) {$\ldots$};\node at (4,-1) {$N_{r-2,r}$};
\node [below] at (-5,0.00) {$N_{0,1}$};\node [below] at (-3,0.00) {$N_{1,2}$};\node [below] at (-1,-0.18) {$\ldots$};\node [below] at (1,-0.18) {$\ldots$};\node [below] at (3,-0.18) {$\ldots$} ;\node [below] at (5,0.05) {$N_{r-1,r}$};
\fill[black!90!white] (-6,0) circle (0.6mm); \node[anchor=south]  at (-6,0) {$L(0,0)$\qquad};
\fill[black!90!white] (6,0) circle (0.6mm); \node[anchor=south] at (6,0) {$\qquad\,\,L(\delta,\delta)$};
\fill[black!90!white] (0,-6) circle (0.6mm); \node[anchor=north] at (0,-6) {$\beta^*L=L(0,\delta)$};
\end{tikzpicture}
\caption{SBL decomposition of $\ol{\Mov(X^\flat)}\cap \langle \beta^* L,Y^\flat_0,Y^\flat_r\rangle$.
\label{fig:movbordism}
} \end{figure}

Finally, following the lines of argumentation of \cite[Proposition 4.11]{WORS3}, we can prove  that the stable base locus decomposition of the cone $\ol{\Mov(X^\flat)}\cap \langle \beta^* L,Y^\flat_0,Y^\flat_r\rangle$  coincides with its Mori chamber decomposition. 

\begin{proposition}\label{prop:WORS3}
Let $(X,L)$ be a polarized pair, with $X$ smooth,  with an equalized $\C^*$-action such that  $\codim(B^\pm(Y),X)>1$ for every inner fixed point component $Y\in\cY^\circ$.
Then for every $0 \le i \le j \le r$ there exist a smooth projective variety $X_{i,j}$ and a small $\Q$-factorial modification $\varphi_{i,j}:X^{\flat}\dashrightarrow X_{i,j}$ such that
\begin{itemize}
\item $X_{i,j}=X(\tau_-,\tau_+)$ whenever $\tau_-\in(a_i,a_{i+1})$, $\tau_+\in (a_{j-1},a_j)$; 
\item $\Nef(X_{i,j})\cap \langle \beta^* L,Y^\flat_0,Y^\flat_r\rangle=\ol{N_{i,j}}$.
\end{itemize}
\end{proposition}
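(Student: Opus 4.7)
The plan is to mimic the argument of \cite[Proposition~4.11]{WORS3}, replacing the Picard rank one hypothesis used there with the codimension condition on the $B^\pm(Y)$ provided in our statement; as already observed in the proof of Lemma~\ref{lem:movcone}, the Picard rank assumption in \cite{WORS3} was only used to guarantee the codimension bound, so the substitution is clean. Concretely, by Lemma~\ref{lem:bordism} applied to $X^\flat$, whose inner fixed point components coincide with $\cY^\circ$ and whose BB-cells restrict from those of $X$ outside $Y_0\cup Y_r$, the induced $\C^*$-action on $X^\flat$ is a bordism. This is what makes both Lemma~\ref{lem:movcone} and the base locus description of Lemma~\ref{lem:baselocus} available.

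Next, for each pair $0\le i<j\le r$ (the case $i=j$ being trivial with $X_{i,i}$ a point, so set aside) choose $\tau_-\in(a_i,a_{i+1})$ and $\tau_+\in(a_{j-1},a_j)$, and define $X_{i,j}:=\Proj A(\tau_-,\tau_+)$. The first bulleted assertion is then a tautology once we show that this definition is independent of the choice of $\tau_\pm$ within the chamber $N_{i,j}$; this follows from the fact, provided by Lemma~\ref{lem:baselocus}, that the stable base locus $\B(mL(\tau_-,\tau_+))$ depends only on $i,j$, combined with the standard Zariski-type statement that $\Proj A(\tau_-,\tau_+)$ depends only on the chamber of the big cone in which $L(\tau_-,\tau_+)$ lies. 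Moreover, because under our codimension hypothesis every stratum $B^\pm_k$ appearing in $\cB_{i,j}$ has codimension at least two in $X^\flat$, the rational map $\varphi_{i,j}:X^\flat\dashrightarrow X_{i,j}$ defined by sections of $mL(\tau_-,\tau_+)$ is an isomorphism in codimension one, i.e.\ a small modification.

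The technical heart of the proof is the smoothness of $X_{i,j}$ and the fact that $\varphi_{i,j}$ is a $\Q$-factorial small modification. I would prove this inductively along walls of the chamber structure of Figure~\ref{fig:movbordism}, starting from the central chamber $N_{0,r}$ (where by Remark~\ref{rem:Xflat} the pruning is just $X^\flat$, which is smooth as a blowup of $X$ along the disjoint smooth loci $Y_0,Y_r$) and crossing walls one at a time. A wall crossing from $N_{i,j}$ either drops to a neighbouring chamber $N_{i+1,j}$ or $N_{i,j-1}$; in each case, using the equalization hypothesis together with the explicit description of orbit closures and normal bundles at inner fixed point components (via \eqref{eq:normal+-}, \eqref{eq:Vpm} and Lemma~\ref{lem:AMFM}), one identifies the flipping locus as a projective bundle of the form $V^+(Y)\times V^-(Y)$ over an inner $Y$, with normal bundle a direct sum of tautological line bundles. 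This is exactly the local model of an Atiyah flip between smooth varieties, so both source and target of the flip are smooth and $\Q$-factorial, and the induction proceeds. The codimension condition is used precisely to guarantee that the flipping loci (and not divisorial loci) are what one encounters at each wall.

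The second bullet, $\Nef(X_{i,j})\cap\langle \beta^*L,Y_0^\flat,Y_r^\flat\rangle=\overline{N_{i,j}}$, is then essentially built-in: by construction $L(\tau_-,\tau_+)$ becomes ample on $X_{i,j}$ for $(\tau_-,\tau_+)$ in the interior of $N_{i,j}$, hence the interior of $N_{i,j}$ is contained in the ample cone of $X_{i,j}$; the reverse inclusion follows because the small modifications $X^\flat\dashrightarrow X_{i,j}$ and $X_{i,j}\dashrightarrow X_{i',j'}$ for adjacent chambers are not isomorphisms (being genuine flips at the walls), so no class strictly outside $\overline{N_{i,j}}$ can be nef on $X_{i,j}$. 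The main obstacle, as indicated, is verifying at each wall-crossing that the flipping locus has the correct smooth local product structure; this is where the combination of equalization and the codimension condition is essential, and where the argument of \cite[Proposition~4.11]{WORS3} adapts essentially verbatim.
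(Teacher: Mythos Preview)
Your approach is essentially the same as the paper's: an inductive construction starting from $X^\flat=X_{0,r}$ (central chamber) and passing to neighbouring chambers via Atiyah flips, using the codimension hypothesis in place of the Picard rank one condition of \cite{WORS3}. The only real difference is the logical order. You define $X_{i,j}:=\Proj A(\tau_-,\tau_+)$ first and then argue smoothness and $\Q$-factoriality via wall-crossing, whereas the paper first constructs each $X_{i,j}$ recursively as an explicit smooth blowup--blowdown of the previous model (invoking \cite[Theorem~3.1]{WORS3}) and only afterwards identifies the result with $X(\tau_-,\tau_+)$ via the Nef cone computation. The paper's order has the advantage that smoothness is immediate from the construction, and one does not need a separate argument to match the flip target with the $\Proj$.

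One point to tighten: your description of the flipping locus as ``a projective bundle of the form $V^+(Y)\times V^-(Y)$ over an inner $Y$'' is imprecise. In the paper's formulation the indeterminacy locus of $\prul:X_{i,j}\dashrightarrow X_{i,j-1}$ is the strict transform in $X_{i,j}$ of $B^-_{j-1}$ (and analogously $B^+_{i+1}$ for $\prur$), which is blown up smoothly and then blown down along the other ruling of the exceptional divisor. The roles of $V^\pm(Y)$ are as the two sides of the flipped locus, not as a product; writing $V^+(Y)\times V^-(Y)$ conflates the common resolution with the flipping loci themselves.
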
 

\begin{proof}
We start from $X_{0,r}:=X^\flat$, that satisfies $\Nef(X_{0,r})=\ol{N_{0,r}}$. Then for every pair of indices $i,j$ such that $0 \le i \le j \le r$, we construct a small $\Q$-factorial modification $\varphi_{i,j}:X_{0,r}\dashrightarrow X_{i,j}$ (called {\em pruning map}), such that

\begin{itemize}[topsep=10pt]
\item the variety $X_{i,j}$ is smooth, projective, with a $\C^*$-action such that $\varphi_{i,j}$ is $\C^*$-equivariant;
\item the set of inner fixed-point components of the action on  $X_{i,j}$ is:  
$$
\{\varphi_{i,j}(Y)|\,\,Y\in\cY^\circ,\,\, a_i<\mu_L(Y)<a_j\},
$$
and $\varphi_{i,j}$ is an isomorphism on an open set containing these components.
\end{itemize}
These modifications are constructed recursively as follows: assuming we have already constructed the variety $X_{i,j}$, and the map $\varphi_{i,j}$ (with $i<j-1$), then the varieties $X_{i,j-1},X_{i+1,j}$ are defined by means of small $\Q$-factorial modifications $\prul: X_{i,j} \dashrightarrow X_{i,j-1}$, $\prur: X_{i,j} \dashrightarrow X_{i+1,j}$  constructed as in \cite[Theorem 3.1]{WORS3}.

Namely $X_{i,j-1}$ (resp. $X_{i+1,j}$) is constructed as the smooth blowup $b_\prul:X^\prul_{i,j} \to X_{i,j}$ (resp. $b_\prur:X^\prur_{i,j} \to X_{i,j}$) along the strict transform into $X_{i,j}$ of $B^-_{j-1}$ (resp. $B^+_{i+1}$), followed by a smooth blowdown with the same exceptional locus; this blowdown contracts the class of the closure of a $1$-dimensional $\C^*$-orbit in $\Exc(b_\prul)$ (resp. $\Exc(b_\prur)$). 
\begin{equation}\label{eq:blowups}\begin{gathered}
\xymatrix{&X^\prul_{i,j}\ar[ld]\ar[rd]^{b_\prul}&&X^\prur_{i,j}\ar[ld]_{b_\prur}\ar[rd]&\\
X_{i,j-1}&&X_{i,j}\ar@{-->}[ll]^{\prul}\ar@{-->}[rr]_{\prur}&&X_{i+1,j}}
\end{gathered}
\end{equation}

In particular, the indeterminacy loci of $\prul,\prur$ are the strict transforms of $B^-_{j-1}$, $B^+_{i+1}$ into $X_{i,j}$, respectively, and the criticality of the $\C^*$-action gets reduced by one via these maps.  
We then set $$\varphi_{i+1,j}:=\prur\circ\varphi_{i,j},\qquad\varphi_{i,j-1}:=\prul\circ\varphi_{i,j}.$$

Finally, arguing as in the proof of \cite[Proposition 4.11]{WORS3} we can show that, identifying the space $\NU(X_{i,j})$ with $\NU(X^\flat)$ we have $\Nef(X_{i,j})\cap \langle \beta^* L,Y^\flat_0,Y^\flat_r\rangle = \ol{N_{i,j}}$. In other words, 
$$
X_{i,j}=X(\tau_-,\tau_+), \mbox{ whenever } i(\tau_-)=i, \,\,j(\tau_+)=j.
$$
This completes the proof.
\end{proof}

\begin{remark}\label{rem:WORS3}
Note that by construction, the maps $\prul,\prur$ commute, so that every map $\varphi_{i,j}$ can be written as a composition $$\varphi_{i,j}=\prul^{i}\circ\prur^{r-j}.$$
We will call $\prul,\prur$ {\em elementary pruning maps}.
\end{remark}

Let us finish this section relating  extremal fixed point components of the varieties $X^s_{i,j}$ and $X^d_{i,j}$  
appearing in the above proof with geometric quotients of $X$.

\begin{remark}\label{rem:GITblowup}
In the recursive construction described in Diagram (\ref{eq:blowups}), the $\C^*$-action on the variety $X^\prur_{i,j}$ is of B-type.  

The morphism $b_\prur$ sends the sink $Y^\prur$ of $X^\prur_{i,j}$  onto the sink of $X_{i,j}$, which is, by construction, the geometric quotient $\GX_{i,i+1}$ of $X$; similarly  the contraction $X^\prur_{i,j}\to X_{i+1,j}$ sends $Y^\prur$ onto the geometric quotient $\GX_{i+1,i+2}$. 

Since $X^\prur_{i,j}$ is the smooth blowup of $X_{i,j}$ along the strict transform of $B^+_{i+1}$ into $X_{i,j}$, it follows that $Y^\prur$ can be described as the smooth blowup of $\GX_{i,i+1}$ along the subvariety $V^+(Y_{i+1})$ parametrizing $\C^*$-orbits in $X$ with 
source at $Y_{i+1}$. In a similar way, the source $Y^\prul$ of $X^\prul_{i,j}$ is the smooth blowup of $\GX_{j-1,j}$ along $V^-(Y_{j-1})$.  
\end{remark}


\section{Chow quotients}\label{sec:Chow}

In this section we consider a nontrivial $\C^*$-action on a polarized pair $(X,L)$ and we  study the Chow quotient of $X$ by $\C^*$, a concept due to Kapranov who, in his seminal work \cite{Kap}, defined it and identified it with a moduli of pointed curves when $X$ is a Grassmannian endowed with an action of a complex torus. Slightly different parameter spaces for $\C^*$-invariant cycles were previously proposed  by Bia{\l}ynicki-Birula and Sommese, \cite{BBSo}, inspired by a construction by Fujiki (cf. \cite{Fj}).

Following Kapranov's definition,  we may find a nonempty open set $V\subset X$ of points $x$ such that the cycles $\overline{\C^*\cdot x}$ belong to a single homology class in $X$. By shrinking $V$, if necessary, the geometric quotient of $V$ by the action of $\C^*$ exists. Moreover, taking closures in $X$ of the orbits of points of $V$, we obtain an algebraic irreducible family of rational curves in $X$, therefore a morphism $\psi:V/\C^*\to \Chow(X)$.

\begin{definition}
With the above notation, the {\em Chow quotient} of $X$ by the action of $\C^*$ is defined as the closure:
$$
\overline{\CX}:=\overline{\psi(V/\C^*)}\subset \Chow(X).
$$
We will denote by $\CX$ the normalization of $\overline{\CX}$, and call it the {\em normalized Chow quotient} of $X$ by the $\C^*$-action. 
The pullback to $\CX$ of the universal family of $\Chow(X)$ will be denoted by $p:\cU\to \CX$, with evaluation morphism $q:\cU\to X$.
\end{definition}

The  $\C^*$-action on $X$ extends naturally to an action on $\Chow(X)$ and the corresponding universal family of cycles. This action restricts to the trivial action on $V/\C^*$, hence to the trivial action on $\CX$. Consequently we have an action on $\cU$, so that the maps $p:\cU\to\CX$, $q:\cU\to X$ are $\C^*$-equivariant, and every cycle parametrized by $\CX$ is $\C^*$-invariant. 

\begin{example}\label{ex:toric2}
In the case of $\C^*$-actions on toric varieties (see Example \ref{ex:toric}), Chow quotients are known to be toric varieties, that have been described combinatorially by Kapranov, Sturmfels and Zelevinsky in \cite{KSZ}. 
\end{example}

Throughout the rest of the paper we will assume that  the  $\C^*$-action is  equalized and that $X$ is smooth. Let us start by observing that in this case every $\C^*$-invariant cycle parametrized by $\CX$ is a chain of rational curves with simple normal crossings
(see \cite[Lemma 3.12]{MMW} for a similar statement for the parameter space of stable maps):

\begin{lemma}\label{lem:reduced}
Let $(X,L)$ be a polarized pair, with $X$ smooth, admitting an equalized $\C^*$-action. 
Let $z\in \CX$ be any point, and let $Z=p^{-1}(z)$ be the corresponding cycle. 
Then $Z$ is reduced, and its dual graph 
 is of type $\DA$. 
\end{lemma}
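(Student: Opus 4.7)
The plan is to analyze $Z$ by degenerating the generic fiber of $p$ to $Z$ via a one-parameter family and to extract strong restrictions from the equalization hypothesis.

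Let $\gamma:(T,0)\to(\CX,z)$ be a smooth pointed curve germ with $\gamma(T\setminus\{0\})$ contained in the locus where $\cU\to \CX$ is a smooth family of irreducible $\C^*$-invariant rational curves. After pulling back $p:\cU\to\CX$ along $\gamma$, normalizing, and performing a semistable reduction, one obtains a smooth surface $\pi:W\to T$, flat over $T$, with generic fibers mapping isomorphically via $\bar{q}:W\to X$ onto smooth orbit closures, together with a $\C^*$-action commuting with $\pi$ (and trivial on $T$) making $\bar q$ equivariant. The central fiber $W_0$ is then a connected reduced curve of arithmetic genus zero, hence a tree of smooth rational curves, and $Z=\bar{q}_*[W_0]$.

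To obtain reducedness, I would exploit the equalization hypothesis as follows. By Lemma \ref{lem:AMFM}, on a generic fiber $W_t\cong\P^1$ the $\C^*$-action has tangent weights $\pm 1$ at the two fixed points. By equivariance, the same weight constraint holds on each component $C$ of $W_0$ on which $\C^*$ acts nontrivially: $C\cong\P^1$ has exactly two $\C^*$-fixed points with tangent weights $\pm 1$. Since $\bar q|_C:C\to \bar q(C)$ is $\C^*$-equivariant between two such $\P^1$'s and the tangent weights match at the fixed points, $\bar q|_C$ is of degree $1$, hence birational. Components of $W_0$ on which $\C^*$ acts trivially are contracted by $\bar q$ to a $\C^*$-fixed point of $X$ and contribute nothing to the cycle $Z$. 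Therefore every irreducible component of $Z$ appears with multiplicity one, and $Z$ is reduced.

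For the type $\DA$ structure, I would analyze the $\C^*$-fixed locus on $W$. Over the generic point of $T$ it consists of the sink and source of the orbit, tracing out two sections $\sigma_0,\sigma_\infty\subset W$ meeting $W_0$ in one point each. Any additional $\C^*$-fixed point of $W_0$ that is smooth in $W$ would, by the weight decomposition of $T_{\,\cdot}\,W$ at a $\C^*$-fixed smooth point (one weight must vanish along the fiber direction of $\pi$), force the existence of another $\C^*$-fixed curve in $W$, contradicting the behavior on the generic fiber. Consequently, the only non-nodal $\C^*$-fixed points of $W_0$ are $\sigma_0\cap W_0$ and $\sigma_\infty\cap W_0$. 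Since every non-contracted component of $W_0$ is a $\P^1$ with exactly two $\C^*$-fixed points, each can meet at most two other components at nodes; a tree in which every vertex has degree $\leq 2$ is a chain, and the two endpoints carry $\sigma_0\cap W_0$ and $\sigma_\infty\cap W_0$ respectively. This gives the dual graph of type $\DA$.

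The main obstacle is the bookkeeping surrounding the semistable reduction and the possible contracted components of $W_0$: one must verify that these ghost components, although irrelevant for $Z$, do not interfere with the counting of fixed points and nodes used in the chain argument, and that performing a base change on $T$ does not alter the cycle parametrized by $z$ in $\CX$. Both points can be addressed by normalizing $W$ minimally and contracting all chains of $(-1)$-curves that map to fixed points of $X$, reducing to the situation in which every component of $W_0$ surjects onto a component of $Z$.
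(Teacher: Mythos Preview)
Your approach can be made to work, but it is substantially heavier than necessary, and one step is underjustified.

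The paper's argument is a short degree count using Lemma~\ref{lem:AMFM}. The general orbit closure has $L$-degree $\delta$, hence $L\cdot Z=\delta$. Each irreducible component $Z_j$ of the support of $Z$ is $\C^*$-invariant, so it is either contained in a fixed-point component (with positive $L$-degree) or is an orbit closure with $L\cdot Z_j=\mu_L(z_j^+)-\mu_L(z_j^-)>0$. Since $Z$ is connected and meets $Y_0$ and $Y_r$, the weight intervals $[\mu_L(z_j^-),\mu_L(z_j^+)]$ of the orbit-closure components must cover $[0,\delta]$, so their total length is already $\geq\delta$. Comparing with $\delta=\sum_j m_j(L\cdot Z_j)$ forces every $m_j=1$, no components inside the fixed locus, and the intervals to tile $[0,\delta]$ without overlap---which is precisely the type-$\DA$ chain. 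No semistable reduction or surface analysis enters.

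In your argument, the assertion that each non-contracted component $C\subset W_0$ has tangent weights $\pm 1$ does not follow ``by equivariance'' from the generic fiber: the nodes of $W_0$ are \emph{isolated} fixed points of $W$, and their tangent weights are not a priori determined by those along the sections $\sigma_0,\sigma_\infty$. The correct justification is that a generic point of a non-contracted $C$ maps via $\bar q$ to a point of $X$ with trivial isotropy (equalization), so the generic isotropy on $C$ itself is trivial and the action on $C\cong\P^1$ is the standard one; then the degree-$1$ conclusion is valid. Your proposed cleanup---contracting components of $W_0$ mapping to fixed points---is also problematic (interior components of the chain are $(-2)$-curves, not $(-1)$-curves), but in fact unnecessary: such components contribute nothing to $\bar q_*[W_0]$, and the $\DA$ structure of $Z$ is inherited directly from that of $W_0$.
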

\begin{proof}
Note that $Z$ is necessarily a connected cycle meeting the sink and the source, of $L$-degree equal to the bandwidth $\delta=a_r$ of the action, by Lemma \ref{lem:AMFM}. If $Z_j$ is an irreducible component of $Z$, denoting by $z_j^-,z_j^+$ its sink and source, the equalization assumption implies that the $L$-degree of the reduced scheme of $Z_j$ is equal to $\mu_L(z_j^+)-\mu_L(z_j^-)$ (again by Lemma \ref{lem:AMFM}), and this number is of type $a_{i}-a_{j}$ for some $i,j\in\{0,\dots,r\}$. We conclude by summing on the components of $Z$.
\end{proof}

\begin{remark}\label{rem:ratcomp}
Note that the proof of Lemma \ref{lem:reduced} also implies that an irreducible component $Z_k$ of $Z=p^{-1}(z)$  is a rational curve of degree $\mu_L(z_k^+)-\mu_L(z_k^-)$, where $z_k^-,z_k^+$ denote the sink and the source of $Z_k$, respectively.  In particular, there are no components of $Z$ contained in a fixed-point component.
\end{remark}

A consequence of the reducedness given by Lemma \ref{lem:reduced} is the following statement, which can be obtained by applying standard arguments on parameter spaces:

\begin{lemma}\label{lem:flat}
Let $(X,L)$ be a polarized pair, with $X$ smooth,  with an equalized $\C^*$-action.   
Then the universal family $p:\cU\to \CX$  is flat.
\end{lemma}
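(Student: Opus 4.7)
The plan is to deduce flatness from constancy of the Hilbert polynomial on fibers, invoking the standard criterion for projective morphisms over a reduced noetherian base (Hartshorne III.9.9): if $f:W\to S$ is projective with $S$ integral noetherian and $\mathcal{L}$ is $f$-ample, then $f$ is flat iff the Hilbert polynomial $n\mapsto \chi(W_s,\mathcal{L}|_{W_s}^{\otimes n})$ is independent of $s\in S$.

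First I would record the structural inputs. The map $p:\cU\to\CX$ is projective, since $\cU$ is a closed subscheme of $\CX\times X$ (it is the pullback of the universal family from $\Chow(X)$, which in turn sits as a closed subscheme of $\Chow(X)\times X$), and $\CX$ is integral noetherian, being the normalization of an irreducible closed subvariety of $\Chow(X)$. The line bundle $q^*L$, with $q:\cU\to X$ the evaluation, is $p$-ample, because the cycles parametrized by $\CX$ are $L$-polarized curves in $X$.

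Next I would verify constancy of the Hilbert polynomial by analyzing a fiber $Z=p^{-1}(z)$. By Lemma~\ref{lem:reduced}, $Z$ is reduced, connected and its dual graph is of type $\DA$; by Remark~\ref{rem:ratcomp}, every irreducible component of $Z$ is a smooth rational curve. Consequently $Z$ is a nodal chain of $\mathbb{P}^1$'s: if $Z$ has $k$ components, its normalization is a disjoint union of $k$ copies of $\mathbb{P}^1$, connected through $k-1$ nodes, so the partial normalization sequence yields $\chi(\cO_Z)=k-(k-1)=1$, i.e.\ $p_a(Z)=0$. The $L$-degree of $Z$, computed componentwise via the AM vs.\ FM formula (Lemma~\ref{lem:AMFM}) exactly as in the proof of Lemma~\ref{lem:reduced}, telescopes to $a_r-a_0=\delta$. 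Therefore
\[
\chi\bigl(Z,\,q^*L|_Z^{\otimes n}\bigr)\;=\;\delta\, n+1
\]
for every $z\in\CX$, so the Hilbert polynomial is constant and the criterion gives flatness.

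I do not expect any serious obstacle: the reducedness of the fibers (Lemma~\ref{lem:reduced}) and the AM vs.\ FM degree computation already do the real work, and the only delicate point is identifying the correct $p$-ample sheaf and genus calculation for an $\DA$-type chain. Alternatively, one could avoid the explicit polynomial by noting that $\cU\to\CX$ has reduced, equidimensional one-dimensional fibers over a normal base and deducing flatness from the fact that local sections of $\mathcal{O}_\cU$ along a fiber cannot have embedded components, but the Hilbert polynomial route is cleaner and matches the hint that this is a standard parameter-space argument.
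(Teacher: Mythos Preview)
Your route via constancy of the Hilbert polynomial differs from the paper's argument, which instead invokes two results from Koll\'ar's \emph{Rational Curves on Algebraic Varieties}: first \cite[Theorem~I.6.6]{kollar} to obtain flatness of $p$ at the generic point of every component of every fiber (using only that each fiber is reduced there, together with the normality of $\CX$), and then \cite[Theorem~I.7.3.1]{kollar} to extend flatness to all points. The distinction matters because your computation has a genuine gap.

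The proof of Lemma~\ref{lem:reduced} only shows that the associated \emph{cycle} is reduced: each $1$-dimensional component of the scheme-theoretic fiber $Z=p^{-1}(z)$ appears with multiplicity one, i.e., $Z$ is reduced at the generic point of each component. That is exactly the hypothesis the paper feeds into Koll\'ar's criteria. Your Hilbert polynomial argument, by contrast, requires that $Z$ be a reduced \emph{scheme}: without ruling out embedded $0$-dimensional components you cannot conclude $\chi(\cO_Z)=1$, so the identity $\chi(Z,q^*L_{|Z}^{\otimes n})=\delta n+1$ is not justified. Nothing in Lemma~\ref{lem:reduced} or Remark~\ref{rem:ratcomp} excludes such embedded structure a priori; once flatness is known one can deduce it a posteriori (the Hilbert polynomial is then forced to equal $\delta n+1$, hence $\chi(\cO_Z)=1$), but using that here is circular. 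The passage from generic reducedness of fibers over a normal base to flatness is precisely the nontrivial step your argument elides, and it is what the paper outsources to Koll\'ar. Your alternative sketch at the end has the same problem, since it also takes ``reduced fibers'' as an input rather than deriving it.
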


\begin{proof}
Let $Z=p^{-1}(z)$ be any fiber of $p$. By Lemma \ref{lem:reduced}, the components of $Z$ are all reduced, so that $Z$ is normal at the general point of each component and, since $\CX$ is normal, we may apply \cite[Theorem~I.6.6]{kollar} to claim that $p$ is flat at the general point of any component of $Z$. We then conclude by applying \cite[Theorem~I.7.3.1]{kollar}. 
\end{proof}

\begin{remark}\label{rem:Hilb}
As a consequence of the flatness of $\cU\to \CX$, we obtain a morphism $\CX\to \Hilb(X)$ such that $\cU$ is the pullback to $\CX$ of the universal family over $\Hilb(X)$. By shrinking the open subset $V$ introduced above, we can assume that the image of $\CX\to \Hilb(X)$ is the closure of the image of the natural map from $V/\C^*$ to $\Hilb(X)$, whose normalization $\HX$ we call the {\em Hilbert quotient} of the action. On the other hand we have a natural morphism from $\Hilb(X)$ to $\Chow(X)$ (cf. \cite[Theorem I.6.3]{kollar}), so it turns out that $\CX$ coincides with the Hilbert quotient of $X$. The idea of considering a flat family of $\C^*$-invariant cycles, whose general element is the closure of a general orbit --in the complex geometric setting-- goes back to Fujiki, \cite{Fj} (see also \cite[Theorem~0.1.2]{BBSo}).
\end{remark}

\begin{remark}\label{rem:chblow}
Given an equalized $\C^*$-action on $X$, every cycle parametrized by $\CX$ has a unique lift-up to the blowup $X^\flat$  of $X$ along the sink and the source. In particular  $\CX \simeq \CX^\flat$.
\end{remark}

\begin{proposition}\label{prop:comp}
If the action is of B-type and faithful, then  $\CX$ is the normalization of the component of the Chow/Hilbert scheme containing the class of the closure of a general orbit of the action.
\end{proposition}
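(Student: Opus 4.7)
The plan is to reduce the claim to a smoothness and dimension computation on the Hilbert scheme at the class of a general orbit closure. By Remark \ref{rem:Hilb} the Chow and Hilbert quotients coincide, so it suffices to show that $\ol{\CX}$ is a full irreducible component of $\Hilb(X)$; the statement for $\Chow(X)$ then follows because a general cycle $C$ parametrized by $\CX$ is a reduced smooth rational curve (Lemmas \ref{lem:reduced} and \ref{lem:AMFM}), making $\Hilb(X)\to\Chow(X)$ a local isomorphism at $[C]$. Choosing a general $z\in\CX$, faithfulness of the $\C^*$-action ensures that $C=p^{-1}(z)$ is the closure of a single $1$-dimensional orbit, and equalization together with Lemma \ref{lem:AMFM} give that $C$ is a smooth rational curve with sink $y_0\in Y_0$ and source $y_r\in Y_r$.

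The main step is the computation of the normal bundle $\cN_{C/X}$. Because the action is of B-type, the extremal fixed components $Y_0,Y_r$ are divisors in $X$, so equalization forces $T_C|_{y_0}$ to fill out the entire nonzero-weight eigenspace of the $\C^*$-representation on $T_X|_{y_0}$, and analogously at $y_r$; hence
\[
\cN_{C/X}|_{y_0}\,=\,T_{Y_0}|_{y_0},\qquad \cN_{C/X}|_{y_r}\,=\,T_{Y_r}|_{y_r},
\]
both of which are weight-$0$ representations of $\C^*$. By the equivariant Grothendieck splitting theorem, $\cN_{C/X}\simeq\bigoplus_{i=1}^{n-1}\cO(b_i)\langle w_i\rangle$ on $C\simeq\P^1$, with $n=\dim X$. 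The two fiber weights of an equivariant summand $\cO(b)\langle w\rangle$ at the two fixed points of $\P^1$ differ exactly by $b$, so the vanishing of every fiber weight at both endpoints forces $b_i=w_i=0$ for all $i$. Therefore $\cN_{C/X}\simeq\cO_C^{\oplus n-1}$, giving $h^0(\cN_{C/X})=n-1$ and $h^1(\cN_{C/X})=0$, so $\Hilb(X)$ is smooth at $[C]$ of dimension $n-1$.

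Finally, faithfulness gives $\dim(V/\C^*)=n-1$, so $\ol{\CX}$ is an irreducible $(n-1)$-dimensional subvariety of $\Hilb(X)$ passing through the smooth point $[C]$ and must therefore coincide with the whole irreducible component through that point. The main obstacle I expect is the equivariant triviality of $\cN_{C/X}$: it relies crucially on \emph{both} the B-type hypothesis (so that $\cN_{C/X}|_{y_0}$ coincides with the full weight-$0$ space $T_{Y_0}|_{y_0}$, pinning the fiber weight to zero) and on equalization (so that the orbit direction $T_C|_{y_0}$ is the full nonzero-weight eigenspace of $T_X|_{y_0}$); if either assumption were weakened, summands $\cO(b_i)\langle w_i\rangle$ with $b_i>0$ could survive, producing tangent directions to $\Hilb(X)$ at $[C]$ not arising from deformations inside $\ol{\CX}$, and the enclosing component of $\Hilb(X)$ would then strictly contain $\ol{\CX}$.
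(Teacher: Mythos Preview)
Your proof is correct and follows essentially the same line as the paper's: compute the normal bundle of a general orbit closure via the $\C^*$-weights at its two fixed points, conclude $\cN_{C/X}\simeq\cO_{\P^1}^{\oplus n-1}$, and then match dimensions. The paper packages the weight computation by citing \cite[Lemma~2.16]{WORS1} rather than invoking the equivariant Grothendieck splitting directly, but the content is identical.
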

\begin{proof}
The faithfulness implies that the action is equalized at the sink and the source (see \cite[Remark 3.2]{WORS1}).
If $C\iso\mathbb{P}^1$ is the closure of a general orbit, then we can use \cite[Lemma 2.16]{WORS1} to compute the splitting type of the normal bundle of $C$ in $X$. Since the weights of the $\C^*$-action on $\cN_{C/X}$ are $0^{(\dim X-1)}$ both at the sink and at the source of $C$,
it follows that $\cN_{C/X}\iso\cO_{\PP^1}^{\oplus n-1}$ and therefore the component of $\Hilb(X)$ containing the point $[C]$ is smooth at $[C]$ and of dimension $n-1 = \dim \CX$.
\end{proof}

\subsection{The universal bundle of a $\C^*$-action}\label{ssec:bundle}

Let $(X,L)$ be a polarized pair, with $X$ smooth, supporting an equalized $\C^*$-action.
We will now construct a vector bundle on $\CX$, whose fibers represent, roughly speaking, the linear spans  of the cycles parametrized by $\CX$ in a certain embedding of $X$. This will allow us to identify $\CX$ with a  subvariety of a Grassmannian. 

\begin{lemma}\label{lem:univbdl}
Let $(X,L)$ be a polarized pair, with $X$ smooth, supporting an equalized $\C^*$-action.  
Then $p_*(q^*(L))$ is locally free of rank $\delta+1$.
\end{lemma}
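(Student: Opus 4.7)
The plan is to prove fiberwise that $h^0(Z,q^*L_{|Z})=\delta+1$ and $h^1(Z,q^*L_{|Z})=0$ for every $z\in\CX$ with $Z:=p^{-1}(z)$, and then to invoke cohomology and base change (Grauert's theorem), using that $p$ is flat and proper by Lemma~\ref{lem:flat}, to conclude that $p_*(q^*L)$ is locally free of the stated rank.

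First, by Lemma~\ref{lem:reduced} the cycle $Z$ is reduced and its dual graph is of type $\DA$, so $Z=\bigcup_{k=0}^{m-1}Z_k$ is a chain of smooth rational curves glued at $m-1$ nodes. By Remark~\ref{rem:ratcomp}, each component $Z_k$ has $q^*L$-degree $d_k=\mu_L(z_k^+)-\mu_L(z_k^-)\geq 1$, so $q^*L_{|Z_k}\cong\cO_{\PP^1}(d_k)$ is globally generated, and $\sum_k d_k=\delta$ by Lemma~\ref{lem:AMFM}.

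Next, I would use the normalization short exact sequence
\[0\lra \cO_Z\lra n_*\cO_{\widetilde Z}\lra \bigoplus_{\text{nodes}}\C\lra 0,\]
where $n:\widetilde Z=\bigsqcup Z_k\to Z$ is the normalization map. Tensoring with the line bundle $q^*L_{|Z}$ (which is flat, hence preserves exactness) and taking cohomology yields
\[0\to H^0(Z,q^*L_{|Z})\to\bigoplus_{k=0}^{m-1}H^0(\PP^1,\cO(d_k))\xrightarrow{\,\mathrm{ev}\,}\bigoplus_{\text{nodes}}\C\to H^1(Z,q^*L_{|Z})\to 0,\]
since $H^1(\PP^1,\cO(d_k))=0$. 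Because every $\cO(d_k)$ with $d_k\geq 1$ is globally generated, the evaluation map $\mathrm{ev}$ (a signed difference of evaluations at the $m-1$ nodes) is surjective: one can choose, for each node separately, a section on one of the two adjacent components vanishing there and then adjust. Consequently $H^1(Z,q^*L_{|Z})=0$ and a dimension count gives
\[h^0(Z,q^*L_{|Z})=\sum_{k=0}^{m-1}(d_k+1)-(m-1)=\delta+1,\]
independently of $z$.

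Finally, since $p$ is flat and proper and the function $z\mapsto h^1(p^{-1}(z),q^*L_{|p^{-1}(z)})$ is identically zero, cohomology and base change (\textit{e.g.}\ Grauert's criterion) gives that $R^1p_*(q^*L)=0$ and $p_*(q^*L)$ is locally free on $\CX$ with fiber at $z$ canonically isomorphic to $H^0(Z,q^*L_{|Z})$; hence it has rank $\delta+1$. The only delicate step is the surjectivity of the node evaluation map, which is the geometric content of the argument; everything else is formal.
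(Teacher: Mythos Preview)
Your proof is correct and follows essentially the same route as the paper: both arguments use the flatness of $p$ (Lemma~\ref{lem:flat}), set up the same normalization/Mayer--Vietoris exact sequence on a fiber (the paper's sequence~(\ref{eq:irredcomp})), check surjectivity of the evaluation-at-nodes map using the ampleness of $L$ on each component, and then conclude via cohomology and base change (the paper cites \cite[Corollary~III.12.9]{Ha}, which is precisely Grauert's criterion). The only cosmetic difference is that you compute $h^0$ by an explicit dimension count whereas the paper appeals to the constancy of $\chi$ along the flat family.
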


\begin{proof}
We use the equalization of the action to ensure, by Lemma \ref{lem:flat}, that $p$ is flat. In particular, $\chi(C,L)$ is the same for every cycle $C$  of the family parametrized by $\CX$, considered as a scheme with its reduced structure. We will show that $\HH^1(C,L)=0$ for every $C$, so that $\dim\HH^0(C,L)$ will be equal to $\delta+1$, and we may then apply \cite[Corollary~III.12.9]{Ha} to obtain the claimed result. 

In order to do so, since the dual graph of $C$ is of type $\DA$ by Lemma \ref{lem:reduced}, we may  number the irreducible components  of a given element $C$ of $\CX$ as $C_1,\dots,C_k$  so that $C_i$ intersects (transversally) $C_{i+1}$ in a point $p_i$, $i=1,\dots,k-1$, and $\{\mu_L(p_i)\}$ is strictly increasing. Note that we have an exact sequence:
\begin{equation}
0\to \HH^0(C,L)\lra \bigoplus_{i=1}^{k}\HH^0(C_i,L_{|C_i})\lra \bigoplus_{i=1}^{k-1}\C_{p_i}\lra \HH^1(C,L)\to 0.
\label{eq:irredcomp}
\end{equation}
Then, since $L$ is ample, one may easily check that the restriction map
$$
\bigoplus_{i=1}^{k}\HH^0(C_i,L_{|C_i})\lra \bigoplus_{i=1}^{k-1}\C_{p_i}
$$
is surjective, hence  $\HH^1(C,L)$ is equal to zero. This completes the proof.
\end{proof}

\begin{definition}
The vector bundle $$\cE:=p_*(q^*(L))$$ will be called the {\em universal bundle} associated to the $\C^*$-action on $(X,L)$.
\end{definition}

\begin{lemma}
In the setting of Lemma \ref{lem:univbdl}, we have a $\C^*$-invariant decomposition $$\cE=\bigoplus_{u=0}^\delta \cL_u,$$ where $\cL_u$ is a line bundle on which $\CC^*$ acts with weight $u$.
\end{lemma}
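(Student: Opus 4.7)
The plan is to exploit the standard principle that a $\C^*$-equivariant locally free sheaf on a base with trivial $\C^*$-action decomposes canonically as a direct sum of its weight subsheaves, and then to compute the weight decomposition of each fiber of $\cE$ explicitly. The chosen linearization of the $\C^*$-action on $L$ induces a $\C^*$-action on $q^*L$; since $p\colon\cU\to\CX$ is $\C^*$-equivariant over the trivial action on $\CX$, the pushforward $\cE=p_*(q^*L)$ inherits a $\C^*$-action covering the trivial one, and hence a canonical decomposition $\cE=\bigoplus_{u\in\Z}\cE_u$ into weight subsheaves.

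It then remains to show that at every $z\in\CX$ exactly the integers $u\in\{0,1,\dots,\delta\}$ appear as weights, each with a one-dimensional weight space in the fiber $\cE_z\cong\HH^0(C_z,L|_{C_z})$. For this I would use Lemma \ref{lem:reduced} to write $C_z=C_1\cup\dots\cup C_k$ as a chain with $C_i\cap C_{i+1}=\{p_i\}$, set $\mu_i:=\mu_L(p_i)$ for $1\le i\le k-1$, and put $\mu_0:=0$, $\mu_k:=\delta$ for the sink weight of $C_1$ and the source weight of $C_k$; by equalization these form an increasing sequence $0=\mu_0<\mu_1<\dots<\mu_k=\delta$. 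By Lemma \ref{lem:AMFM}, $L|_{C_i}\cong\cO_{\P^1}(\mu_i-\mu_{i-1})$, equipped with the standard equalized $\C^*$-linearization of fiber weights $\mu_{i-1}$ and $\mu_i$ at the two fixed points, and a direct computation on $\P^1$ then shows that $\HH^0(C_i,L|_{C_i})$ decomposes into one-dimensional $\C^*$-weight spaces of weights $\mu_{i-1},\mu_{i-1}+1,\dots,\mu_i$.

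Plugging this into the $\C^*$-equivariant exact sequence \eqref{eq:irredcomp} from the proof of Lemma \ref{lem:univbdl} reduces the statement to a weight bookkeeping on $\HH^0(C_z,L|_{C_z})$: for $u\notin[0,\delta]\cap\Z$ the weight-$u$ part of $\bigoplus_i\HH^0(C_i,L|_{C_i})$ vanishes; for $u=\mu_j$ with $1\le j\le k-1$ it has dimension two, but the surjection onto $\bigoplus_i\C_{p_i}$ cancels one copy through the weight-$\mu_j$ summand $\C_{p_j}$; for the remaining $u\in\{0,\dots,\delta\}$ it is already one-dimensional. Summing, exactly one dimension survives for each $u\in\{0,1,\dots,\delta\}$, matching the total rank $\delta+1$ obtained in Lemma \ref{lem:univbdl}.

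Since the rank of each $\cE_u$ is then constant on $\CX$ (equal to $1$ for $u=0,\dots,\delta$ and $0$ otherwise) and $\cE$ is locally free, each nonzero $\cE_u$ is a line bundle, which I would denote $\cL_u$, giving $\cE=\bigoplus_{u=0}^{\delta}\cL_u$ as claimed. The one genuinely delicate step is the weight bookkeeping on the chain of rational curves; the global equivariant splitting over a trivial $\C^*$-base and the identification of each $L|_{C_i}$ with the standard linearization on $\cO_{\P^1}(\mu_i-\mu_{i-1})$ are both routine once equalization is in hand.
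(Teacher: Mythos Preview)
Your proof is correct but proceeds differently from the paper's. The paper computes the weight decomposition of $\HH^0(C,L|_C)$ only at the \emph{general} point of $\CX$ (where $C\simeq\P^1$ and the computation is immediate), and then argues by semicontinuity: defining $\cL_u:=\bigcap_{t\in\C^*}\ker(\mu(t,\cdot)-u\id)$ inside $\cE$, the fiber dimension $\dim\cL_{u,c}$ is upper semicontinuous in $c$, so the general value $1$ is the minimum; since the weight spaces form a direct sum inside the $(\delta+1)$-dimensional fiber $\cE_c$, each $\cL_{u,c}$ is forced to be exactly one-dimensional everywhere, and $\cL_u$ is a line subbundle. You instead compute the weight decomposition of $\HH^0(C_z,L|_{C_z})$ explicitly at \emph{every} point $z$, via the equivariant exact sequence \eqref{eq:irredcomp} and the chain combinatorics. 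This is more work but bypasses the semicontinuity step entirely, and it produces an explicit description of the weight spaces over reducible cycles that the paper's argument does not give directly. Both approaches are valid; yours is more constructive, the paper's more economical.
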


\begin{proof}
The action of $\C^*$ on $X$ extends naturally to a linearization on the vector bundle $\cE$, that we denote by $\mu$. On the fiber $C$ over a point $c\in \CX$ this is the induced action of $\C^*$ on the $(\delta+1)$-dimensional vector space $\HH^0(C,L_{|C})$. For $c\in\CX$ general,  by Lemma \ref{lem:AMFM}, $\HH^0(C,L_{|C})$ is $\C^*$-equivariantly isomorphic to $\HH^0(\P^1,\cO_{\P^1}(\delta))$ equipped with the natural action of $\C^*$, which splits as a direct sum of $1$-dimensional eigenspaces, associated to the weights $0,1,\dots,\delta$. 

For every $u\in\{0,1,\dots,\delta\}$ we consider the set $$\cL_u:=\bigcap_{t\in \C^*}\ker\big(\mu(t,\cdot)-u\id:\cE\to \cE\big),$$ 
so that over every point $c$ we have an inclusion $\bigoplus_{u}\cL_{u,c}\subset \cE_c$. If $c$ is general we have seen that the above inclusion is an equality and $\dim\cL_{u,c}=1$ for every $u$. Since by semicontinuity the dimension of $\cL_{u,c}$ at the general point is the minimal one, it follows that $\dim\cL_{u,c}$ is constant in $c$, that is, $\cL_u$ is a vector subbundle of $\cE$ of rank one, and we have a decomposition $\cE=\bigoplus_u\cL_u$. 
\end{proof}

\begin{definition}\label{def:noH1}
We will say that $(X,L)$ is {\em Chow-projectively normal} with respect to the  $\C^*$-action ({\em CPN}, for short) if $L$ is very ample, and
$$
\HH^1(X,L\otimes I_C)=0,\,\,\mbox{for every cycle $C$ of the family parametrized by $\CX$}.
$$
\end{definition}

Note that this condition implies that the restriction of global sections $\HH^0(X,L)\to\HH^0(C,L_{|C})$ is surjective for every $C$, so that the evaluation map 
$$\HH^0(X,L)\otimes \cO_{\CX}=\HH^0(\CX,\cE)\otimes \cO_{\CX}\lra\cE$$
is surjective. Furthermore, since this map is $\C^*$-equivariant, it provides surjections:
\begin{equation}\label{eq:surjec}
\HH^0(X,L)_u\otimes \cO_{\CX}\lra\cL_u,
\end{equation}
for every weight $u\in\{0,1,\dots,\delta\}$. 

\begin{remark}\label{rem:ratnormal0}
Note that, in the setting of Lemma \ref{lem:univbdl}, since the universal family parametrized by $\CX$ is flat 
(cf.  Lemma \ref{lem:flat}), by semicontinuity 
there exists an integer $m_0$ such that for $m\geq m_0$ we have $\HH^1(X,L^{\otimes m}\otimes I_C)=0$ for every cycle $C$ of the family parametrized by $\CX$.  
In other words if $L$ is ample, $X$ is smooth and the action of $\C^*$ on $(X,L)$ is equalized, then $(X,mL)$ is CPN for $m\gg 0$. 
\end{remark}

\begin{remark}\label{rem:ratnormal}
The CPN hypothesis can be read geometrically as follows. Via the embedding $X\hookrightarrow \P(\HH^0(X,L))$, every $1$-cycle $C$ parametrized by $\CX$ is mapped to a reduced $\C^*$-invariant rational cycle of $L$-degree $\delta$ in $\P(\HH^0(C,L_{|C}))$, on which the $\C^*$-action acts with weights $0,1,\dots,\delta$. In particular, if $C_1,\dots, C_k$ are the irreducible components of $C$, with the notation of the proof of Lemma \ref{lem:univbdl}, from the exact sequence (\ref{eq:irredcomp}) it follows that the restriction map $\HH^0(X,L)\to \HH^0(C_i,L_{|C_i})$ is surjective, and so every component $C_i$ embeds into $\P(\HH^0(C_i,L_{|C_i}))\subset \P(\HH^0(X,L))$ as a rational normal curve. See Figure \ref{fig:CPN}.
\end{remark}

\begin{figure}[h!]
\begin{tikzpicture}
  \node at (0,0) {\includegraphics[width=8cm]{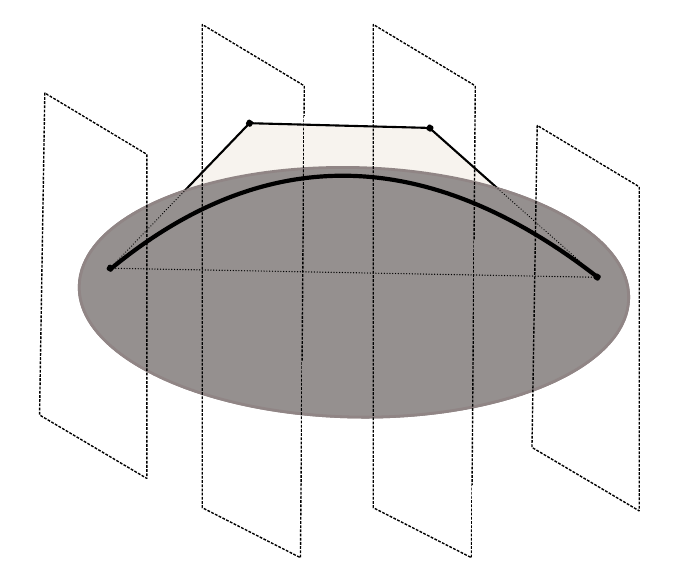}};
  \node at (-0.1,-1.8) {$X$};
  \node at (1.9,2.9) {$\P(\HH^0(X,L)_u)$};
  \node at (-1.3,0.85) {$C$};
  \node at (-0.1,1.62) {$\P(\HH^0(C,L_{|C}))$};
\end{tikzpicture}
\caption{A $\C^*$-invariant curve $C$ is a rational normal curve in its linear span $\P(\HH^0(C,L_{|C}))$, and this space meets each fixed point subspace $\P(\HH^0(X,L)_u)$ at a point.\label{fig:CPN}}
\end{figure}

\begin{lemma}\label{lem:CPNprop}
Let $(X,L)$ be a polarized pair, with $X$ smooth, supporting an equalized $\C^*$-action. If $(X,L)$ is CPN with respect to the $\C^*$-action, then we have an embedding $\imath:\cU\hookrightarrow\P(\cE)$, an isomorphism $\HH^0(\CX,\cE)\simeq \HH^0(X,L)$, and the morphism $q:\cU\to X$  composed with the embedding $X\hookrightarrow \P(\HH^0(X,L))$ factors via the morphism $\P(\cE)\to \P(\HH^0(X,L))$ induced by the evaluation of global sections. In other words, we have a commutative diagram:

$$
\xymatrix{\CX&\cU\ar[d]^q\ar[l]^p\ar@{^(->}+<8pt,0pt>;[r]_{\imath}&\P(\cE)\ar@/_12pt/[ll]\ar[d]\\
&X\ar@{^(->}+<8pt,0pt>;[r]&\P(\HH^0(X,L))}
$$

\end{lemma}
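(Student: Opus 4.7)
The plan is to establish the three assertions in turn: first the embedding $\iota$, then the identification of global sections, and finally the commutative diagram expressing the factorization.

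For the embedding $\iota: \cU \hookrightarrow \P(\cE)$, I would exploit the natural adjunction morphism $p^*\cE = p^*p_*q^*L \to q^*L$. Over a point $c\in \CX$ with fiber $C = p^{-1}(c)$, this is the evaluation map $\HH^0(C,L_{|C}) \otimes \cO_C \to L_{|C}$. Since $L$ is very ample on $X$ (part of the CPN hypothesis) and $C \subset X$ is a closed subscheme, $L_{|C}$ is very ample, hence globally generated; this gives fiberwise, and therefore global, surjectivity of $p^*\cE \to q^*L$, yielding a $\CX$-morphism $\iota: \cU \to \P(\cE)$. On each fiber of $p$, $\iota$ restricts to the very ample embedding $C \hookrightarrow \P(\HH^0(C, L_{|C}))$; since $\cU$ and $\P(\cE)$ are both proper over $\CX$ and $\iota$ is a fiberwise closed embedding over the normal base, it is a closed embedding globally.

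For the isomorphism $\HH^0(\CX, \cE) \simeq \HH^0(X, L)$, the key observation is that $\HH^0(\CX, \cE) = \HH^0(\cU, q^*L)$ by the definition of $\cE$. I would then show $q_*\cO_\cU = \cO_X$: the universal family $\cU$ is irreducible, being flat by Lemma~\ref{lem:flat} over the irreducible $\CX$ with irreducible generic fiber $\simeq \PP^1$, and $q$ is birational because through a general point $x \in X$ passes exactly one orbit closure parametrized by $\CX$, namely $\overline{\C^*\cdot x}$. Applying Zariski's Main Theorem to the proper birational morphism $q$ to the smooth (hence normal) variety $X$ yields $q_*\cO_\cU = \cO_X$, and the projection formula concludes $\HH^0(\cU, q^*L) = \HH^0(X, L \otimes q_*\cO_\cU) = \HH^0(X, L)$.

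The commutative diagram is then obtained as follows. The CPN hypothesis supplies the surjection $\HH^0(X,L) \otimes \cO_\CX \to \cE$, which induces a closed embedding $\P(\cE) \hookrightarrow \P(\HH^0(X, L)) \times \CX$; composing with the second projection provides the morphism $\P(\cE) \to \P(\HH^0(X, L))$. Pulling this surjection back via $p$ and composing with the adjunction $p^*\cE \to q^*L$, one recovers a surjection $\HH^0(X, L) \otimes \cO_\cU \to q^*L$ which coincides with the pullback via $q$ of the evaluation map $\HH^0(X, L) \otimes \cO_X \to L$ defining the embedding $X \hookrightarrow \P(\HH^0(X, L))$. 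This identifies the composite $\cU \xrightarrow{\iota} \P(\cE) \to \P(\HH^0(X, L))$ with $q$ followed by $X \hookrightarrow \P(\HH^0(X, L))$, giving the required commutative square.

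The most delicate point I anticipate is the second step, specifically verifying $q_*\cO_\cU = \cO_X$: conceptually clear, but requiring an explicit argument that through a general $x\in X$ there is a unique orbit closure parametrized by $\CX$ (so that $\psi: V/\C^*\to \CX$ is generically injective on $X$ via $q$), and a clean appeal to Zariski's Main Theorem on the normal variety $X$. Once this is in place, the rest amounts to chasing the adjunction and the CPN surjection.
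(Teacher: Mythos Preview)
Your proposal is correct and follows essentially the same route as the paper: the adjunction $p^*p_*q^*L\to q^*L$ for the embedding $\iota$, the chain $\HH^0(\CX,\cE)\simeq\HH^0(\cU,q^*L)\simeq\HH^0(X,L)$ via Zariski's Main Theorem applied to the birational map $q$ onto the normal variety $X$, and then the factorization via the evaluation surjection. The paper is terser---it does not spell out the surjectivity of the adjunction or the birationality of $q$---but the logic is identical; your worry about the ``most delicate point'' is in fact harmless, since birationality of $q$ is immediate from the construction of $\CX$ as the closure of $V/\C^*$.
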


\begin{proof}
The natural surjection:
$$
p^*\cE=p^*p_*q^*L\to q^*L
$$
provides a morphism $\imath:\cU\to\P(p^*\cE)\to \P(\cE)$, that, composed with the projection $\P(\cE)\to \CX$ is equal to $p$. Since over every $c\in\CX$ the map $\imath$ coincides with the embedding of the corresponding cycle $C$ into $\P(\HH^0(C,L_{|C}))$, we may conclude that $\imath$ is an embedding. 

Moreover $\HH^0(\CX,\cE)\simeq\HH^0(\CX,p_*q^*L)\simeq\HH^0(\cU,q^*L)$; since $q\colon \cU\to X$ is birational and $X$ is normal, by Zariski's Main Theorem $q$ has connected fibers and so $\HH^0(\cU,q^*L)\simeq \HH^0(X,L)$ and the evaluation of global sections induces $\P(\cE)\to \P(\HH^0(X,L))$, and this map obviously commutes with the arrows in the diagram of the statement.
\end{proof}

\begin{corollary}\label{cor:chowgrass}
Let $(X,L)$ be a polarized pair, with $X$ smooth, supporting an equalized $\C^*$-action. Assume that $(X,L)$ is CPN with respect to the $\C^*$-action. Let $\delta$ be the bandwidth of the induced action on $\P(\HH^0(X,L))$. Then we have a  
morphism $$\gamma:\CX\lra  
\G(\delta,\P(\HH^0(X,L))),$$
such that the pullback via $\gamma$ of the Pl\"ucker line bundle is $\det(\cE)=\bigotimes_{u=0}^\delta \cL_u$.
\end{corollary}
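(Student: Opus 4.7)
The plan is to obtain $\gamma$ directly from the universal property of the Grassmannian applied to the surjection of sheaves produced by the CPN hypothesis, and then to read off the pullback of the Pl\"ucker line bundle from the construction itself.

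First, I would recall that by Lemma~\ref{lem:univbdl} the sheaf $\cE=p_*q^*L$ is locally free of rank $\delta+1$, and that by Lemma~\ref{lem:CPNprop} together with the discussion around~(\ref{eq:surjec}) the CPN hypothesis produces a surjection of $\cO_{\CX}$-modules
\[
\HH^0(X,L)\otimes\cO_{\CX}\twoheadrightarrow \cE.
\]
The universal property of the Grassmannian of $(\delta+1)$-dimensional quotients of $\HH^0(X,L)$ --- equivalently, of $\delta$-dimensional projective subspaces of $\P(\HH^0(X,L))$ --- then yields a unique morphism $\gamma\colon \CX\lra \G(\delta,\P(\HH^0(X,L)))$ pulling back the tautological quotient bundle to the above surjection. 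Geometrically, the fiber of $\cE$ over $c\in\CX$ is $\HH^0(C,L_{|C})$ with $C=p^{-1}(c)$, so $\gamma(c)$ is the linear span of $C\subset\P(\HH^0(X,L))$, which has the expected dimension $\delta$ by the CPN hypothesis (cf.\ Remark~\ref{rem:ratnormal} and Figure~\ref{fig:CPN}).

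For the Pl\"ucker identification, I would invoke that the Pl\"ucker line bundle on $\G(\delta,\P(\HH^0(X,L)))$ is by definition the determinant of the tautological rank $(\delta+1)$ quotient bundle, so its pullback by $\gamma$ is $\det\cE$. Since the $\C^*$-linearization on $\cE$ splits it as $\cE=\bigoplus_{u=0}^{\delta}\cL_u$ into line bundles, taking determinants gives $\det\cE=\bigotimes_{u=0}^{\delta}\cL_u$, which is the formula asserted in the statement. I do not foresee any real obstacle: the whole argument reduces to invoking the universal property once CPN has secured the surjectivity of evaluation, with the rank of $\cE$ already pinned down by Lemma~\ref{lem:univbdl}; the only minor check is the compatibility of the weight decomposition of $\cE$ with taking determinants, which is immediate.
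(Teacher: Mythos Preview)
Your proposal is correct and follows essentially the same approach as the paper: global generation of the rank $\delta+1$ bundle $\cE$ by $\HH^0(X,L)$ (guaranteed by CPN) yields $\gamma$ via the universal property of the Grassmannian, and the Pl\"ucker pullback is then $\det\cE=\bigotimes_u\cL_u$. The paper's proof is just a slightly terser version of what you wrote.
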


\begin{proof}
This follows from the fact that, by Lemma \ref{lem:univbdl}, the vector bundle $\cE$ has rank $\delta+1$ and it is globally generated by $\HH^0(X,L)$. In particular, the composition of  $\gamma$ with the Pl\"ucker embedding of $\G(\delta,\P(\HH^0(X,L))$ into $\P\big(\bigwedge^{\delta+1}\HH^0(X,L)\big)$ is the morphism associated with the line bundle $\det (\cE)$.
\end{proof}

\begin{remark}\label{rem:Sigma}
Note that the image of $\gamma$ is contained in the Segre variety $\Sigma(\HH^0(X,L))$. In fact, under our assumptions, for every $u\in\{0,1,\dots,\delta\}$ we have surjective maps $\HH^0(X,L)_u\otimes \cO_{\CX}\to \cL_u$. Geometrically this means that for every point $c\in\CX$, the projective space $\P(\cE_c)$ is the linear span of the points $\P(\cL_{u,c})\in\P(\HH^0(X,L)_u)$. Then the claim follows by the definition of $\Sigma(\HH^0(X,L))$ -- see Equation (\ref{eq:Sigma}). 
\end{remark}

\begin{proposition}\label{prop:normal}
In the situation of Corollary \ref{cor:chowgrass}, the map $\gamma$ is the normalization of its image.
\end{proposition}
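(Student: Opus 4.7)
The plan is to verify that $\gamma$ is finite and birational onto its image, so that the normality of $\CX$ (by construction) together with the universal property of normalization yields that $\gamma$ is the normalization morphism of $\gamma(\CX)$.

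For \emph{finiteness}, since $\CX$ is projective the morphism $\gamma$ is proper, so it suffices to check that every fiber of $\gamma$ is finite. Given $\Lambda$ in the image and $c\in\gamma^{-1}(\Lambda)$, Lemma \ref{lem:CPNprop} together with Remark \ref{rem:ratnormal} identifies the cycle $C=p^{-1}(c)$ with a chain of rational normal curves embedded in $\Lambda\cap X$ of total $L$-degree $\delta$. In particular the support of $C$ is contained in the union of the finitely many one-dimensional irreducible components of the intersection $\Lambda\cap X$, each of bounded $L$-degree; only finitely many $1$-cycles of total degree $\delta$ can be supported on such a union.

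For \emph{birationality}, pick a general $c_0 \in \CX$, so that $C_0=p^{-1}(c_0)$ is the closure of a single $\C^*$-orbit through a generic point $x_0\in X$, embedded by Remark \ref{rem:ratnormal} as a rational normal curve of degree $\delta$ in $\Lambda_0=\gamma(c_0)$. By the construction of $\CX$ as the normalization of the closure of $V/\C^*$ in $\Chow(X)$, distinct points of $\CX$ correspond generically to distinct orbit closures; since the closure of the orbit through $x_0$ is determined by $x_0$, any cycle $C'$ with $\gamma(c')=\Lambda_0$ and $c'\ne c_0$ must be disjoint from the open orbit of $C_0$. Letting $c_0$ vary, the existence of such a $c'$ would produce, after a suitable \'etale base change, a rational section of $\gamma$ distinct from the identity, and hence a second algebraic family of cycles sweeping out $X$ whose generic member misses the generic orbit parametrized by $\CX$; this contradicts the fact that $\CX$ parametrizes each general $\C^*$-orbit of $X$ exactly once.

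Putting the two steps together, $\gamma$ is a proper, finite, birational morphism from the normal variety $\CX$ onto $\gamma(\CX)\subset \G(\delta,\P(\HH^0(X,L)))$, and so it is the normalization map of its image. The main obstacle is the birationality step: rigorously excluding $\deg(\gamma)\ge 2$ requires carefully exploiting the defining property of $\CX$ as a parameter space of $\C^*$-invariant cycles in bijection with the general orbits of the action. One can alternatively approach this via the Hilbert quotient of Remark \ref{rem:Hilb}, factoring $\gamma$ through the morphism $\CX\to\Hilb(X)$ (already known to be the normalization of its image) followed by the natural map that sends a $1$-cycle to its linear span, and then using the CPN hypothesis to argue that the second map is generically injective on the image of $\CX$.
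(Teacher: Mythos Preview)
Your overall framework (finite $+$ birational $+$ normal source $=$ normalization) is the right one, but the birationality step contains a genuine gap that you yourself flag: the paragraph arguing that $\deg(\gamma)\geq 2$ would yield ``a rational section of $\gamma$ distinct from the identity'' and hence ``a second algebraic family of cycles sweeping out $X$ whose generic member misses the generic orbit'' is not a proof. You have not explained how this section is built, nor why its existence contradicts anything; a finite morphism of degree $\geq 2$ does not automatically admit a second rational section, and even if it did, the resulting family of cycles would still sweep out $X$ and contain the general orbit somewhere. The alternative route through $\Hilb(X)$ is only sketched and ends at exactly the same unproved assertion (generic injectivity of the linear-span map under CPN).

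The paper avoids this entirely by proving something stronger: the induced map $\overline{\CX}\to\Sigma(V)$ is \emph{injective}, not just generically so, which gives finiteness and birationality in one stroke. The key point, which you are missing, is that the linear span $\Lambda=\gamma(c)$ records the points $P_u:=\Lambda\cap\P(\HH^0(X,L)_u)$ for every weight $u=0,\dots,\delta$, and these points determine the cycle $C$ component by component. Indeed, writing the first component $C_1$ in local coordinates near its sink $P_0$ as $t\mapsto\sum_u t^u P_u$, one sees that the tangent line to $C_1$ at $P_0$ is $\langle P_0,P_1\rangle$; since by the Bia{\l}ynicki-Birula decomposition a $1$-dimensional orbit is determined by its tangent direction at the sink, $C_1$ is recovered from $P_0,P_1$. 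Iterating from the source of $C_1$ recovers $C_2$, and so on. Your finiteness argument is also shaky as written: nothing prevents $\Lambda\cap X$ from having components of dimension $\geq 2$, in which case ``only finitely many $1$-cycles of degree $\delta$ are supported there'' is false. The paper's injectivity argument makes this issue disappear.
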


\begin{proof}
Since $\CX$ is the normalization of the Chow quotient   $\overline{\CX}\subset\Chow(X)$, it is enough to show that the morphism $\gamma$ factors via an injective map from $\overline{\CX}$ to $\Sigma(\HH^0(X,L))$. In other words, we need to prove that different $\C^*$-invariant cycles (corresponding to different points in $\overline{\CX}$) have different linear spans.

Let $C,C'\in\overline{\CX}$ be two $\C^*$-invariant cycles, and denote by $C_1,\dots ,C_t
$, $C'_1,\dots ,C'_{t'}$ their irreducible components, ordered by the weights of their extremal fixed points as in the proof of Lemma \ref{lem:univbdl}. Let $P_u, P'_u\in\P(\HH^0(X,L)_u)$ be the intersections of the linear spans of $C,C'$ with $\P(\HH^0(X,L)_u)$, for every weight $u=0,1,\dots,\delta$.

Assume now that the linear spans of $C$, $C'$ coincide; equivalently,  $P_u=P'_u$ for every $u$. Note that the first irreducible component $C_1$ of $C$ can be parametrized as $\sum_{u=0}^{k}t^uP_u$, $t\in\C$, in a neighborhood of $P_0$; in particular, its tangent line at $P_0$ is $\langle P_0,P_1\rangle$. Since the same happens for $C'_1$, and every $1$-dimensional orbit is completely determined by the tangent direction  at the sink, by the Bia{\l}ynicki-Birula theorem, the fact that $\langle P_0,P_1\rangle=\langle P'_0,P'_1\rangle$ tells us that $C_1=C'_1$. Recursively, we prove that $C_i=C'_i$ for every $i$, that is $C=C'$.
\end{proof}


\section{Proof of the main statement}\label{sec:main}

This section is devoted to the proof of Theorem \ref{thm:main}. First of all we will show the existence of morphisms among the normalized Chow quotients $\CX_{i,j}$ fitting in the commutative diagram (\ref{fig:Chow0}), then we will prove (Proposition \ref{prop:blowup}) that the morphisms in the diagram are blowups, whose centers are described in Remark \ref{rem:center}. We finish the section discussing the smoothness of $\CX$. 

Throughout the section $(X,L)$ will  be a polarized pair, with $X$ smooth, endowed with an equalized $\C^*$-action of criticality $r$ such that  $\codim(B^\pm(Y),X)>1$ for every inner fixed point component $Y\in\cY^\circ$. 
For every small $\QQ$-factorial modification $X_{i,j}$ as in Proposition \ref{prop:WORS3} we choose a $\Q$-divisor $L_{i,j}=L(\tau_{i,j}^-,\tau_{i,j}^+) \in\Int(N_{i,j})$, which is  ample on $X_{i,j}$.

\begin{lemma}\label{lem:manyCPNs}
There exists an integer $m$ such that $(X,mL)$ and every $(X_{i,j},mL_{i,j})$ are CPN  with respect to the $\C^*$-action. Moreover, for every $0 \le i \le j\le r$ we have $\C^*$-equivariant isomorphisms:
$$
\HH^0(X_{i,j},mL_{i,j})\simeq \HH^0(X,mL)_{m\tau_{i,j}^-,m\tau_{i,j}^+}.
$$  
\end{lemma}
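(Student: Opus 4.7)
The plan is to establish the two claims of the lemma in sequence, relying entirely on constructions from earlier in the paper. For the CPN assertion, the key input is Remark~\ref{rem:ratnormal0}, which guarantees that for any polarized pair on a smooth variety with an equalized $\C^*$-action, sufficiently divisible multiples of the polarization are CPN. By Proposition~\ref{prop:WORS3}, each pruning $X_{i,j}$ is smooth, $L_{i,j}$ is ample on $X_{i,j}$, and the $\C^*$-action on $X_{i,j}$ is equalized: it is obtained from the action on $X^\flat$ by a small $\Q$-factorial modification, and the action on $X^\flat$ is equalized because equalization is preserved under the $\C^*$-equivariant blowup $\beta\colon X^\flat\to X$ along the sink and source. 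Since the collection of pairs $(i,j)$ with $0\le i\le j\le r$ is finite, we may take $m$ to be a common multiple of the integers produced by Remark~\ref{rem:ratnormal0} for each of these polarized pairs, also divisible by $d$ from Definition~\ref{def:pruning}.

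For the isomorphism, the approach is to unpack the definition of the pruning. By the choice of $L_{i,j}=L(\tau_{i,j}^-,\tau_{i,j}^+)$ with $\tau_{i,j}^\pm$ in $\Int(N_{i,j})$, Proposition~\ref{prop:WORS3} gives the identification $X_{i,j}=X(\tau_{i,j}^-,\tau_{i,j}^+)=\Proj A(\tau_{i,j}^-,\tau_{i,j}^+)$. For $m$ sufficiently divisible, the graded piece $A(\tau_{i,j}^-,\tau_{i,j}^+)_m$ coincides with $\HH^0(X_{i,j},mL_{i,j})$ via the $\Proj$ correspondence on the normal projective variety $X_{i,j}$ with ample polarization $L_{i,j}$. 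Substituting the explicit description of $A(\tau_-,\tau_+)$ from Definition~\ref{def:pruning} then yields
\begin{equation*}
\HH^0(X_{i,j},mL_{i,j})\simeq\bigoplus_{k\in[m\tau_{i,j}^-,\,m\tau_{i,j}^+]}\HH^0(X,mL)_k=\HH^0(X,mL)_{m\tau_{i,j}^-,\,m\tau_{i,j}^+},
\end{equation*}
where the last subscript follows the weight-range convention introduced in Section~\ref{ssec:grass}.

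The only delicate point is the $\C^*$-equivariance of this isomorphism. This is essentially built in, because the right-hand side is by definition a direct sum of $\C^*$-weight spaces of $\HH^0(X,mL)$, while the $\C^*$-action on $\HH^0(X_{i,j},mL_{i,j})$ induced by the action on $X_{i,j}$ matches that weight decomposition through the identification with $A(\tau_{i,j}^-,\tau_{i,j}^+)_m$. Beyond verifying this compatibility, the proof is a bookkeeping exercise assembling Remark~\ref{rem:ratnormal0}, Proposition~\ref{prop:WORS3}, and the explicit description of the pruning section rings; the main obstacle is merely making sure that a single $m$ can be taken uniformly for all pairs $(i,j)$, which is immediate from finiteness of the indexing set.
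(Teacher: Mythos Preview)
Your proof is correct and follows essentially the same route as the paper's. For the CPN claim both you and the paper invoke Remark~\ref{rem:ratnormal0}; you add the (valid) observation that the finitely many indices $(i,j)$ allow a uniform $m$. For the isomorphism of section spaces, the paper makes the two-step chain explicit---first $\HH^0(X_{i,j},mL_{i,j})\simeq\HH^0(X^\flat,mL_{i,j})$ via the small modification $\varphi_{i,j}$, then $\HH^0(X^\flat,mL_{i,j})\simeq\HH^0(X,mL)_{m\tau_{i,j}^-,m\tau_{i,j}^+}$ via \cite[Lemma~2.5]{WORS3}---whereas you package both steps into a single ``$\Proj$ correspondence'' identifying $A(\tau_{i,j}^-,\tau_{i,j}^+)_m$ with $\HH^0(X_{i,j},mL_{i,j})$; this is legitimate because the displayed equality $A(\tau_-,\tau_+)=\bigoplus_m\HH^0(X^\flat,mL(\tau_-,\tau_+))$ before Remark~\ref{rem:Xflat} already identifies $L_{i,j}$ with the $\cO(1)$ of $\Proj A$, but it would be cleaner to say so.
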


\begin{proof}
The first part of the statement follows from Remark \ref{rem:ratnormal0}. Moreover, since $X_{i,j}$ and $X^\flat$ are isomorphic in codimension one, for $m$ large enough and divisible, we have $\C^*$-equivariant isomorphisms $\HH^0(X_{i,j},mL_{i,j})\to \HH^0(X^\flat,mL_{i,j})$. Then we conclude the proof observing that,  by \cite[Lemma~2.5]{WORS3}, we have $\C^*$-equivariant isomorphisms $\HH^0(X^\flat,mL_{i,j})\simeq $ $\HH^0(X,mL)_{m\tau_{i,j}^-,m\tau_{i,j}^+}$.
\end{proof}

As a consequence of  Lemma \ref{lem:manyCPNs}, replacing $L$ with a sufficiently large and divisible multiple, we may work, without loss of generality, in the following: 

\begin{setup}\label{set:CPNs}
 $(X,L)$ is a polarized pair, with $X$ smooth, endowed with an equalized $\C^*$-action of criticality $r$ such that  $\codim(B^\pm(Y),X)>1$ for every inner fixed point component $Y\in\cY^\circ$, $L_{i,j} \in \Pic(X_{i,j})$ ample, for $0 \le i \le j \le r$ and
the polarized pairs  $(X,L)$ and $(X_{i,j},L_{i,j})$ are CPN with respect to the $\C^*$-action.
\end{setup}
In particular, for $0 \le i \le j \le r$, we have embeddings
$X_{i,j}\hooklongrightarrow \P(\HH^0(X_{i,j},L_{i,j}))$ 
which fit in the commutative diagram:
$$
\xymatrix@C=15mm@R=8mm{X\ar@{^{(}->}+<10pt,0pt>;[r]\ar@{-->}[d]&\P(\HH^0(X,L))\ar@{-->}[d]^{\phi_{i,j}}\\
X_{i,j}\ar@{^{(}->}+<12pt,0pt>;[r]&\P(\HH^0(X_{i,j},L_{i,j}))}
$$
In this diagram $\phi_{i,j}$ denotes the linear projection induced by the monomorphism:
$$
\HH^0(X_{i,j},L_{i,j})\simeq \HH^0(X,L)_{\tau_{i,j}^-,\tau_{i,j}^+}\hooklongrightarrow \HH^0(X,L),
$$
and the left-hand-side vertical map denotes the composition of the blowup map and the pruning map (see Section \ref{sec:prun}):
$$\xymatrix@C=15mm{X\ar@{-->}[r]^{\beta^{-1}}&X^\flat\ar@{-->}[r]^{\varphi_{i,j}}&X_{i,j}}$$
Abusing notation, we will denote this composition by $\phi_{i,j}$, as well.
Let us set:
$$
V:=\HH^0(X,L),\qquad V_{i,j}:=\HH^0(X_{i,j},L_{i,j});
$$
we will use the notation introduced in Section \ref{ssec:grass}, regarding Grassmannians of linear spaces in $\P(V)$ and $\P(V_{i,j})$, and we will consider the normalized Chow quotients $\CX_{i,j}$ of the varieties $X_{i,j}$. Note first that:
\[\CX_{i,i+1}\simeq \GX_{i,i+1},\quad \mbox{for every }i=0,\dots,r-1.\]  
Moreover, by Corollary \ref{cor:chowgrass}, for every $i<j$ we have a morphism 
$$
\gamma_{i,j}:\CX_{i,j}\to \G(\delta_{i,j},\P(V_{i,j})),
$$
where $\delta_{i,j}$ denotes the bandwidth of the induced $\C^*$-action on $(X_{i,j}, L_{i,j})$. The image of the above map is contained in the Segre variety $\Sigma(V_{i,j})$ by Remark \ref{rem:Sigma}. The next Lemma shows that the maps $\gamma$, $\gamma_{i,j}$ are compatible with the projection $\pi_{i,j}\colon \Sigma(V)\to \Sigma(V_{i,j})$ induced by the projection $\phi_{i,j}$ (see Section \ref{ssec:grass}) and the natural map sending invariant cycles in $X$ to invariant cycles in $\CX_{i,j}$; moreover, we show that this map is a morphism.

\begin{lemma}\label{lem:projC}
Let $(X,L)$ be as in Setup \ref{set:CPNs}. For  $0 \le i \le j \le r$, the birational map $\phi_{i,j}:X\dashrightarrow X_{i,j}$ induces a morphism $\rho_{i,j}:\CX\to \CX_{i,j}$ fitting in the commutative diagram:
$$
\xymatrix{\CX\ar[rr]\ar[d]_{\rho_{i,j}}&&\Sigma(V)\ar[d]^{\pi_{i,j}}\\
\CX_{i,j}\ar[rr]&&\Sigma(V_{i,j})
}
$$
where $\pi_{i,j}$ denotes the projection between Segre varieties induced by $\varphi_{i,j}$.
\end{lemma}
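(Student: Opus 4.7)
\textbf{Proof plan for Lemma \ref{lem:projC}.} The strategy is to obtain $\rho_{i,j}$ by passing through the Grassmannian picture, rather than by attempting to extend the obvious rational map on general orbits directly. Specifically, I would build the composition $\pi_{i,j}\circ\gamma\colon \CX\to\Sigma(V_{i,j})$ using the everywhere-defined morphism $\pi_{i,j}\colon\Sigma(V)\to\Sigma(V_{i,j})$ from Section \ref{ssec:grass}, and then descend it along the normalization $\gamma_{i,j}\colon\CX_{i,j}\to\Sigma(V_{i,j})$ of Proposition \ref{prop:normal}.

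First, by Corollary \ref{cor:chowgrass} and Remark \ref{rem:Sigma}, both $\gamma$ and $\gamma_{i,j}$ factor through the Segre subvarieties $\Sigma(V)\subset\G(\delta,\P(V))$ and $\Sigma(V_{i,j})\subset\G(\delta_{i,j},\P(V_{i,j}))$ respectively. Since $\pi_{i,j}$ is defined on all of $\Sigma(V)$, the composition $\pi_{i,j}\circ\gamma$ is a morphism $\CX\to\Sigma(V_{i,j})$. The core step is then to check that its image lies in the closed subvariety $\gamma_{i,j}(\CX_{i,j})\subset\Sigma(V_{i,j})$. For this I would work on the dense open subset $U\subset\CX$ parametrizing closures $C$ of general orbits of the action: since $\phi_{i,j}$ is a $\C^*$-equivariant birational map, for a general such $C$ the image $\phi_{i,j}(C)$ is well defined and equals the closure of a general orbit of the action on $X_{i,j}$, hence corresponds to a point in the analogous open subset $U_{i,j}\subset\CX_{i,j}$. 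Under the embedding $X\hookrightarrow\P(V)$ the curve $C$ is a rational normal curve of degree $\delta$ (by the CPN hypothesis and Remark \ref{rem:ratnormal}), and the linear projection $\P(V)\dashrightarrow\P(V_{i,j})$ induced by the inclusion $V_{i,j}\subset V$ sends $C$ onto the rational normal curve $\phi_{i,j}(C)\subset \P(V_{i,j})$; therefore $\pi_{i,j}\bigl(\langle C\rangle\bigr)=\langle \phi_{i,j}(C)\rangle=\gamma_{i,j}(\rho^\circ_{i,j}(c))$, where $\rho^\circ_{i,j}\colon U\to U_{i,j}$ is the birational identification induced by $\phi_{i,j}$. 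Thus $\pi_{i,j}\circ\gamma$ takes $U$ into $\gamma_{i,j}(\CX_{i,j})$, and by irreducibility of $\CX$ and closedness of $\gamma_{i,j}(\CX_{i,j})\subset\Sigma(V_{i,j})$, the same holds on all of $\CX$.

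With this containment established, the existence of $\rho_{i,j}$ follows from the universal property of the normalization: Proposition \ref{prop:normal} says that $\gamma_{i,j}\colon\CX_{i,j}\to\gamma_{i,j}(\CX_{i,j})$ is the normalization morphism, and $\CX$ is normal, so the map $\CX\to\gamma_{i,j}(\CX_{i,j})$ induced by $\pi_{i,j}\circ\gamma$ lifts uniquely to a morphism $\rho_{i,j}\colon\CX\to\CX_{i,j}$ satisfying $\gamma_{i,j}\circ\rho_{i,j}=\pi_{i,j}\circ\gamma$; this is exactly the commutativity of the diagram in the statement.

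The main obstacle I anticipate is the verification that $\pi_{i,j}\bigl(\langle C\rangle\bigr)=\langle\phi_{i,j}(C)\rangle$ on the open set $U$: the linear projection $\P(V)\dashrightarrow\P(V_{i,j})$ has an indeterminacy locus, so one must check that a general orbit closure is disjoint from it and that the projected span has the expected dimension $\delta_{i,j}+1$. This is controlled by the fact that $\langle C\rangle$ meets each weight space $\P(V_u)$ in a single point (Remark \ref{rem:Sigma}), so that the projection on the weight spaces indexed by $u\in[m\tau^-_{i,j},m\tau^+_{i,j}]$ realizes $\pi_{i,j}$ on $\langle C\rangle$ as a linear isomorphism onto its image, which coincides with the span of the projected rational normal curve $\phi_{i,j}(C)$.
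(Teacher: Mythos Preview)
Your proposal is correct and follows essentially the same route as the paper: form the composition $\pi_{i,j}\circ\gamma\colon\CX\to\Sigma(V_{i,j})$, verify on the open locus of general orbit closures that the image lands in $\gamma_{i,j}(\CX_{i,j})$, and then lift along the normalization $\gamma_{i,j}$ using Proposition~\ref{prop:normal}. The paper's proof is more terse---it dispatches the image-containment step in a single sentence (``the projection via $\phi_{i,j}$ of a general irreducible $\C^*$-invariant $1$-cycle in $X$ is an irreducible $\C^*$-invariant $1$-cycle in $X_{i,j}$'')---whereas you spell out the span computation and the indeterminacy issue, but the underlying argument is the same.
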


\begin{proof}
Let us consider the composition $\pi_{i,j}\circ\gamma:\CX\to \Sigma(V_{i,j})$. Since the projection via $\phi_{i,j}$ of a general --irreducible-- $\C^*$-invariant $1$-cycle in $X$ is an irreducible $\C^*$-invariant $1$-cycle in $X_{i,j}$, it follows that $\pi_{i,j}(\gamma(\CX))$ is contained in $\gamma_{i,j}(\CX_{i,j})$. By Proposition \ref{prop:normal} $\CX_{i,j}$ is the normalization of its image via $\gamma_{i,j}$, thus the map $\pi_{i,j}\circ\gamma$ factors via $\CX_{i,j}$, as stated.
\end{proof}

\begin{remark}\label{rem:projC1}
Denoting by $\delta_{i,j}$ the bandwidth of the induced $\C^*$-action on $(X_{i,j}, L_{i,j})$, the restriction to $\Sigma(V_{i,j})$ of the Pl\"ucker embedding of the Grassmannian $\G(\delta_{i,j},\HH^0(X_{i,j},L_{i,j}))$  is the Segre embedding of that variety, identified with $\prod_u \P(\HH^0(X,L)_{u}) $ where $u=\tau_{i,j}^-, \dots, \tau_{i,j}^+$. Moreover,  
$\pi_{i,j}:\Sigma(V)\to \Sigma(V_{i,j})$ can be identified with the canonical projection 
\[\textstyle \prod_{u=0}^{\delta}\P(\HH^0(X,L)_{u}) \to \prod_{u=\tau_{i,j}^-}^{\tau_{i,j}^+}\P(\HH^0(X,L)_{u}).\] Together with Lemma \ref{lem:projC} and Remark \ref{rem:Sigma}, this implies that the morphism $\pi_{i,j}\circ\gamma:\CX\to \Sigma(V_{i,j})$, and subsequently its normalization $\rho_{i,j}:\CX\to \CX_{i,j}$,  is given by the line bundle $\bigotimes_u 
{\cL_u}$, where $u=\tau_{i,j}^-, \dots, \tau_{i,j}^+$. 
\end{remark}

\begin{remark}\label{rem:projC2} Note that, by Remark \ref{rem:chblow}, $\CX=\CX^\flat$, and that $X_{0,r}=X^\flat$ by definition (see proof of Proposition \ref{prop:WORS3}), so $\CX=\CX_{0,r}$. Moreover  the above argument can be applied to any modification $X_{i,j}$, obtaining morphisms 
$$\rho^{i,j}_{k,\ell}:\CX_{i,j}\lra\CX_{k,\ell},$$ 
whenever $i\leq k\le \ell\leq j$. Abusing notation, we will call these morphisms {\em pruning maps}, and denote 
$$
\prul:=\rho^{i,j}_{i,j-1},\qquad \prur:=\rho^{i,j}_{i+1,j}
$$
(called {\em elementary pruning maps}). As in Remark \ref{rem:GITblowup}, $s$ and $d$ commute, so every pruning map $\rho^{i,j}_{k,\ell}$ can be written as $\rho^{i,j}_{k,\ell}=\prul^{i-k}\circ \prur^{j-\ell}$.

\begin{equation}\label{fig:Chow}
\begin{tikzcd}[
  column sep={2.9em,between origins},
  row sep={3.2em,between origins}]
&&&&&\CX=\CX_{0,r}\arrow[rd,"\prur"] \arrow[dl,"\prul",labels=above left] &&&&&\\
&&&&\CX_{0,r-1}\arrow[rd,"\prur"] \arrow[dl,"\prul",labels=above left]&&\CX_{1,r}\arrow[rd,"\prur"] \arrow[dl,"\prul",labels=above left]&&&&\\
&&&\CX_{0,r-2} &&\CX_{1,r-1}&&\CX_{2,r}&&&\\
&& \iddots&\ddots& \iddots&\dots&\ddots&\iddots&\ddots&&\\
&\GX_{0,1}\arrow[rd] \arrow[dl]&&\GX_{1,2} \arrow[dl]&&\dots&&\GX_{r-2,r-1}\arrow[rd] &&\GX_{r-1,r}\arrow[rd] \arrow[dl]&\\
\GX_{0,0}&&\GX_{1,1}&&\dots&&\dots&&\GX_{r-1,r-1}&&\GX_{r,r}
\end{tikzcd}
\end{equation}
Then these morphisms fit into the commutative diagram (\ref{fig:Chow}), which extends Diagram (\ref{eq:GITquot}).
\end{remark}

\begin{proposition}\label{prop:blowup}
Let $(X,L)$ be as in Setup \ref{set:CPNs}. Then for every $i<j$ the maps:
$$\xymatrix@C=5mm@R=7mm{&\CX_{i,j}\ar[ld]_{\prul}\ar[rd]^{\prur}&\\\CX_{i,j-1}&&\CX_{i+1,j}}$$ 
are normalizations of blowups.
\end{proposition}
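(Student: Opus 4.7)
By symmetry we only treat $\prur\colon\CX_{i,j}\to\CX_{i+1,j}$; the argument for $\prul$ is identical after swapping the roles of sink and source. The plan is to identify an explicit subscheme $Z\subset\CX_{i+1,j}$, factor $\prur$ through the blowup $\Bl_Z(\CX_{i+1,j})$ via its universal property, and verify that the induced birational morphism $\CX_{i,j}\to\Bl_Z(\CX_{i+1,j})$ is bijective, so that normality of $\CX_{i,j}$ exhibits it as the normalization.

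I would first translate $\prur$ into the Grassmannian language of Section~\ref{sec:Chow}. By Corollary~\ref{cor:chowgrass} and Remark~\ref{rem:projC1}, $\prur$ is induced by the sub-bundle $\bigoplus_{u\ge\tau^-_{i+1,j}}\cL_u\subset\cE_{i,j}$ and, under the Segre embeddings $\gamma_{i,j},\gamma_{i+1,j}$, it is the restriction of the projection $\Sigma(V_{i,j})\to\Sigma(V_{i+1,j})$ that forgets the lowest factor $\P(V_{\tau^-_{i,j}})$. Equivalently, every $[C]\in\CX_{i,j}$ is encoded by the weight-ordered points $P_u\in\P(V_u)$ of Remark~\ref{rem:ratnormal}, and $\prur$ simply drops $P_{\tau^-_{i,j}}$.

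Next, I would combine this with the explicit description of the elementary pruning $X_{i,j}\dashrightarrow X_{i+1,j}$ in Diagram~(\ref{eq:blowups}), namely the smooth blowup $X^\prur_{i,j}\to X_{i,j}$ along the strict transform of $B^+_{i+1}$ followed by a smooth blowdown onto $X_{i+1,j}$, to pin down the center. A cycle $[C']\in\CX_{i+1,j}$ admits a positive-dimensional family of lifts to $\CX_{i,j}$ precisely when its sink lies in $V^-(Y_{i+1})\subset\GX_{i+1,i+2}$, the sink of $X_{i+1,j}$ (by Remark~\ref{rem:GITblowup}). This defines the candidate center $Z\subset\CX_{i+1,j}$; outside $Z$ the morphism $\prur$ is an isomorphism.

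The technical heart of the argument is to verify that $\prur^{-1}\cI_Z\cdot\cO_{\CX_{i,j}}$ is invertible. The polarizing line bundle on $\CX_{i,j}$ decomposes as $\det\cE_{i,j}=\bigotimes_u\cL_u$, while the line bundle pulled back by $\prur$ from $\CX_{i+1,j}$ is $\bigotimes_{u\ge\tau^-_{i+1,j}}\cL_u$, so that $\cL_{\tau^-_{i,j}}$ captures precisely the information forgotten by $\prur$. A local calculation at a generic point of $\prur^{-1}(Z)$, relying on the flatness of Lemma~\ref{lem:flat} together with the blowup-blowdown structure of $X^\prur_{i,j}$, should identify the scheme-theoretic preimage of $Z$ with a Cartier divisor, namely the exceptional divisor of $\prur$. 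By the universal property of blowups, this yields a factorization $\CX_{i,j}\to\Bl_Z(\CX_{i+1,j})\to\CX_{i+1,j}$, and bijectivity of the first arrow follows fibrewise: the fibers of $\prur$ over $Z$ are projective spaces parametrizing the dropped datum $P_{\tau^-_{i,j}}$, matching the projectivized normal directions to $Z$ inside $\CX_{i+1,j}$. The principal obstacle will be the local invertibility of $\prur^{-1}\cI_Z$, where the compatibility between the universal-bundle decomposition on $\CX_{i,j}$ and the geometric center $Z\subset\CX_{i+1,j}$ must be established most carefully.
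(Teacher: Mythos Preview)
Your proposal has the right geometric intuition but leaves the decisive step unproved, and you yourself flag it: showing that $\prur^{-1}\cI_Z\cdot\cO_{\CX_{i,j}}$ is invertible. Saying that ``a local calculation at a generic point of $\prur^{-1}(Z)$ \ldots\ should identify the scheme-theoretic preimage of $Z$ with a Cartier divisor'' is not enough; Cartierness at a generic point is automatic on a normal variety, and what is needed is Cartierness everywhere, including at the most degenerate cycles. Likewise, your bijectivity argument assumes that the fibres of $\Bl_Z(\CX_{i+1,j})\to\CX_{i+1,j}$ over $Z$ are projective spaces of the expected dimension, but since $\CX_{i+1,j}$ can be singular (Example~\ref{ex:BRUS}) and $Z$ need not be a local complete intersection there, this is not granted. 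So as written the proposal is a plan rather than a proof, and the ``principal obstacle'' you name is exactly where the argument would fail without further input.

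The paper circumvents both difficulties by an inductive reduction that you do not attempt. First, Lemma~\ref{lem:blowup1} settles the criticality-two case directly: there $\CX_{i,i+2}$ is identified with the sink $Y^\prur$ of the auxiliary blowup $X^\prur_{i,i+2}$, which is a \emph{smooth} blowup of $\GX_{i,i+1}$ along $V^+(Y_{i+1})$; in particular the exceptional locus is manifestly Cartier. Second, Lemma~\ref{lem:blowup2} shows that each rhombus in Diagram~(\ref{fig:Chow}) is a normalized Cartesian square, so that $\prur:\CX_{i,j}\to\CX_{i+1,j}$ is the normalized pullback of $\prur:\CX_{i,i+2}\to\GX_{i+1,i+2}$. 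The Cartier condition then comes for free by pulling back the exceptional divisor from the base case, and the blowup structure follows from the standard fact that a fibre product of blowups is again a blowup (\cite[Proposition~IV-21, Lemma~IV-41]{EH00}). This recursive structure is what makes the argument go through without any delicate local analysis on the possibly singular $\CX_{i+1,j}$.
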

The proof will be done in three steps. First of all, we will consider the case of an action of criticality $r=2$:

\begin{lemma}\label{lem:blowup1}
In the situation of Proposition \ref{prop:blowup}, assume furthermore that $r=2$. Then $\prul:\CX_{0,2}
\to\CX_{0,1}=\GX_{0,1}$,  $\prur:\CX_{0,2}
\to\CX_{1,2}=\GX_{1,2}$ are blowups along smooth centers which are the exceptional loci of the birational maps $\GX_{0,1} \to \GX_{1,1}$, $\GX_{1,2} \to \GX_{1,1}$.
\end{lemma}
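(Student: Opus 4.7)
My approach is to identify the normalized Chow quotient $\CX=\CX_{0,2}$ with the sink $Y^{\prur}$ of the blowup $b_{\prur}:X^{\prur}_{0,2}\to X_{0,2}$ introduced in the proof of Proposition \ref{prop:WORS3}, via a natural morphism $\eta:Y^{\prur}\to \CX$ which will turn out to be an isomorphism. By Remark \ref{rem:GITblowup} applied to $(i,j)=(0,2)$, $Y^{\prur}$ is the smooth blowup of $\GX_{0,1}$ along $V^+(Y_1)$; hence, once we know that under $\eta$ the pruning map $\prul$ corresponds to $b_{\prur}|_{Y^{\prur}}:Y^{\prur}\to\GX_{0,1}$, we obtain that $\prul$ is the smooth blowup of $\GX_{0,1}$ along $V^+(Y_1)$. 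Since $V^+(Y_1)$ is precisely the exceptional locus of the contraction $\GX_{0,1}\to\GX_{1,1}$, this yields the statement for $\prul$; the symmetric argument using the source $Y^{\prul}$ of $X^{\prul}_{0,2}$ (which is the smooth blowup of $\GX_{1,2}$ along $V^-(Y_1)$) yields the statement for $\prur$.

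The first step is to describe cycles in $\CX$ explicitly. By Lemma \ref{lem:reduced} combined with Lemma \ref{lem:AMFM}, and using that $\cY^\circ=\{Y_1\}$, every cycle parametrized by $\CX$ is either a single orbit closure of $L$-degree $\delta$ from $Y_0$ to $Y_2$ missing $Y_1$, or a two-component chain consisting of an orbit closure of degree $a_1$ from $Y_0$ to some $p\in Y_1$ joined at $p$ to an orbit closure of degree $\delta-a_1$ from $p$ to $Y_2$. The general cycles are naturally parametrized by $\GX_{0,1}\setminus V^+(Y_1)$, while the split chains are parametrized by the fiber product $V^+(Y_1)\times_{Y_1}V^-(Y_1)$, a $\P^{\nu^-(Y_1)-1}$-bundle over $V^+(Y_1)$ (Remark \ref{rem:normalquot}). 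By Lemma \ref{lem:projC}, $\prul$ sends a cycle to the orbit issuing from $Y_0$, so it is an isomorphism on the open stratum and contracts the split-chain divisor onto $V^+(Y_1)$.

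Next, I would construct $\eta:Y^{\prur}\to \CX$. The equalized $\C^*$-action on $X^{\prur}_{0,2}$ yields, via Bia{\l}ynicki-Birula, a flat universal family over $Y^{\prur}$ of $\C^*$-invariant reduced chains of orbit closures joining the sink $Y^{\prur}$ to the source $\GX_{1,2}$. Pushing this family forward under $b_{\prur}:X^{\prur}_{0,2}\to X_{0,2}=X^\flat$, and invoking $\CX=\CX^\flat$ (Remark \ref{rem:chblow}) together with the universal property of the Chow/Hilbert quotient (Remark \ref{rem:Hilb}), produces $\eta$. A local computation in the equalized model $\cN^+(Y_1)\oplus \cN^-(Y_1)$ around $Y_1$ identifies the normal bundle $\cN_{V^+(Y_1)|\GX_{0,1}}$ with a line bundle twist of $\pi^*\cN^-(Y_1)$, where $\pi:V^+(Y_1)\to Y_1$, and shows that the exceptional fiber of $b_{\prur}|_{Y^{\prur}}$ over $[L]\in V^+(Y_1)$ is naturally isomorphic to $V^-(Y_1)_p$ (with $p$ the source of $L$); the corresponding pushed-down cycle is exactly the split chain $L\cup L'$, with $L'$ the orbit in $V^-(Y_1)_p$ given by that direction. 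Points of $Y^{\prur}$ outside this exceptional divisor are sent by $\eta$ to the corresponding general orbit, since $b_{\prur}$ is an isomorphism there. Matching this with the description in the previous step, $\eta$ is bijective on closed points and birational; since $Y^{\prur}$ is smooth and $\CX$ is normal, Zariski's Main Theorem gives that $\eta$ is an isomorphism.

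The main technical hurdle is the rigorous construction of $\eta$: because both components of a split chain in $X^\flat$ lie inside $B^+_1$, which is exactly the center blown up by $b_{\prur}$, their strict transforms in $X^{\prur}_{0,2}$ are empty, and the original chain must be reconstructed from orbits supported on the exceptional divisor of $b_{\prur}$. Tracing these Bia{\l}ynicki-Birula chains in $X^{\prur}_{0,2}$ back to split chains in $X^\flat$, and verifying flatness of the resulting family, is the most delicate step, and it is precisely where the local normal-bundle computation sketched above enters.
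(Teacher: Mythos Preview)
Your overall strategy coincides with the paper's: identify $\CX_{0,2}$ with the sink $Y^{\prur}$ of $X^{\prur}_{0,2}$, which by Remark \ref{rem:GITblowup} is the smooth blowup of $\GX_{0,1}$ along $V^+(Y_1)$. The difference lies in how the isomorphism $Y^{\prur}\simeq\CX_{0,2}$ is established. The paper inserts the normalized Chow quotient $\CX^{\prur}_{0,2}$ of $X^{\prur}_{0,2}$ as an intermediary and proves two bijections separately: first $\CX^{\prur}_{0,2}\to Y^{\prur}$, using the key fact that $\nu^-(Y^{\prur}_1)=1$ for every inner component $Y^{\prur}_1$ of $X^{\prur}_{0,2}$ (so any orbit with source at $Y^{\prur}_1$ extends uniquely to a chain reaching the source of $X^{\prur}_{0,2}$); second $\CX^{\prur}_{0,2}\to\CX_{0,2}$, induced by the equivariant blowup $b_{\prur}$, by tracking tangent directions at fixed points. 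Your direct construction of $\eta:Y^{\prur}\to\CX$ is morally the composite of these, but two steps are underjustified: the existence of a flat universal family of invariant chains over $Y^{\prur}$ is precisely the content of the first bijection and hinges on the observation $\nu^-(Y^{\prur}_1)=1$, which you never isolate; and pushing a flat family forward along a blowup does not in general yield a flat family, so obtaining $\eta$ from the Hilbert scheme needs more care than you indicate. Routing through $\CX^{\prur}_{0,2}$ sidesteps both issues cleanly.

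There is also a small factual slip in your last paragraph: it is not true that both components of a split chain lie in $B^+_1$. The component $C_1$ (with source on $Y_1$) lies in $B^+_1$, but $C_2$ (with sink on $Y_1$) lies in $B^-_1$, so its strict transform under $b_{\prur}$ is unproblematic. Only $C_1$, being contained in the blowup center, fails to have an honest strict transform; in the paper's argument $C_1'$ is recovered from its tangent direction at its source, using that $db_{\prur}$ maps $\cN^+(Y^{\prur}_1)_{P'}$ isomorphically onto $\cN^+(Y_1)_P$.
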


\begin{proof} 
We will prove the statement for $\prul$; the case of $\prur$ is analogous. We consider the blowup $X^\prur_{0,2}$ of $X_{0,2}$ along (the strict transform of) $B^+_1$, and its sink $Y^\prur$. By Remark \ref{rem:GITblowup} the variety  $Y^\prur$ is the smooth blowup of  $\GX_{0,1}=\CX_{0,1}$ along the subset parametrizing $\C^*$-orbits in $X_{0,2}$ with 
source at an inner fixed point component of weight $a_1$. It is then enough to show that $Y^\prur\simeq \CX_{0,2}$. 

We note first that, if $Y^\prur_1 \subset X^\prur_{0,2}$ is 
an inner fixed point component of $X^\prur_{0,2}$, mapping via $b_\prur$ onto a fixed point component $Y$ of weight $a_1$, then $\nu^-(Y^\prur_1)=1$. This implies that, given the closure of any orbit with source $P$ at $Y^\prur_1$, there exists a unique $\C^*$-invariant curve with sink at $P$ (Cf. \cite[Lemma 2.2]{WORS5}). In particular the morphism $\CX^\prur_{0,2}\to Y^\prur=\GX^\prur_{0,1}$ is bijective. By the normality of $\CX^\prur_{0,2}$ and $Y^\prur$, it follows that this map is an isomorphism.

On the other hand, the blowup morphism $b_\prur:X^\prur_{0,2} \to X_{0,2}$ is $\C^*$-equivariant, hence it induces a surjective morphism $\CX^\prur_{0,2}\to \CX_{0,2}$. We claim that this map is also injective and, consequently, an isomorphism. The morphism $b_\prur$ is an isomorphism outside of the union of $\C^*$-invariant curves linking $Y^\prur$ to inner fixed points component of weight $a_1$. It is  then enough to show that for every connected $\C^*$-invariant $1$-cycle $C=C_1+ C_2  \subset X_{0,2}$, with $C_1$ linking the sink to a fixed point component of weight $a_1$, and $C_2$ linking a fixed point component of weight $a_1$ to the source, there exists a unique $\C^*$-invariant connected $1$-cycle $C'=C'_1+ C'_2$ such that  $b_\prur(C'_i)=C_i$. Note that $C'_2$ is necessarily the strict transform of $C_2$ into $X^\prur_{0,2}$, which is unique. On the other hand $C'_1$ and $C_1$ are uniquely determined by their tangent directions at their sources $P'\in X^\prur_{0,2}$, $P=b_\prur(P')$. Since the differential of $b_\prur$ maps $\cN^+(Y'_1)_{P'}$ isomorphically to $\cN^+(Y_1)_P$, the uniqueness of $C'_1$ follows.
\end{proof}

As a consequence, we get that, in the Diagram (\ref{fig:Chow}), all the pruning maps:
\begin{equation}\label{eq:firstblowups}
\begin{tikzcd}[
  column sep={2.9em,between origins},
  row sep={3.2em,between origins},
]
&\CX_{0,2}\arrow[rd,"\prur"]\arrow[dl,"\prul",labels=above left]&&\CX_{1,3} \arrow[dl,"\prul",labels=above left]&&\dots&&\CX_{r-3,r-1}\arrow[rd,"\prur"] &&\CX_{r-2,r}\arrow[rd,"\prur"] \arrow[dl,"\prul",labels=above left]&\\
\GX_{0,1}&&\GX_{1,2}&&\dots&&\dots&&\GX_{r-2,r-1}&&\GX_{r-1,r}
\end{tikzcd}
\end{equation}
are smooth blowups. The next statement tells us that the rest of the maps in Diagram (\ref{fig:Chow}) are, up to normalization, pullbacks of these blowups. 

\begin{lemma}\label{lem:blowup2}
In the situation of Proposition \ref{prop:blowup}, assume  that $r>2$ and consider the following   diagram:
$$
\xymatrix@=5mm{&\CX_{0,r}\ar[ld]_{\prul^{r-2}}\ar[rd]^{\prur}&\\
\CX_{0,2}\ar[rd]_{\prur}&&\CX_{1,r}\ar[ld]^{\prul^{r-2}}\\&\GX_{1,2}&}
$$
Then $\CX_{0,r}$ is the normalization of the fiber product of $\CX_{0,2}$ and $\CX_{1,r}$ over $\GX_{1,2}$.
\end{lemma}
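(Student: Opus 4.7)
The plan is to construct a natural morphism $\alpha\colon \CX_{0,r}\to \CX_{0,2}\times_{\GX_{1,2}}\CX_{1,r}$ and then show that it is the normalization of the target. To build $\alpha$ I will apply the universal property of the fiber product; this amounts to checking that the two compositions $\CX_{0,r}\xrightarrow{\rho_{0,2}}\CX_{0,2}\xrightarrow{\prur}\GX_{1,2}$ and $\CX_{0,r}\xrightarrow{\rho_{1,r}}\CX_{1,r}\xrightarrow{\prul^{r-2}}\GX_{1,2}$ agree. But both are pruning maps in the sense of Remark \ref{rem:projC2}, and they coincide by the commutativity of Diagram~\eqref{fig:Chow}.

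Next I will use the Segre embeddings of Corollary \ref{cor:chowgrass}, which by Proposition \ref{prop:normal} realise each $\CX_{i,j}$ as the normalization of its image under $\gamma_{i,j}$ inside the Segre variety $\Sigma(V_{i,j})\simeq\prod_{u}\P(V_u)$, where $u$ runs over the weight indices appearing in $V_{i,j}$. The key linear-algebraic observation is that $V_{0,2}$, $V_{1,r}$ and $V_{1,2}$ are sums of weight subspaces of $V_{0,r}$ whose index ranges cover the range of $V_{0,r}$ and overlap precisely in the range of $V_{1,2}$. This produces a canonical identification
\[
\Sigma(V_{0,r})\;\simeq\;\Sigma(V_{0,2})\times_{\Sigma(V_{1,2})}\Sigma(V_{1,r}),
\]
and Lemma \ref{lem:projC} makes the whole fiber-product picture compatible with $\alpha$, $\gamma$ and $\gamma_{0,2}\times\gamma_{1,r}$.

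With this in place, I would prove that $\alpha$ is bijective on closed points. Injectivity is immediate: two points of $\CX_{0,r}$ that $\alpha$ sends to the same pair $(c_1,c_2)$ must have identical Segre coordinates in $\Sigma(V_{0,r})$, and $\gamma$ is injective on closed points by Proposition \ref{prop:normal}. Surjectivity I would settle by a density plus dimension argument: a general compatible pair $(c_1,c_2)$ comes from a pair of general orbit closures in $X_{0,2}$ and $X_{1,r}$ which, by the birationality of the pruning maps, lifts uniquely to a general orbit closure in $X^\flat$. Since $\CX_{0,r}$, $\CX_{0,2}$, $\CX_{1,r}$ and $\GX_{1,2}$ are all irreducible of dimension $\dim X -1$, the fiber product is equidimensional of the same dimension, so $\alpha$ is generically bijective onto the main component of the target; properness then upgrades generic dominance to surjectivity.

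The main technical difficulty I anticipate is controlling surjectivity at non-generic points of the fiber product, where the cycles decompose into several components and it is not a priori clear that the ``concatenation'' of two compatible chains in $X_{0,2}$ and $X_{1,r}$ yields a valid element of $\CX_{0,r}$. The Segre picture disposes of this cleanly: the common factors $\prod_{u\in[\tau_{1,2}^-,\tau_{1,2}^+]}\P(V_u)$ encode exactly the middle portion of any such chain, so compatibility over $\GX_{1,2}$ is literally the statement that these Segre coordinates match, and every compatible pair gives a well-defined point of $\Sigma(V_{0,r})$ that lies in the closed subset $\gamma(\CX_{0,r})$ by density. Once $\alpha$ is a finite bijection onto the main component of the target, normality of $\CX_{0,r}$ together with Zariski's Main Theorem identifies $\alpha$ with the normalization, as required.
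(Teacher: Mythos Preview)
Your approach via the Segre embeddings is elegant and genuinely different from the paper's. The paper argues directly with the geometry of the cycles: for each $c\in\CX_{1,r}$ it shows, by a case analysis on whether the first component of the chain has its source at weight $a_1$ or beyond, that $\zeta:=\prul^{r-2}$ induces a \emph{bijection} between the fibers $\prur^{-1}(c)\subset\CX_{0,r}$ and $\prur^{-1}(\zeta(c))\subset\CX_{0,2}$. This establishes bijectivity of $\alpha$ on all closed points, and hence set-theoretic irreducibility of the fiber product, without appealing to the Segre picture at all.

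Your injectivity argument is fine (it is precisely the factorization $\gamma=(\text{Segre iso})\circ(\gamma_{0,2}\times\gamma_{1,r})\circ\alpha$ together with Proposition~\ref{prop:normal}), but your surjectivity step has a genuine gap. The claim that ``the fiber product is equidimensional of the same dimension'' does not follow from the varieties involved being irreducible of dimension $\dim X-1$; fiber products of birational morphisms can acquire extra components. For a concrete toy model, take $f=g$ to be the blowdown $\mathrm{Bl}_p\PP^2\to\PP^2$: the fiber product has, besides the main component, a second component $E\times E\simeq\PP^1\times\PP^1$ sitting over $p$. Your density argument only shows that $\gamma(\CX_{0,r})$ is the closure of the image of the generic locus, i.e.\ the main component; it does not rule out the presence of further components of the fiber product disjoint from the image of $\alpha$. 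Since the statement asserts that $\CX_{0,r}$ normalizes the \emph{whole} fiber product (and this is exactly what is needed in the proof of Proposition~\ref{prop:blowup} to pull back the blowup structure), you must establish bijectivity onto every closed point, not just onto one component.

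The paper's fiber-by-fiber analysis is what rules out extra components: the crucial case is when the cycle in $X_{1,r}$ has its sink at a weight-$a_1$ fixed point $P$, where the fiber $\prur^{-1}(c)$ in $\CX_{0,r}$ is identified with $\PP(\cN^+(Y_1)^\vee_P)$ and shown to map \emph{isomorphically} (not just dominantly) to the corresponding fiber in $\CX_{0,2}$. To make your Segre approach complete you would need an independent argument for the irreducibility of $\CX_{0,2}\times_{\GX_{1,2}}\CX_{1,r}$, for instance by controlling the scheme-theoretic preimage in $\CX_{1,r}$ of the blowup center of $\prur:\CX_{0,2}\to\GX_{1,2}$; but that essentially reproduces the paper's geometric analysis.
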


\begin{proof}
For simplicity, we will set $\zeta:=s^{r-2}$.  We will show that the natural map from $\CX_{0,r}$ to the fiber product of $\CX_{0,2}$ and $\CX_{1,r}$ over $\GX_{1,2}$ is a bijection by showing that, for every $c\in \CX_{1,r}$, $\zeta$ induces a bijection between $\prur^{-1}(c)$ and $\prur^{-1}(\zeta(c))$. 

 We will describe the fiber $\prur^{-1}(c)$, for an element $c\in \CX_{1,r}$. 
We choose a preimage $\gamma\in \prur^{-1}(c) \subset \CX_{0,r}$ and we denote the irreducible components of the corresponding $1$-cycles $C$ and $\Gamma$ by $C_1,\dots, C_k\subset X_{1,r}$ and
 $\Gamma_1,\dots, \Gamma_{l}\subset X_{0,r}$,  respectively, ordered by the weights of their extremal fixed points. 
 In particular, $\Gamma_1$ meets the sink of $X_{0,r}$, and we denote by $P$ its source. 

 Let us consider another element $\delta \in \prur^{-1}(c)$, which corresponds to a $1$-cycle $\Delta=\Delta_1+\dots+\Delta_{m}$, whose components are numbered as above. We now distinguish two cases, according to the weight $\mu_L(P)$.  

If $\mu_L(P)>a_1$,  then, by Lemma \ref{lem:projC}, we have $m=l=k$ and $\varphi_{1,r}(\Gamma_i)=\varphi_{1,r}(\Delta_i)=C_i$ for every $i=1, \dots, k$. Since $\mu_L(P)>a_1$, then  $\Gamma_2,\dots,\Gamma_{k}$ and $\Delta_2,\dots,\Delta_{k}$ are contained in the open set where $\varphi_{1,r}$ is an isomorphism, and so we conclude that $\Gamma_j=\Delta_j$ for every $j\geq 2$. In particular the source of $\Gamma_1$ is $P$ and, since $\varphi_{0,r}$ is a $\C^*$-equivariant isomorphism in a neighborhood of this point, it follows that $\Gamma_1=\Delta_1$. Therefore $\Gamma=D'$ and $\gamma=\delta$. In particular $\prur^{-1}(\zeta(c))$ is a point.

If $\mu_L(P)=a_1$, a similar argument shows that $l=m=k+1$, and that $\Gamma_j=\Delta_j$ for $j\geq 2$. On the other hand, in this case $\Delta_1$ can be any irreducible $\C^*$-invariant (not fixed) curve with source at $P$. Since the action is equalized, it follows that $\prur^{-1}(c)$ is bijective to  the projective space $\P(\cN^{+}(Y_1)^{\vee}_{P})$ (see Remark \ref{rem:normalquot}). Since $\varphi_{0,2}$ is an equivariant isomorphism in a neighborhood of $P$, it follows that $\zeta$ maps $\P(\cN^{+}(Y_1)^{\vee}_{P})$ isomorphically onto $\prur^{-1}(\zeta(c))$. This completes the proof.
\end{proof}

\begin{proof}[Proof of Proposition \ref{prop:blowup}] 
The blowup of a variety along a closed subset is uniquely determined by its universal property, namely that the inverse image of the closed subset is a Cartier divisor, and that it is a final object with this property (\cite[II, Proposition 7.14]{Ha}). Moreover, it is known (see \cite[Proposition IV-21, Lemma IV-41]{EH00}), that the fiber product of two blowup maps $b_1:M_1\to M$, $b_2:M_2\to M$ of a variety $M$ along two subvarieties $Z_1,Z_2$ equals the blowup of $M_1$ along $b_1^{-1}(Z_2)$ and the blowup of $M_2$ along $b_2^{-1}(Z_1)$. 

In our case, the exceptional locus of $\prur:\CX_{0,r}\to \CX_{1,r}$ is the inverse image of the exceptional locus of $\prur:\CX_{0,2}\to \CX_{1,2}$, which is a Cartier divisor by Lemma \ref{lem:blowup1}. The universality then follows easily from Lemma \ref{lem:blowup2}, by the universal property of the Cartesian square and the normalization.

A similar argument tells us that also the map $\prul:\CX_{0,r}\to \CX_{0,r-1}$ is the normalization of a blowup of $\CX_{0,r-1}$, and that this map is the pullback to $\CX_{0,r-1}$ of $\prul:\CX_{r-2,r}\to \CX_{r-2,r-1}$. Applying this to the Chow quotients of $X_{i,j}$ we obtain that also the morphisms $\prul:\CX_{i,j}\to \CX_{i,j-1}$, $\prur:\CX_{i,j}\to \CX_{i+1,j}$ are normalizations of blowups, and that the two parallelograms in the diagram:
$$
\xymatrix@C=1mm@R=4mm{&&\CX_{i,j}\ar[rd]\ar[ld]&&\\
&\CX_{i,j-1}\ar[rrdd]\ar[ld]&&\CX_{i+1,j}\ar[rd]\ar[lldd]& \\
\CX_{i,i+2}\ar[rd]&&&&\CX_{j-2,j}\ar[ld]\\
&\CX_{i+1,i+2}&&\CX_{j-2,j-1}&} 
$$
are normalizations of Cartesian squares. It easily follows that all the parallelograms (in particular all the rhombuses) in Diagram (\ref{fig:Chow}) are normalizations of Cartesian squares, as well. Now, since all the maps in Diagram (\ref{fig:Chow}) are constructed by successive normalizations of fiber products upon Diagram (\ref{eq:firstblowups}), and all the maps in Diagram (\ref{eq:firstblowups}) are normalizations of  blowups, then  all the maps in Diagram (\ref{fig:Chow}) are normalizations of  blowups. 
\end{proof}

A consequence of Theorem \ref{thm:main} is that $\CX$ parametrizes all the $\C^*$-invariant connected cycles with dual diagram of type $\DA$ with extremal fixed point components in the sink and the source of $X$: 

\begin{corollary}\label{cor:CXcontainsall}
Let $(X,L)$ be a smooth polarized pair endowed with an equalized $\C^*$-action of criticality $r$, such that $\nu^\pm(Y) \ge 2$ for every fixed point component of weight $a_i$ with $2 \le i \le r-2$.  Let $C$ be a $\C^*$-invariant $1$-dimensional reduced subscheme with irreducible components $C_1,\dots,C_k$ satisfying that 
\begin{itemize}
\item[(C1)] every $C_j$ is the (smooth) closure of a $1$-dimensional orbit; 
\item[(C2)] the singular points of $C$ are the (transversal) intersections $P_j:=C_j\cap C_{j+1}$, are fixed points of weights $a_{i_j}$, and  $0<i_1<\dots<i_{k-1}<r$; 
\item[(C3)] every $P_j$ is the source of $C_j$ and the sink of $C_{j+1}$, for $j=1,\dots,k-1$;  
\item[(C4)] the sink of $C_1$ and the source of $C_k$ belong to $Y_0,Y_r$, respectively.
\end{itemize}
Then $C$ is an element of the family parametrized by $\CX$.
\end{corollary}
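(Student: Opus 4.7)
The plan is to argue by induction on the criticality $r$ of the action, exploiting the recursive normalized Cartesian structure of Diagram (\ref{fig:Chow0}) provided by Theorem \ref{thm:main}.

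For $r=1$, the claim is immediate since $k=1$ and $\CX=\GX_{0,1}$ already parametrizes all $\C^*$-orbits from $Y_0$ to $Y_r$. For $r=2$, if $k=1$ the same observation applies, while if $k=2$ the claim follows from Lemma \ref{lem:blowup1}: the exceptional divisor of the blowup $\CX=\CX_{0,2}\to \GX_{0,1}$ along $V^+(Y_1)$ is the projective bundle whose fiber over $[C_1]\in V^+(Y_1)$, corresponding to an orbit with source $P_1\in Y_1$, is $\P(\cN^-(Y_1)^\vee_{P_1})=V^-(Y_1)_{P_1}$. By equalization, this fiber parametrizes the possible second components $C_2$ with sink at $P_1$, so every pair $(C_1,C_2)$ as in (C1)-(C4) is accounted for.

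For $r\geq 3$, I would use the normalized Cartesian rhombus at the top of Diagram (\ref{fig:Chow0}),
\[
\CX_{0,r}=\mbox{normalization of }\bigl(\CX_{0,r-1}\times_{\CX_{1,r-1}}\CX_{1,r}\bigr),
\]
established in the proof of Proposition \ref{prop:blowup}. The prunings $X_{0,r-1},X_{1,r}$ both have criticality $r-1$ and inherit the hypotheses of the corollary: equalization is preserved by the $\C^*$-equivariant small modifications constructed in Proposition \ref{prop:WORS3}, and the inner fixed components of the prunings, being a subset of those of $X$ at the same weights, inherit the codimension condition on $B^\pm(Y)$ in the relevant strictly inner range. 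The next step is to associate to $C$ two sub-chains $C^{(-)}\subset X_{0,r-1}$ and $C^{(+)}\subset X_{1,r}$ by transferring the components of $C$ through the pruning maps $\varphi_{0,r-1}$ and $\varphi_{1,r}$, and reinterpreting the extremal components adjacent to the ``trimmed'' weight ranges as points of the new sink/source GIT quotients. These sub-chains will satisfy the analogues of (C1)-(C4) on their respective prunings, so by the induction hypothesis they define points $[C^{(\pm)}]\in \CX_{0,r-1},\,\CX_{1,r}$ whose images in $\CX_{1,r-1}$ coincide (both being determined by the truncation of $C$ to the middle weight range). The universal property of the fibered product, combined with the bijectivity of the normalization map above the relevant point --established by the argument in the proof of Lemma \ref{lem:blowup2}-- will then yield a unique point of $\CX$ parametrizing $C$.

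The main obstacle will be the precise construction of $C^{(\pm)}$. Since the pruning maps are only isomorphisms in codimension one, and since they modify the extremal fixed point structure --the sink and source of $X_{0,r-1}$ being the GIT quotients $\GX_{0,1}$ and $\GX_{r-2,r-1}$ rather than the original $Y_0^\flat$ and $Y_r^\flat$--, the extremal components of $C$ cannot be transferred naively. Depending on the weight $a_{i_{k-1}}$ of the penultimate node of $C$, the last component $C_k$ should be either reinterpreted as the point of $\GX_{r-2,r-1}$ corresponding to its orbit (when $i_{k-1}\leq r-2$) or discarded (when $i_{k-1}=r-1$), and analogously for $C_1$ in $X_{1,r}$. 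Making this transfer rigorous will require unpacking the explicit blowup-blowdown description of the prunings from the proof of Proposition \ref{prop:WORS3}, together with Remark \ref{rem:GITblowup}, to track how cycles passing through the base loci of the pruning maps behave under them.
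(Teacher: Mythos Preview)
Your approach is correct in outline but takes a more circuitous route than the paper's. Both proofs exploit the normalized Cartesian structure of Diagram~(\ref{fig:Chow0}), yet they climb it from opposite ends. You run an induction on the criticality, using the \emph{top} rhombus $\CX_{0,r}\to\CX_{0,r-1},\CX_{1,r}\to\CX_{1,r-1}$ and pushing the chain $C$ into the prunings $X_{0,r-1}$ and $X_{1,r}$. The paper instead works \emph{bottom-up}: each irreducible component $C_t$, being the closure of a single orbit, determines directly a point $g_i\in\GX_{i-1,i}$ for every $i$ in the range $[i_{t-1}+1,i_t]$; condition~(C2--C3) forces consecutive $g_i,g_{i+1}$ to agree in $\GX_{i,i}$, so the bottom Cartesian rhombus gives a lift to $\CX_{i-1,i+1}$, and one repeats layer by layer up to $\CX_{0,r}$. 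A final appeal to reducedness (Lemma~\ref{lem:reduced}) identifies the resulting cycle with $C$.

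The practical payoff of the paper's route is exactly what you flag as your ``main obstacle'': by reading off the $g_i$'s from the orbits themselves, the paper never has to transport $C$ through the birational pruning maps $\varphi_{i,j}$, never has to worry about how the extremal components behave across the small modifications, and never needs to verify (C1)--(C4) on a modified model. Your case analysis on $a_{i_{k-1}}$ and the unpacking of Proposition~\ref{prop:WORS3} and Remark~\ref{rem:GITblowup} can be made rigorous, but it is work that simply evaporates in the bottom-up argument. If you want to streamline your write-up, I would recommend switching to the paper's decomposition: associate to $C$ the tuple $(g_0,\dots,g_{r-1})\in\prod_i\GX_{i,i+1}$ and lift through the rhombuses from the base; it is a two-paragraph proof.
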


\begin{proof}
Consider a subscheme $C=C_1\cup\dots \cup C_k$ satisfying the properties (C1--C4), and denote by $a_{i_0}=a_0,a_{i_1},\dots, a_{i_k}=a_r$ the weights of its fixed points $P_0,\dots,P_k$. Every irreducible component $C_t$ is the closure of an orbit parametrized by the geometric quotients $\GX_{i_t,i_t+1},\dots, \GX_{i_{t+1}-1,i_{t+1}}$; let us denote the corresponding points by $g_{i_t},\dots, g_{i_{t+1}-1}$, respectively. In this way to the subscheme $C$ we can associate elements $g_{i}\in\GX_{i-1,i}$ for every $i=1,\dots,r$. Property  (C2) implies that for every $i=1,\dots,r-1$ the source of the orbit parametrized by $g_i$ coincides with the sink of the orbit parametrized by $g_{i+1}$, hence the images of this two elements into $\GX_{i,i}$ are equal, and we may conclude that they have a common preimage in $\GX_{i-1,i+1}$. Recursively, using Theorem \ref{thm:main}, we get that the $g_i$'s have a common preimage $g\in\CX_{0,r}=\CX$; in other words the image of $g\in \CX$ into $\Hilb(X)$ parametrizes a subscheme supported on $C$; since this scheme must be reduced by Lemma \ref{lem:AMFM}, it is precisely $C$. This concludes the proof.
\end{proof}

\begin{remark}\label{rem:center}
Let us describe here the blowups given by the pruning maps.
For every weight $a_k$ we will denote by $\overline Y_{k}$ the image in $\GX_{k,k}$ of the union of the fixed point components of weight $a_k$. For every $i<k<j$, we denote by $S_{i,k} \subset \CX_{i,k}$ the (scheme-theoretical) inverse image of $\overline Y_{k}$ in $\CX_{i,k}$ and by $D_{k,j} \subset \CX_{k,j}$ the inverse image of $\overline Y_{k}$ in $\CX_{k,j}$. In other words $S_{i,k}$ (resp. $D_{k,j}$) is the subvariety of $\CX_{i,k}$ (resp. $\CX_{k,j}$) parametrizing invariant $1$-cycles with source (resp. sink) in the image in $X_{i,k}$ (resp. in $X_{k,j}$) of the union of the fixed point components of weight $a_k$.

The exceptional divisor of the map $s:\CX_{i,j} \to \CX_{i,j-1}$ consists of the points parametrizing cycles meeting an inner fixed component of $X_{i,j}$ of weight $a_{j-1}$, hence 
the center of the blowup $s:\CX_{i,j} \to \CX_{i,j-1}$ is $S_{i,j-1}$; 
similarly the center of the blowup $d:\CX_{i,j} \to \CX_{i+1,j}$ is $D_{i+1,j}$.
\begin{equation}\label{eq:center}
\begin{gathered}
\xymatrix@C=1mm@R=3mm{&&&\CX_{i,j}\ar[ldd]_{\prul}\ar[rdd]^{\prur}&\\
&&&&\\
\ S_{i,j-1}\ \ar[rrddd] \ar@{^(->}[rr]& &\ \CX_{i,j-1}\ar[rdd]_{\prur}&&\CX_{i+1,j}\ar[ldd]^{\prul}&&D_{i+1,j} \ar[llddd]\ar@{_(->}[ll]\\
&&&&\\
&& &\CX_{i+1,j-1} &&\\
&&S_{i+1,j-1}\ar@{^(->}[ru] &&D_{i+1,j-1}\ar@{_(->}[lu] &
}
\end{gathered}
\end{equation}
By definition $S_{i,j-1}=d^{-1}(S_{i+1,j-1})$ and $D_{i+1,j}=s^{-1}(D_{i+1,j-1})$; they will be  smooth if $S_{i+1,j-1}$ and $D_{i+1,j-1}$ are smooth and have transversal intersection. This is not true in general, and so the Chow quotient can be singular. In the following section we will present in Example \ref{ex:BRUS} an equalized action on a smooth variety whose Chow quotient is singular.
\end{remark}

\section{Smoothness of the Chow quotient}\label{sec:smooth}

In this section we will discuss the smoothness of the normalized Chow quotient $\CX$. We will first show (Example \ref{ex:BRUS}) that $\CX$ can be singular, even in the case in which the variety $X$ is smooth and the action is equalized, that we are considering in this paper. In order to guarantee the smoothness of $\CX$, one needs to impose some positivity assumptions on $X$; we will show the smoothness of $X$ --together with the smoothness of some of its partial quotients $\CX_{i,j}$-- in the case of convex varieties (Section \ref{ssec:convex}).

\begin{example}\label{ex:BRUS}
On the projective line $\P^1$ with homogeneous coordinates $(x:y)$ set  $0:=(1:0)$, $\infty:=(0:1)$ and consider the smooth projective variety $G$ defined as the blowup of $\P^1\times\P^1\times \P^1$ along the points $(0,\infty,0)$, $(0,0,\infty)$. We denote by $\ell_-,\ell_+$ the strict transform in $G$ of the lines $\{0\}\times\P^1\times\{0\}$, $\{0\}\times\{0\}\times\P^1$, respectively, which meet at the strict transform of the point $(0,0,0)$. 
The variety $G$ admits two Atiyah flips $\psi_-$, $\psi_+$, with centers at $\ell_-$, $\ell_+$, respectively:
$$
\xymatrix@C=15mm{Y_-&G\ar@{-->}[l]_(.4){\psi_-}\ar@{-->}[r]^{\psi_+}&Y_+}
$$
We then consider two ample divisors $L_\pm$ in $Y_\pm$, identify them with two movable divisors in $G$, and consider the bi-graded finitely generated $\C$-algebra:
$$
R:=\bigoplus_{a_{\pm}\geq 0}\HH^0(G,a_-L_-+a_+L_+).
$$
Following \cite{BRUS}, we may choose $L_\pm$ and an integer grading in $R$ so that $X:=\Proj(R)$ (together with its natural polarization) is a projective variety with a $\C^*$-action of criticality $3$, and three geometric quotients:
$$
\GX_{0,1}=Y_-,\quad \GX_{1,2}=G,\quad \GX_{2,3}=Y_+.
$$
The varieties $Y_-,G,Y_+$ are smooth toric threefolds, and the birational transformations among them are compatible with the torus action; we have represented  very ample polytopes of the three varieties in question in Figure \ref{fig:example2}. 
\begin{figure}[h!!]
\includegraphics[width=12.5cm]{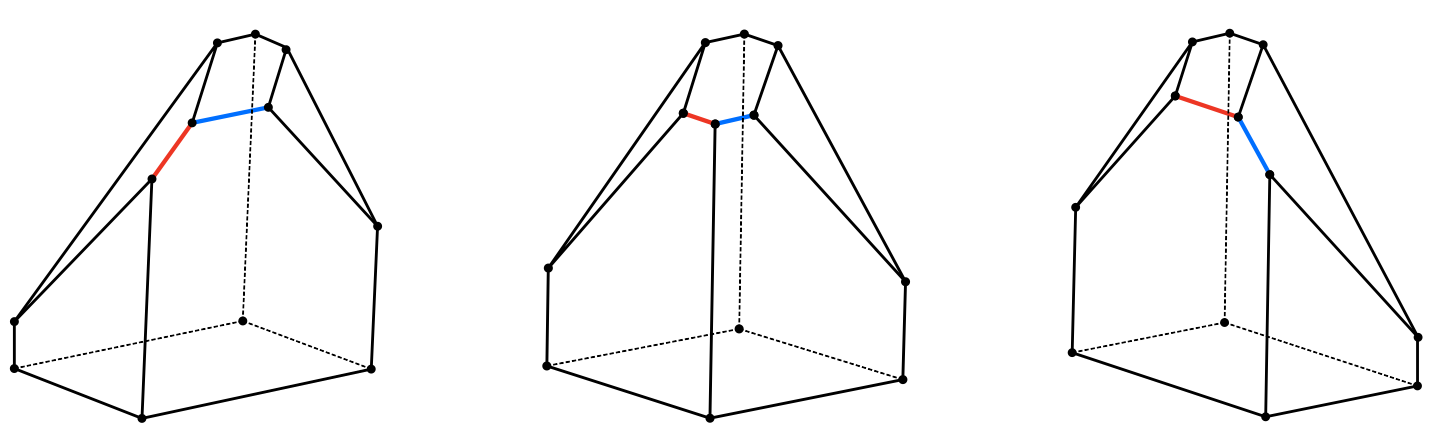}
\caption{Very ample polytopes of the varieties $Y_-,G,Y_+$.\label{fig:example2}}
\end{figure}
Then the construction of a bordism $X$  can be done in a toric way. In fact, one can check that the polytope in $\R^4=\R\otimes\Z^4$ whose vertices are the columns in the matrix below determines a smooth projective $4$-dimensional variety $X$.
\setcounter{MaxMatrixCols}{26}
\[\setlength{\arraycolsep}{1pt}
\renewcommand\arraystretch{1}
\hfsetfillcolor{gray!10}
\hfsetbordercolor{gray}
\footnotesize{\begin{pmatrix}
0 & 0 & 0 & 0 & 0 & -1 & -2 & -2 & -2 & -2 & -2 &  -2 &  \tikzmarkin{pr2}(-0.3,-0.1)(0.4,0.3) {-3}  & \tikzmarkin{pr}(-0.3,-0.1)(0.4,0.3) {-5} & 0 & 0 & 0 & 0 & 0 & -1 & -3 & -5 & -6 & -6 & -6 & -6\\
0 & 0 & -1 & -4 & -4 & 0 & 0 & 0 & -1 & -3 & -4 & -4 & -4 & -4 & 0 & 0 & -1 & -4 & -4 & 0 & -4 & -4 & 0 & 0 & -4 & -4\\
0 & -6 & 0 & -3 & -6 & 0 & -1 & -6 & 0 & 0 & -1 & -6 & 0 & 0 & 0 & -6 & 0 & -3 & -6 & 0 & 0 & 0 & -5 & -6 & -1 & -6\\
0 & 0 & 0 & 0 & 0 & 0 & 0 & 0 & 0 & 0 & 0 & 0 &  1 \tikzmarkend{pr2}& 3  \tikzmarkend{pr}& 4 & 4 & 4 & 4 & 4 & 4 & 4 & 4 & 4 & 4 & 4 & 4\end{pmatrix}}\]
In the variety $X$ we consider the $\C^*$-action corresponding to the fourth natural projection of the character lattice $\Z^4\to \Z$. With respect to the embedding determined by the polytope, this action has bandwidth $4$, the sink and the source are the varieties $Y_-$, $Y_+$, respectively, and we have two extra (inner) fixed points $P_1,P_3$ (of weights $1$,$3$, respectively, corresponding to the vertices of the polytope highlighted in the matrix above). These two points correspond to the Atiyah flips $\psi_\pm$ in the following;  the flip $\psi_-^{-1}:Y_-\to G$, for instance, substitutes the projective line parametrizing $\C^*$-orbits with source at $P_1$ (corresponding to tangent directions in $\cN^{+}(P_1)$) by the projective line parametrizing $\C^*$-orbits with source at $P_3$ (corresponding to tangent directions in $\cN^{+}(P_3)$). We have represented the $\C^*$-action on the variety $X$ in Figure \ref{fig:example2b}. 

\begin{figure}[h!!]
\begin{picture}(230,130)
\put(0,0){\includegraphics[width=8cm]{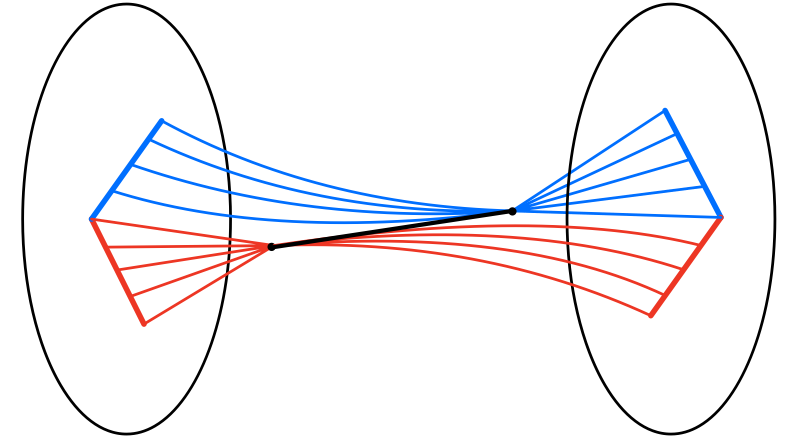}}
\put(30,16){$Y_-$}
\put(25,40){\color{red}$\ell_-$}
\put(25,80){\color{blue}$\ell_+$}
\put(185,16){$Y_+$}
\put(198,40){\color{red}$\ell_-$}
\put(200,80){\color{blue}$\ell_+$}
\put(74,42){$P_1$}
\put(138,70){$P_3$}
\end{picture}
\caption{The $\C^*$-action on the variety $X$.\label{fig:example2b}}
\end{figure}

Now we note that in the sink $Y_-$ we have two distinguished rational curves, which are the intersections $B^+(P_1)\cap Y_-$ and $B^+(P_3)\cap Y_-$. Abusing notation, we denote them by $\ell_-$, $\ell_+$ (since $\ell_+$ is the strict transform of $\ell_+\subset G$ and $\ell_-\in Y_-$ is the indeterminacy locus of $\psi_-^{-1}:Y_-\to G$). The intersection of $\ell_-, \ell_+\in Y_-$ is a point. 

We claim that the Chow quotient of this action is not smooth. 
In fact, following the description presented in the proofs of Proposition \ref{prop:blowup} and Lemma \ref{lem:blowup2}, the Chow quotient $\CX$ may be constructed in two steps  as follows:
\begin{itemize}
\item The Chow quotient $\CX_{0,2}$ is the smooth blowup of $Y_-$ along $\ell_-$. 
\item The Chow quotient $\CX=\CX_{0,3}$ is the normalization of the blowup of $\CX_{0,2}$ along the inverse image of $\ell_+$. 
\end{itemize}
Since the inverse image of $\ell_+$ in $\CX_{0,2}$ consists of two $\P^1$'s meeting at a point, $\CX$ is not smooth.
\end{example}

\subsection{The case of convex varieties}\label{ssec:convex}

We finish this section by presenting a sufficient condition for the smoothness of the normalized Chow quotient $\CX$,  
which is the {\em convexity} of the variety $X$.

\begin{definition}\cite[0.4]{FP}
A smooth projective variety $X$ is said to be {\em convex} if for every morphism $\mu: \PP^1 \to X$, $H^1(\PP^1,\mu^*T_X)=0$, where $T_X$ is the tangent bundle of $X$.
\end{definition}

\begin{remark}\label{rem:convorbits}
Among smooth rationally connected varieties, the only known convex examples are rational homogeneous spaces $G/P$, where $G$ is semisimple algebraic group and $P$ is a parabolic subgroup (see \cite{Pandharipande}). Note, however, that we will use here only ``$\C^*$-invariant convexity'', i.e. that $H^1(C,{T_X}_{|C})=0$, for closures $C$ of $1$-dimensional $\C^*$-orbits.
\end{remark}

We will now prove Theorem \ref{thm:convex}, using some ideas in \cite{FP,MMW}. The proof is divided in two steps; we will show first that the partial Chow quotients $\CX_{i,j}$, $i=0$, or $j=r$, are smooth (Proposition \ref{prop:smoothChows}), and then we will conclude by showing that the maps $s,d$ among them are smooth blowups (Proposition \ref{prop:resolution}).

\begin{proposition}\label{prop:smoothChows} Under the hypotheses of Theorem \ref{thm:main}, assume moreover that $X$ is convex. Then $\CX_{i,j}$ is smooth if $i=0$ or $j=r$. 
In particular $\CX$ is smooth.
\end{proposition}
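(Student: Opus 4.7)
The plan is to identify each $\CX_{i,j}$ (with $i=0$ or $j=r$) with the normalization of an irreducible component of the Hilbert scheme $\Hilb(X_{i,j})$ via Remark \ref{rem:Hilb}, and to show under the convexity hypothesis that this component is smooth. Since smoothness implies normality, the normalization will then be an isomorphism and $\CX_{i,j}$ inherits the smoothness. For every $\C^*$-invariant chain $C$ parametrized by $\CX_{i,j}$, the smoothness of $\Hilb(X_{i,j})$ at $[C]$ reduces to the vanishing of the obstruction space $H^1(C,\cN_{C/X_{i,j}})$, which is where convexity enters.

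I first handle the main case $\CX=\CX_{0,r}$. By Lemma \ref{lem:reduced}, any cycle $C=C_1\cup\cdots\cup C_k$ parametrized by $\CX$ is a simple-normal-crossings chain of smooth rational curves, hence a local complete intersection in $X$, and $\cN_{C/X}$ is locally free of rank $\dim X-1$. Combining the tangent-normal exact sequence on $C$ with the partial normalization sequence
\[
0 \to {T_X}_{|C} \to \bigoplus_i {T_X}_{|C_i} \to \bigoplus_j (T_X)_{p_j} \to 0,
\]
where $p_1,\dots,p_{k-1}$ are the nodes of $C$, the vanishing $H^1(C,\cN_{C/X})=0$ reduces to $H^1(C_i,{T_X}_{|C_i})=0$ for each $i$ together with surjectivity of the evaluation map $\bigoplus_i H^0(C_i,{T_X}_{|C_i})\to\bigoplus_j (T_X)_{p_j}$. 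Both properties follow from convexity of $X$, as in \cite{FP}: for every $\mu:\PP^1\to X$ the pullback $\mu^*T_X$ satisfies $H^1(\PP^1,\mu^*T_X)=0$ and is globally generated. Hence $\Hilb(X)$ is smooth at $[C]$; since the image of $\CX$ in $\Hilb(X)$ is irreducible and contained in the smooth locus, it is a single smooth irreducible component, whose normalization $\CX$ is therefore isomorphic to it and in particular smooth.

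For the remaining cases $\CX_{0,j}$ with $j<r$ and $\CX_{i,r}$ with $i>0$, the identical argument applies inside $X_{i,j}$, once one verifies that ${T_{X_{i,j}}}_{|C_\ell}$ is globally generated with vanishing $H^1$ on each invariant rational component $C_\ell$ of a parametrized chain. Since the pruning map $\varphi_{i,j}:X^\flat\dashrightarrow X_{i,j}$ is a small $\Q$-factorial modification (Proposition \ref{prop:WORS3}) and the blowup $\beta:X^\flat\to X$ is an isomorphism away from the sink and the source of $X$, the strict transform of each such $C_\ell$ lifts to an invariant rational curve $\widehat C_\ell\subset X$; combining the equalization hypothesis with the Bia{\l}ynicki-Birula description of the weights of $T_X$ at the fixed-point components of the action, one controls the splitting type of ${T_{X_{i,j}}}_{|C_\ell}$ in terms of that of ${T_X}_{|\widehat C_\ell}$, and convexity of $X$ supplies the required positivity to conclude.

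The main technical obstacle I anticipate is the proper handling of the tangent sheaf of $C$ at its nodes (where $T_C$ is not locally free) in the tangent-normal sequence, together with the precise comparison of splitting types across the pruning modifications in the previous paragraph. I expect the first to be resolved, as in \cite{FP,MMW}, by factoring through the Kontsevich moduli space of stable maps of the appropriate class, which is smooth under convexity and admits a natural morphism onto $\CX$ that is bijective on points corresponding to chains satisfying (C1)--(C4) of Corollary \ref{cor:CXcontainsall}.
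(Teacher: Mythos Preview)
There is a genuine gap in your argument for $\CX=\CX_{0,r}$. You deduce that $\Hilb(X)$ is smooth at every $[C]$ in the image of $\CX$, and then assert that this image ``is a single smooth irreducible component''. But being contained in the smooth locus of $\Hilb(X)$ does not make the image an irreducible component. In fact it almost never is: for a general orbit closure $C_g\cong\P^1$ one has $h^0(C_g,\cN_{C_g/X})=\dim X-1+\deg\cN_{C_g/X}$, and unless the action on $X$ is already of B-type the degree $\deg\cN_{C_g/X}=-K_X\cdot C_g-2$ is positive, so the component of $\Hilb(X)$ through $[C_g]$ has dimension strictly larger than $\dim\CX=\dim X-1$. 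The image of $\CX$ is only a $\C^*$-fixed subvariety of that component, and smoothness of the ambient does not descend to it without further argument. The paper closes this gap by passing to the induced $\C^*$-action on the relevant component $\cM\subset\Hilb(X')$ and identifying the image of $\CX$ with a \emph{fixed-point component} $H\subset\cM$; once $\cM$ is shown to be smooth along $H$, the Bia{\l}ynicki-Birula theorem \cite[Theorem~2.5]{BB} forces $H$ itself to be smooth.

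Your treatment of the remaining quotients $\CX_{0,j}$ (and $\CX_{i,r}$) is, as you acknowledge, incomplete, and the difficulty is real: the prunings $X_{i,j}$ are not convex, so one cannot simply transplant the argument from $X$. The paper's device is to replace $X_{0,j}$ by the partial blowdown $X'_{0,j}:=X(0,\tau_+)$, whose sink is the original $Y_0$ and whose source is a divisor. Any invariant chain $C$ then splits as $C_1\cup C_2$ with $C_2$ irreducible meeting the divisorial source; a weight computation via \cite[Lemma~2.16]{WORS1} gives $T_{X'}|_{C_2}$ summands of degree $\geq -1$, hence $H^1(C_2,T_{X'}|_{C_2})=0$, while $C_1$ lies entirely in the locus where $X'\cong X$, so convexity of $X$ applies directly to give $H^1(C_1,T_{X'}|_{C_1}(-p))=0$ as in \cite[(32)]{FP}. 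This clean separation is what makes the obstruction vanishing go through; your proposed comparison of splitting types across the pruning flips does not provide it.
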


\begin{proof}
Let us prove the statement for the quotients $\CX_{0,j}$; the case of $\CX_{i,r}$ is analogous.
Note that the variety $X_{0,j}$ is by definition the pruning $X(\tau_-,\tau_+)$ for $0 < \tau_- <a_1$, $a_{j-1} < \tau_+ < j$.  

Let us consider the blowup of $X$ along the sink $Y_0$ and the source $Y_r$ of the action, $\beta: X^\flat \to X$, with exceptional divisors $Y_0^\flat$ and $Y_r^\flat$.
The indeterminacy locus of the birational modification $X_{0,j} \dashrightarrow X^\flat$ is disjoint from $Y_0^\flat$, so there exists a smooth equivariant blowdown $X_{0,j} \to X'_{0,j}$  which contracts $Y_0^\flat$ to $Y_0$. Alternatively, one may describe $X'_{0,j}$ as the pruning $X(0,\tau_+)$.
The Chow quotients $\CX_{0,j}$ and $\CX'_{0,j}$ are isomorphic (Cf. Remark \ref{rem:chblow}),
so we will show that $\CX'_{0,j}$ is smooth. To simplify notation, for the rest of the proof we will set $X':=X'_{0,j}$.
The sink of the $\C^*$-action on $X'$ is (isomorphic to) $Y_0$, and the source $Y'_r$ is a divisor  which is a birational modification of $Y_r^\flat$.

Let $C$ be the closure of a $1$-dimensional orbit of the action with sink and source $x_\pm$, and assume that $x_+\in Y'_r$. Then the weights of the $\C^*$-action on $T_{X',x_+}$ (respectively $T_{X',x_-}$) are all $0$'s and $1$'s (respectively $1$'s, $0$'s and $(-1)$'s). Then by \cite[Lemma 2.16]{WORS1} we get that the degrees of the direct summands of ${T_{X'}}_{|C}$ cannot be smaller than $-1$; thus the same holds for $\cN_{C/X'}$, and we may conclude that 
\begin{equation}\label{eq:vanish}
\HH^1(C,{T_{X'}}_{|C})=\HH^1(C,\cN_{C/X'})=0.
\end{equation}

Let us now denote by $C_g$ the closure of the general orbit of the action in $X'$, and by $[C_g]$ the corresponding point in $\Hilb(X')$; applying the above argument to $C_g$ we get that $\Hilb(X')$ is smooth at $[C_{g}]$. In particular there exists a unique irreducible component  of $\Hilb(X')$ containing it, that we denote here by $\cM$. Note that $\cM$ is a projective variety.
Let us now consider the induced $\C^*$-action on $\cM$; since $\cM$ is smooth at $[C_g]$, by \cite[Theorem 2.5]{BB} there exists a unique fixed point component  $H \subset \cM$ containing $[C_g]$. We will show that $\cM$ is smooth at every point of $H$, from which the smoothness of $H$ will follow (again by \cite[Theorem 2.5]{BB}). 

By Remark \ref{rem:Hilb}, the normalization of $H$ is $\CX'$; in particular, for every $[C]\in H$, the corresponding subscheme $C$ is a reduced tree of smooth rational curves (Lemma \ref{lem:reduced}). Furthermore, the intersection of two meeting components of $C$ is transversal, since at every fixed point $x'$ the positive and the negative part of $T_{X',x'}$ are transversal. In particular, the subscheme $C$ is a local complete intersection, hence the conormal sheaf $\cI_{C,X'}/\cI^2_{C,X'}$ is locally free, and we have a short exact sequence:
$$
\shse{\cI_{C,X'}/\cI^2_{C,X'}}{{\Omega_{X'}}_{|C}}{\Omega_C}
$$
By applying the functor $\Hom(\,\underline{\ \ }\,,\cO_C)$, we get that $\Ext^1(\cI_{C,X'}/\cI^2_{C,X'},\cO_C)$ is a quotient of $\Ext^1({\Omega_{X'}}_{|C},\cO_C)=\HH^1(C,{T_{X'}}_{|C})$. To conclude that $H$  is smooth at $[C]$ it is enough to show (see, for instance, \cite[Theorem~I.2.8]{kollar}) that \begin{equation}\label{eq:h1}
\HH^1(C,{T_{X'}}_{|C})=0.
\end{equation}
To prove (\ref{eq:h1}) we  write $C$ as $C_1 \cup C_2$, with $C_2$ irreducible and meeting the source, and set $\{p\}=C_1\cap C_2$. By (\ref{eq:vanish}), we know that $\HH^1(C,T_{X'}|_{C_2})=0$,  and so if $C=C_2$ the proof is finished. If else $C_1 \not = \emptyset$ we consider the exact sequence
\[\shse{T_{X'}|_{C_1}(-p)}{T_{X'}|_C}{T_{X'}|_{C_2}}\]
Since $X'$ is isomorphic to $X$ in a neighborhood of $C_1$, and $X$ is convex by assumption, we  have $\HH^1(C_1,T_{X'}|_{C_1}(-p))=0$ by (32) in the Proof of \cite[Lemma~10]{FP}, so  (\ref{eq:h1}) holds.
\end{proof}

\begin{proposition}\label{prop:resolution}
Under the hypotheses of Proposition \ref{prop:smoothChows}, we have a strong factorization:
$$
\xymatrix{\GX_{0,1}\ar@/^18pt/@{-->}[rrrrrr]^{\psi}&\CX_{0,2}\ar[l]^(0.42){s}&\dots \quad&\CX\ar[l]^(0.42){s}\ar[r]_(0.42){d}&\quad \dots&\CX_{r-2,r}\ar[r]_(0.42){d}&\GX_{r-1,r}}
$$
\end{proposition}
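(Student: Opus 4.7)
The plan is to combine Proposition \ref{prop:smoothChows} with the Andreatta--Wi\'sniewski blowdown criterion \cite[Corollary~4.11]{AWDuke}. By Proposition \ref{prop:smoothChows}, all the varieties appearing in the factorization --namely $\CX_{0,j}$ for $2\le j\le r$ and $\CX_{i,r}$ for $0\le i\le r-2$-- are smooth. The two endpoints $\GX_{0,1},\GX_{r-1,r}$ are smooth as well by Remark \ref{rem:normalquot}. Moreover, from Proposition \ref{prop:blowup} together with Lemma \ref{lem:blowup1}, each of the maps $s\colon \CX_{0,j}\to \CX_{0,j-1}$ and $d\colon \CX_{i,r}\to \CX_{i+1,r}$ is the normalization of a blowup; since both source and target are now smooth (in particular, normal), each normalization is already an isomorphism and we have genuine blowup morphisms between smooth varieties.

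To upgrade these to \emph{smooth} blowups, I would apply \cite[Corollary~4.11]{AWDuke}, which characterizes the blowup of a smooth center in a smooth variety in terms of a proper birational morphism between smooth varieties whose non-trivial fibers are equidimensional projective spaces. The equidimensionality is already implicit in the description of the fibers given in the proofs of Lemma \ref{lem:blowup1} and Lemma \ref{lem:blowup2}: the fiber of $s\colon \CX_{0,j}\to \CX_{0,j-1}$ (resp.\ of $d\colon\CX_{i,r}\to \CX_{i+1,r}$) over a point parametrizing an invariant cycle with source (resp.\ sink) at a fixed point $P$ of weight $a_{j-1}$ (resp.\ $a_{i+1}$) is in natural bijection with $\P(\cN^+(Y_{j-1})^\vee_P)$ (resp.\ $\P(\cN^-(Y_{i+1})^\vee_P)$). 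By the equalization hypothesis, the rank $\nu^+(Y_{j-1})$ (resp.\ $\nu^-(Y_{i+1}))$ is locally constant along each connected fixed-point component, so the dimensions of the fibers are locally constant on the exceptional divisor.

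The main obstacle is handling the case in which the center $S_{0,j-1}\subset\CX_{0,j-1}$ (resp.\ $D_{i+1,r}\subset\CX_{i+1,r}$) of Remark \ref{rem:center} is disconnected, with components coming from different fixed-point components $Y\subset Y_{j-1}$ (resp.\ $Y\subset Y_{i+1}$) of possibly different $\nu^+(Y)$ (resp.\ $\nu^-(Y)$). Since these components are disjoint, I would apply the Andreatta--Wi\'sniewski criterion component by component: over a neighborhood of each connected component of the center the fibers are equidimensional projective spaces, so locally the map is a smooth blowup, and these local smooth blowups patch together into a global smooth blowup. Once this is established for each elementary map $s$ or $d$ in the chain, the composition yields the desired strong factorization of $\psi$.
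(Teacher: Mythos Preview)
Your approach is essentially the same as the paper's: both arguments combine the smoothness of the $\CX_{0,j}$ and $\CX_{i,r}$ from Proposition \ref{prop:smoothChows}, the blowup structure from Proposition \ref{prop:blowup}, the constant fiber dimension on each component of the exceptional divisor (extracted from Lemma \ref{lem:blowup2}), and then invoke \cite[Corollary~4.11]{AWDuke}. One small remark: your claim that ``since both source and target are smooth, each normalization is already an isomorphism'' is not quite justified---smoothness of $\CX_{0,j}$ does not by itself force the intermediate blowup $\Bl_{S}(\CX_{0,j-1})$ to be normal---but this step is unnecessary, since the Andreatta--Wi\'sniewski criterion applies directly to the morphism $s\colon\CX_{0,j}\to\CX_{0,j-1}$ between smooth varieties and already yields that it is a smooth blowup.
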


\begin{proof}
We know that the maps $s,d$ above are normalizations of blowups (Proposition \ref{prop:blowup}). Moreover the exceptional loci of these maps are divisors, whose connected components are irreducible (by Lemma \ref{lem:blowup2} applied to the prunings $X_{i,j}$, $i=0$ or $j=r$). Since for every component of the exceptional locus of each map $s$ or $d$, the dimension of the fibers is constant, it follows by \cite[Corollary~4.11]{AWDuke} that the maps $s,d$ are blowups with smooth centers.
\end{proof}
%

A natural problem that presents at this point is the full description of the Nef, Movable and Effective cones of $\CX$. Our results suggest that an answer can be given for the Chow quotients of a convex variety of Picard number one endowed with an equalized $\C^*$-action (see \cite{WORS3} for a description of these actions in the case of rational homogeneous spaces).


\section{Final remarks}\label{sec:final}

In Theorem \ref{thm:main} we assumed that  $\codim(B^\pm(Y),X)>1$ for every  $Y\in\cY^\circ$, or equivalently, that the induced action on $X_{0,r}$ is a bordism. However, the pruning construction allows us to extend the statement to a broader setting. For instance, we may prove the following:

\begin{corollary}\label{cor:main}
Let $(X,L)$ be a polarized pair, with $X$ smooth, endowed with an equalized nontrivial $\C^*$-action. Then the conclusions of Theorems \ref{thm:main} and \ref{thm:convex} hold if the action satisfies the following condition:
\begin{equation}\label{eq:condition}\tag{$\star$} 
\text{$\codim(B^\pm(Y),X)>1$ for every $Y\in \cY$ of weight $a_i$ with $2 \le i \le r-2$.}
\end{equation}
\end{corollary}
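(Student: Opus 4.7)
The strategy is to reduce to the already-proved bordism case by passing to the inner pruning $X':=X_{1,r-1}$. Under condition~$(\star)$, the inner fixed point components of the induced $\C^*$-action on $X'$ are precisely those of $X$ with weights $a_2,\dots,a_{r-2}$; since $X'$ and $X$ agree in codimension one in a neighbourhood of each such component, the codimension condition $\codim(B^\pm(Y),X')\ge 2$ is inherited from~$(\star)$. Hence the action on $(X',L_{1,r-1})$ is a bordism and Theorems~\ref{thm:main} and~\ref{thm:convex} apply to $X'$, producing the central portion of Diagram~(\ref{fig:Chow0}) for $X$: all Chow quotients $\CX_{i,j}$ with $1\le i\le j\le r-1$, together with the blowup maps between them and, when $X$ is convex, their smoothness.

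It remains to construct and analyze the outer layers, i.e.\ the Chow quotients $\CX_{0,j}$ for $j\ge 2$ and $\CX_{i,r}$ for $i\le r-2$, including $\CX=\CX_{0,r}$ itself. The pruning morphisms $\rho_{i,j}\colon\CX\to\CX_{i,j}$ of Lemma~\ref{lem:projC} and the elementary pruning maps $\prul,\prur$ between Chow quotients are defined purely in terms of the weight decomposition of global sections and the CPN condition of Setup~\ref{set:CPNs}, neither of which uses the bordism hypothesis; hence these maps exist also on the outer boundary of Diagram~(\ref{fig:Chow0}). To show that they are normalizations of blowups, one follows the pattern of Proposition~\ref{prop:blowup}: the base case (Lemma~\ref{lem:blowup1}) is a local computation at an inner fixed point component which is unaffected by the codimension of $B^\pm(Y)$ there, and the inductive step (Lemma~\ref{lem:blowup2}) is a formal Cartesian square argument that propagates the blowup structure outward from the central region already obtained in the previous paragraph.

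Finally, the smoothness claim of Theorem~\ref{thm:convex} extends: Proposition~\ref{prop:smoothChows} uses convexity of $X$ only to force $H^1(C,{T_X}|_C)=0$ on invariant cycles, and Proposition~\ref{prop:resolution} invokes the Andreatta--Wi\'sniewski criterion, both local in nature and insensitive to bordism. The principal obstacle is to justify that the outer prunings $X_{0,j}$ and $X_{i,r}$ are well-defined smooth $\C^*$-equivariant birational modifications of $X$ under~$(\star)$: without the full bordism hypothesis at weights $a_1$ and $a_{r-1}$, some elementary pruning steps between outer layers are divisorial contractions rather than small $\Q$-factorial modifications. One must therefore extend Proposition~\ref{prop:WORS3} by tracking which steps are small and which are divisorial, verifying in particular that each divisorial step contracts a BB-cell of a component of weight $a_1$ or $a_{r-1}$ onto the smooth extremal fixed component $Y_0$ or $Y_r$, and thus preserves smoothness of the total space.
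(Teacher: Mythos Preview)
Your strategy and the paper's share the same opening move—reduce to an inner pruning on which the full bordism hypothesis is restored—but they diverge sharply in how the outer layers of Diagram~(\ref{fig:Chow0}) are dealt with.

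The paper does not rebuild the outer layers at all. Reusing the fibre analysis from the proof of Lemma~\ref{lem:blowup2}, it observes that when the codimension condition fails at weight $a_1$ the elementary maps $d\colon\CX_{0,j}\to\CX_{1,j}$ are actually \emph{isomorphisms} for every $j\ge 1$ (and symmetrically at weight $a_{r-1}$ for the maps $s$): the fibres $\P(\cN^{+}(Y)^\vee_P)$ that would normally make $d$ a nontrivial blowup collapse to points. Thus the outermost rows of the diagram coincide with the next ones in, $\CX=\CX_{0,r}\simeq\CX_{1,r}$, and one finishes with a single application of Theorems~\ref{thm:main} and~\ref{thm:convex} to that pruning. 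Nothing about the outer prunings $X_{0,j}$, $X_{i,r}$ as smooth varieties is ever needed.

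Your route instead treats the outer Chow quotients as genuinely new objects and tries to verify the blowup and smoothness statements for them directly. This obliges you to construct the outer prunings $X_{0,j}$, $X_{i,r}$ as smooth $\C^*$-varieties, which—as you yourself flag in the last paragraph—lies outside the scope of Proposition~\ref{prop:WORS3} once the elementary pruning steps at weights $a_1$, $a_{r-1}$ become divisorial rather than small. You do not close this gap, and without it your appeals to Setup~\ref{set:CPNs}, Lemma~\ref{lem:blowup1}, and Proposition~\ref{prop:smoothChows} on the outer layers are not justified. The paper's collapsing-isomorphism observation bypasses the obstacle entirely.
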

\begin{proof}
As already observed in Remark \ref{rem:projC2}, $\CX =\CX_{0,r}$, so we can always assume that the action is of B-type. 
If Condition (\ref{eq:condition}) is fulfilled, by Theorem \ref{thm:main}, we can assume that either $\codim(B^-(Y_1),X)=1$, or $\codim(B^+(Y_{r-1}),X)=1$.
In the first case (the second is analogous) the arguments in the proof of Lemma \ref{lem:blowup2} show that $\CX_{0,j} \simeq \CX_{1,j}$ for every $j \ge 1$. 
In particular $\CX_{0,r} \simeq \CX_{1,r}$ and we conclude observing that $X_{1,r}$ satisfies the hypotheses of Theorem \ref{thm:main}. 
\end{proof}

\begin{remark}\label{rem:isoextreme}
Note that Condition (\ref{eq:condition}) holds, for instance, in the case in which $\Pic(X) \simeq \Z$ and the extremal fixed point components of the action are isolated (see \cite[Lemma~2.8 (2)]{WORS1}). The $\C^*$-actions of this kind on rational homogeneous spaces have been described in \cite[Section~3.4]{WORS5}). In particular, our result can be applied to the $\C^*$-actions whose Chow quotients are the classical spaces of complete collineations, complete quadrics and complete symplectic quadrics (see \cite{Thaddeus}). For these spaces a description of the Nef, Movable and Effective cones has been given in \cite{Hue,Mas1,Mas3}.\end{remark}

Another relevant example in which Condition (\ref{eq:condition}) holds is the diagonal action on  $X=(\mathbb{P}^1)^n$, whose Chow quotient $\CX$ has appeared extensively in the literature in different settings. For instance, it is isomorphic to a Losev--Manin moduli space 
of pointed stable curves of genus zero (see \cite{LS}), and its associated polytope (as a toric variety) is the permutahedron of order $n$, classically known and studied in combinatorics (cf. \cite{post}). We will see that, for this example, all the intermediate Chow quotients $\CX_{i,j}$ are smooth and all the maps $s$ and $d$  appearing in Diagram \ref{fig:Chow} are blowups along smooth centers (or isomorphisms).

\begin{example}\label{ex:perm} Let $X$ be the projective variety $(\mathbb{P}^1)^n$. It  is a toric variety with an action of the torus $\TT:=(\C^*)^n$.  
The associated fan in the lattice $N$ of $1$-parameter subgroups --with $\Z$-basis $\{e_1,\dots, e_n\}$-- has $2n$ rays (generated by  $\pm e_i$). It has $2^n$ maximal cones $\langle \pm e_1, \dots, \pm e_n\rangle$ (one cone for each sequence of signs $\pm$). We take the action associated to the $1$-parameter subgroup $\nu=e_1+\cdots+e_n$, denote it by $\C^*_\nu\subset \TT$, and set $\TT':=\TT/\C^*_\nu$. Note that we have an induced action of $\TT'$ on the GIT and Chow quotients of $X$ by $\C^*_\nu$.

    We take the ample line bundle $\mathcal{O}_X(1,\dots,1)$ and the linearization of the action of $\TT$ such that the associated polytope $\Delta$ is the unit cube:  If $\{e_1^*,\dots, e_n^*\}$ is the dual basis of $\{e_1,\dots, e_n\}$, then the vertices of the cube --which are associated to $\TT$-fixed points-- are the points of the form $$e^*_I=\sum_{i\in I} e_i^*,\quad I\subseteq\{1,\dots n\}.$$
    
    The $\C^*_\nu$-action  
    is associated with a projection $ M\otimes\R\to \R$, that we denote by $\nu$ as well. The criticality of the action is $n$, with critical values $0,\dots,n$. As described in Example \ref{ex:toric}, the semi-geometric quotient $\GX_{j,j}$ associated to the critical value $j\in\{0,\dots,n\}$ is the $(n-1)$-dimensional toric variety defined by the lattice polytope $\nu^{-1}(j)\cap \Delta$, associated with an ample line bundle $\cL_{j,j}\in \Pic(\GX_{j,j})$ and a linearization of the $\C^*_\nu$-action. Note that the vertices of the polytope $\nu^{-1}(j)\cap \Delta$ are the elements of the form $e^*_I$ with $|I|=j$.  
    
Now we consider the $\TT'$-action on $\CX=\CX_{0,n}$. Notice that the orbits of the $\C^*_\nu$-action  on $X=X(\Delta)$ which are invariant with respect to the quotient torus $\TT'$ correspond to the edges of the cube $\Delta$. Therefore the $\TT'$-fixed points of $\CX_{0,n}$  correspond to trees of $\mathbb{P}^1$'s associated to a sequence of consecutive edges of the cube linking $e^*_\emptyset=0$ with $e^*_1+\dots +e^*_n$; equivalently such a tree is determined by a sequence of cube vertices $v_0=0,v_1,\dots,v_n=e^*_1+\cdots+e^*_n$ such that there exists a (unique) permutation $\sigma\in S_n$ with $$v_{n-i-1}=v_{n-i}-e^*_{\sigma(i)}\ \text{for}\ i=1,\dots,n.$$ 
Such a curve and its associated point in $\CX$ will be denoted by $C_\sigma$. 

Let us now compute the weights of the $\TT'$-action on $\CX$, with respect to a particular ample line bundle $\cL$, defined as follows; abusing notation, we denote by $\cL_{j,j}\in \Pic(\CX)$ the pullback of $\cL_{j,j}\in \Pic(\GX_{j,j})$, and set:
$$\cL:=\cL_{0,0}\otimes \dots \otimes \cL_{n,n}\in \Pic(\CX).$$
As Lemma \ref{lem:ampleg} below shows, $\cL$ is ample on $\CX$. Combining the  $\TT'$-linearizations on the bundles $\cL_{j,j}$ (and their presentations as sections of the unit cube) we get a $\TT'$-linearization on $\cL$. 
In particular, we get a linearization of the action of $\TT'$ on $\cL$, and the weight of the $\TT'$-action with respect to $\cL$ on the fixed point $C_\sigma$ is 
    $$v_n+(v_n-e^*_{\sigma(1)})+\cdots+(v_n-(e^*_{\sigma(1)}+\cdots+e^*_{\sigma(n-1)}))=\sum_{i=1}^n i\cdot e^*_{\sigma(i)}\in \Mo(\TT')\subset M.$$
   This shows that the polytope associated to the pair $(\CX,\cL)$ is the permutahedron of order $n$ (see \cite[9.2]{Ziegler}).
\end{example}

\begin{lemma}\label{lem:ampleg}
Let $(X,L)$ be a polarized pair, with $X$ smooth, endowed with an equalized nontrivial $\C^*$-action, satisfying Condition (\ref{eq:condition}).
Let $\cL_{i,i+1} \in \Pic(\CX)$ (resp. $\cL_{i,i} \in \Pic(\CX)$) be pullbacks of ample line bundles on $\GX_{i,i+1}$, (resp. on $\GX_{i,i}$).  Then the line bundles
\[
\cL_{0,1}\otimes\cdots\otimes\cL_{r-1,r}, \qquad \cL_{0,0}\otimes\cdots\otimes\cL_{r,r}
\]
are ample on $\CX=\CX_{0,r}$.
\end{lemma}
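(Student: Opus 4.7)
The plan is to deduce both ampleness assertions from the general principle that the pullback of an ample line bundle by a finite morphism is ample. Concretely, I would consider the two product morphisms
\[
\pi_1 := (\rho_{i,i+1})_{i=0}^{r-1}: \CX \lra \prod_{i=0}^{r-1} \GX_{i,i+1}, \qquad \pi_2 := (\rho_j)_{j=0}^{r}: \CX \lra \prod_{j=0}^{r} \GX_{j,j},
\]
where $\rho_j: \CX \to \GX_{j,j}$ denotes the composition of $\rho_{j-1,j}$ (or $\rho_{j,j+1}$) with the contraction appearing in Diagram (\ref{eq:GITquot}). Under these maps, $\cL_{0,1}\otimes\cdots\otimes\cL_{r-1,r}$ and $\cL_{0,0}\otimes\cdots\otimes\cL_{r,r}$ are respectively the pullbacks of the exterior tensor products of ample line bundles $\boxtimes_i \cL_{i,i+1}$ and $\boxtimes_j \cL_{j,j}$, which are ample on the corresponding product varieties. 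It therefore suffices to prove that $\pi_1$ and $\pi_2$ are finite. Both maps are proper as morphisms between projective varieties, so finiteness reduces to showing injectivity on closed points, which I would establish using Lemma \ref{lem:reduced}.

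For $\pi_1$, every closed point $c \in \CX$ corresponds to a reduced chain of rational curves $C_c$ whose dual graph is of type $\DA$; the image $\rho_{i,i+1}(c) \in \GX_{i,i+1}$ is the unique irreducible component of $C_c$ whose weight interval strictly contains $(a_i,a_{i+1})$. Given the tuple $(\rho_{i,i+1}(c))_i$, every irreducible component of $C_c$ appears at least once, and $C_c$ is reconstructed as the union of the corresponding orbit closures in $X$. Hence $c$ is uniquely recovered and $\pi_1$ is injective, so finite. For $\pi_2$ the analysis is analogous: for each $j$, the image $\rho_j(c) \in \GX_{j,j}$ is either a fixed point of $C_c$ of weight $a_j$ (a point of $Y_j \subset \GX_{j,j}$) or the image of the unique component of $C_c$ strictly crossing $a_j$ (a point of $\GX_{j,j} \setminus Y_j$, which is identified with an open subset of $\GX_{j-1,j}$ or $\GX_{j,j+1}$). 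From the whole tuple one recovers the sink ($\GX_{0,0}=Y_0$), the source ($\GX_{r,r}=Y_r$), every intermediate fixed point, and every component of $C_c$ strictly crossing some interior critical value.

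The main obstacle is the last step of the injectivity argument for $\pi_2$: the components of $C_c$ with weight interval equal exactly to $[a_i,a_{i+1}]$ for consecutive critical values are not directly visible through any $\rho_j$, but their source and sink are among the data already recovered. To conclude, I would show that under the equalization hypothesis together with Condition~(\ref{eq:condition}), the closure of a $\C^*$-orbit with prescribed endpoints in $Y_i$ and $Y_{i+1}$ is uniquely determined. This requires a local analysis at an inner fixed point component $Y$ via the normal bundle decomposition $\cN_{Y|X} = \cN^+(Y) \oplus \cN^-(Y)$ of (\ref{eq:normal+-}), using that equalization forces the weights on $\cN^\pm(Y)$ to be $\pm 1$, and using the codimension hypothesis to rule out positive-dimensional families of short orbits with matched endpoints. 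Once this uniqueness is established, $\pi_2$ is injective on closed points, hence finite, and the second ampleness assertion follows.
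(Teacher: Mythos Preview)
Your overall strategy---reducing ampleness to finiteness of the product morphisms $\pi_1,\pi_2$---is exactly what the paper does. For $\pi_1$ your argument matches the paper's, with one minor caveat: since $\CX$ is only the \emph{normalization} of the Chow quotient $\overline{\CX}$, the reconstruction of the cycle from its components gives injectivity at the level of $\overline{\CX}$, not $\CX$; the paper then concludes using that the normalization map is finite. You should phrase the argument in terms of finite fibers rather than injectivity.

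For $\pi_2$ your route diverges from the paper's, and the final step is a genuine gap. You aim to prove that an orbit closure with prescribed sink in $Y_i$ and source in $Y_{i+1}$ is \emph{unique}, via a ``local analysis'' of the normal bundle and Condition~(\ref{eq:condition}). But the codimension hypothesis on $B^\pm(Y)$ gives no direct control on how many orbits join two specified fixed points, and your sketch does not indicate how equalization alone would force uniqueness. Moreover, you only need finiteness of the fibers, not injectivity, so uniqueness is stronger than required. The paper avoids this by factoring through $\pi_1$: it suffices to show that each map $\GX_{i,i+1}\to\GX_{i,i}\times\GX_{i+1,i+1}$ is finite. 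If a curve $B\subset\GX_{i,i+1}$ were contracted, one would have a one-parameter family of orbit closures sharing both endpoints, and Mori's Bend-and-Break would then produce a reducible or non-reduced $\C^*$-invariant cycle with sink of weight $a_i$ and source of weight $a_{i+1}$, which is impossible since $a_i,a_{i+1}$ are consecutive critical values. This deformation-theoretic argument is what your local analysis is missing; replacing your uniqueness claim with the Bend-and-Break step would complete the proof along your lines.
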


\begin{proof} 
To prove that $\cL_{0,1}\otimes\cdots\otimes\cL_{r-1,r}$ is ample
it is enough to show that the product morphism $\CX_{0,r} \to \Pi_i \GX_{i,i+1}$ is finite. This morphism maps a point $c \in \CX_{0,r}$ parametrizing a cycle
$C$ whose irreducible components  $C_1, \dots, C_s$ are closures of orbits, to a point of the form $([C_{j_0}], \dots, [C_{j_{r-1}}])$, for certain $j_0,\dots,j_s\in \{0,\dots,r\}$.
By definition $C_{j_i}$ is the (unique) irreducible component of $C$ such that the weights of $L$ at its sink and source are smaller than or equal to $a_i$, and bigger than or equal to $ a_{i+1}$, respectively.

Therefore two cycles in the inverse image of a point $c'$ in the image of $\CX_{0,r} \to \Pi_i \GX_{i,i+1}$ have the same irreducible components; by Lemma \ref{lem:reduced} they have the same image in the Chow quotient $\overline{\CX}\subset\Chow(X)$. Since $\CX$ is the normalization of $\overline{\CX}$,  the result follows.

The ampleness of $ \cL_{0,0}\otimes\cdots\otimes\cL_{r,r}$ will follow from the first part if we show that
 the product morphisms $\GX_{i,i+1}\rightarrow\GX_{i,i}\times\GX_{i+1,i+1}$ are finite, for every $i$. Let us notice that two elements $[C]$ and $[C']$ of $\GX_{i,i+1}$ are mapped to the same element of $\GX_{i,i}$ (resp. $\GX_{i+1,i+1}$) if and only if they have the same sink, with $L$-weight $a_i$ (resp.  the same source, with $L$-weight $a_{i+1}$).
Assume by contradiction that for some $i$ there is a curve $B\subset\GX_{i+i+1}$ which is contracted by $\GX_{i,i+1}\rightarrow\GX_{i,i}\times\GX_{i+1,i+1}$. Then there is a one dimensional family $\{C_b, b \in B\}$ of closures of orbits with the same sink of $L$-weight $a_i$ and the same source of  $L$-weight $a_{i+1}$, hence,  by Mori's Bend-and-Break (see \cite[Proposition 3.2]{De}) a reducible or non reduced $\C^*$-invariant cycle with sink of $L$-weight $a_i$ and source of  $L$-weight $a_{i+1}$, a contradiction.
\end{proof}

It is known (see \cite[1.1]{DRS}) that the Chow quotient $\CX$ of $X=(\P^1)^n$ by the diagonal $\C^*$-action is smooth, and that it is isomorphic to the Losev-Manin space $\overline{L}_n$ (see \cite{LS}), hence it can be constructed starting from $\P^{n-1}$ (which is a GIT quotient of $X$) via a sequence of blowups along smooth centers (see \cite[6.1]{Hassett}). We will now show that also all the intermediate Chow quotients $\CX_{i,j}$ are smooth, and that the maps $s,d$ among them are smooth blowups. In particular, $\CX$ can be constructed upon any geometric quotient $\GX_{i,i+1}$ of $X$ by a certain sequence of smooth blowups.

\begin{proposition}
For the action described in Example \ref{ex:perm}, all the intermediate Chow quotients $\CX_{i,j}$ are smooth and all the maps $s$ and $d$  appearing in Diagram (\ref{fig:Chow}) are blowups along smooth centers or isomorphisms.
\end{proposition}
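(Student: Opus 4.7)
The plan is to combine Corollary~\ref{cor:main} (for the outer layers of Diagram~(\ref{fig:Chow})) with the toric structure of the example (for the inner ones). First I would verify the hypotheses of Corollary~\ref{cor:main}: the fixed points $e^*_I$ of the diagonal action on $X=(\P^1)^n$ are isolated, of weight $|I|$, and a direct computation at the tangent spaces gives $\codim B^+(e^*_I)=|I|$ and $\codim B^-(e^*_I)=n-|I|$, both of which are at least $2$ whenever $2\le |I|\le n-2$, so Condition~($\star$) holds. Moreover $(\P^1)^n$ is rational homogeneous, hence convex. Thus, by Corollary~\ref{cor:main} combined with Theorem~\ref{thm:convex}, the outer Chow quotients $\CX_{0,j}$ (for $2\le j\le n$) and $\CX_{i,n}$ (for $0\le i\le n-2$) are smooth, and the outer $s$- and $d$-maps among them in Diagram~(\ref{fig:Chow}) are smooth blowups along smooth centers, or isomorphisms.

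To handle the inner Chow quotients, I would exploit the toric structure. Since $X=(\P^1)^n$ is toric and the $\C^*$-action is by a subtorus $\C^*_\nu\subset\TT=(\C^*)^n$, by \cite{KSZ} every pruning $X_{i,j}$ is toric (its polytope being the slab $[0,1]^n\cap\nu^{-1}([\tau_-,\tau_+])$ of the unit cube), every normalized Chow quotient $\CX_{i,j}$ is a toric variety for the quotient torus $\TT'=\TT/\C^*_\nu$, and every $s,d$-map in Diagram~(\ref{fig:Chow}) is $\TT'$-equivariant. I would then propagate smoothness inward by induction on the ``boundary distance'' $i+(n-j)$, using the rhombus identification $\CX_{i,j}=\widetilde{\CX_{i,j-1}\times_{\CX_{i+1,j-1}}\CX_{i+1,j}}$ from Theorem~\ref{thm:main}. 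Assuming inductively that $\CX_{i,j-1}$, $\CX_{i+1,j}$ and $\CX_{i+1,j-1}$ are smooth and the two maps into $\CX_{i+1,j-1}$ are smooth blowups along smooth centers, the smoothness of $\CX_{i,j}$ (together with the fact that the fiber product is already normal) reduces to the transversality in $\CX_{i+1,j-1}$ of the two $\TT'$-invariant blowup centers $D_{i+1,j-1}$ and $S_{i+1,j-1}$ described in Remark~\ref{rem:center}; once transversality is established, the induced maps $s,d$ out of $\CX_{i,j}$ are themselves smooth blowups along the (smooth) pullbacks of the transverse centers.

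The main obstacle, and the core technical step, is this toric transversality. The key observation is that $D_{i+1,j-1}$ (resp.\ $S_{i+1,j-1}$) parametrizes cycles in $\CX_{i+1,j-1}$ whose sink (resp.\ source) is at one of the isolated torus-fixed points $e^*_I$ with $|I|=i+1$ (resp.\ $|I|=j-1$): these are independent ``sink-end'' and ``source-end'' conditions on the chain of invariant curves. At each $\TT'$-fixed point of $\CX_{i+1,j-1}$ in $D\cap S$, I expect a local product decomposition of the toric variety whose factors separate sink-end deformations from source-end ones (generalizing the permutahedron calculation from Example~\ref{ex:perm}), so that $D$ and $S$ cut in complementary toric coordinate directions. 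More concretely, the polytope of $\CX_{i,j}$ should have vertices parametrized by triples $(I,J,\sigma)$ with $I\subset J\subset\{1,\dots,n\}$, $|I|=i$, $|J|=j$, and $\sigma$ an ordering of $J\setminus I$, and should be a simple lattice polytope with lattice-basis edges at every vertex; carrying out the explicit combinatorial verification of this description is where the real technical work of the proof lies.
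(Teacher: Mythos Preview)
Your outer-layer argument via convexity is fine, but the inner-layer argument has a genuine gap: the transversality of $D_{i+1,j-1}$ and $S_{i+1,j-1}$ in $\CX_{i+1,j-1}$ is only ``expected,'' and as you yourself acknowledge, the explicit polytope combinatorics needed to verify it is precisely the hard core that you have not carried out. (A minor additional issue: the induction you describe must run on increasing $j-i$, i.e.\ \emph{decreasing} $i+(n-j)$, from the bottom of Diagram~(\ref{fig:Chow}) upward; your inductive hypothesis is correctly stated for the three varieties below $\CX_{i,j}$, but the phrase ``propagate inward'' and the choice of induction variable obscure this.)

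The paper sidesteps the transversality check entirely by a different induction---not on the layer $j-i$ for fixed $n$, but on $n$ itself. The structural observation your proposal misses is that for a fixed point $x_k\in X^n=(\P^1)^n$ of weight $k$ one has $B^+(x_k)\simeq(\P^1)^k$ and $B^-(x_k)\simeq(\P^1)^{n-k}$, and their images under pruning are again prunings of these smaller products. Hence (Remark~\ref{rem:center}) the center $S_{i,j-1}$ of $s:\CX^n_{i,j}\to\CX^n_{i,j-1}$ is a disjoint union of copies of $\CX^{j-1}_{i,j-1}$, which is smooth by the inductive hypothesis on $n$ (the base case being $n=4$, where the inner triangle reduces to the single smooth variety $\CX^4_{1,3}$). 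Since the blowup of a smooth variety along a smooth center is smooth, one then climbs the diagram by a second induction on $j-i$, the bottom row being handled by Lemma~\ref{lem:blowup1}. This recursive self-similarity of $(\P^1)^n$ is what makes the example tractable; your toric-transversality route could in principle succeed, but it trades a short structural observation for a substantial combinatorial computation that you have not performed.
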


\begin{proof}
In order to use inductive arguments, let us set $X^n:=(\PP^1)^n$.
After blowing up the sink and the source of the action on $X^n$ we have a B-type action on $X^n_{0,n}$ which satisfies Condition (\ref{eq:condition}); as observed at the beginning of the section we  have $\CX^n_{0,j} \simeq \CX^n_{1,j}$ for $j \ge 1$ and  $\CX^n_{i,n-1} \simeq \CX^n_{i,n}$ for $i \le n-1$ and $X^n_{1,n-1}$ satisfies the assumptions of Theorem \ref{thm:main}.

Moreove we know, by Lemma \ref{lem:blowup1}, that $s:\CX^n_{i,i+2} \to \GX^n_{i,i+1}$ and $d:\CX^n_{i,i+2} \to \GX^n_{i+1,i+2}$ are blowups of smooth varieties along smooth centers. We will then consider only the ``inner'' triangle:

\begin{equation*}
\begin{tikzcd}[
  column sep={2.7em,between origins},
  row sep={3.2em,between origins}]
&&&&&&\CX^n_{1,n-1}\arrow[rd,"\prur"] \arrow[dl,"\prul",labels=above left] &&&&&&\\
&&&&&\CX^n_{1,n-2}\arrow[rd,"\prur"] \arrow[dl,"\prul",labels=above left]&&\CX^n_{2,n-1}\arrow[rd,"\prur"] \arrow[dl,"\prul",labels=above left]&&&&&\\
&&&&\CX^n_{1,n-3} \arrow[rd,dotted,no head]\arrow[dl,"\prul",labels=above left]&&\CX^n_{2,n-2}\arrow[ld,dotted,no head]\arrow[rd,dotted,no head]&&\CX^n_{3,n-1}\arrow[rd,"\prur"]\arrow[ld,dotted,no head]&&&&\\
&&& \arrow[ld,dotted,no head]&&~ &&~&&\arrow[rd,dotted,no head]&&&\\
&&\CX^n_{1,3}&&\dots &&\dots&&\dots&&\CX^n_{n-3,n-1} &&\\
\end{tikzcd}
\end{equation*}

Since we know that the varieties in the bottom row are smooth, and that the maps $s,d$ are blowups, it will be enough to prove that the centers are smooth varieties. Note that, for $n=4$ the ``inner'' triangle consists only of the smooth element $\CX^4_{1,3}$, so the statement is true. We will  now assume that the statement holds for $X^l$, with $l \le n-1$, and prove it for $X^n$.

To describe the centers we start by describing the closure of the BB-cells of the action. Let $x_k$ be a fixed point of weight $k$; then $B^+(x_k) \simeq X^k, B^-(x_k) \simeq X^{n-k}$.
For every integer $k$ such that $i\le k \le j$, the image of $B^\pm(x_k)$ in the pruning $X^n_{i,j}$ ($i\le k \le j$) is the pruning of $B^\pm(x_k)$,  that is:
\[
 \varphi_{i,j}(B^+(x_k)) \simeq X^{k}_{i,k}, \qquad \varphi_{i,j}(B^-(x_k)) \simeq  X^{n-k}_{k,j}.
\]

Now we observe that, with the notation of Remark \ref{rem:center},  $S_{i,k}$ is the Chow quotient of the (disjoint) union over the fixed points of weight $k$ of $\varphi_{i,k}(B^+(x_k)) \simeq X^k_{i,k}$ and, similarly, $D_{k,j}$ is the Chow quotient of the (disjoint) union over the fixed points of weight $k$ of $\varphi_{k,j}(B^-(x_k)) \simeq X^k_{k,j}$, so the center of the blowup $s:\CX^n_{i,j} \to \CX^n_{i,j-1}$ is  a disjoint union of varieties isomorphic to $\CX^{j-1}_{i,j-1}$, and the center of 
 the blowup $d:\CX^n_{i,j} \to \CX^n_{i+1,j}$ is a disjoint union of varieties isomorphic to $\CX^{i+1}_{i+1,j}$.
By the inductive assumption, since $1 \le i < j \le n-1$, the centers of the blowups are smooth and the proof is concluded.
\end{proof}

\begin{remark} The geometric quotient $\GX^n_{1,2}$ is the sink of $X^n_{1,2}$, which is the projective space $\PP^{n-1}$ blown up at $n$ points. These points are the intersection of the exceptional divisor of the blow up of $X^n$ along the sink with the varieties $B^+(x_1)$, where $x_1$ is a fixed point of weight one.
It can be shown, recursively, that the blowups 
\[\CX=\CX^n_{1,n-1} \to \CX^n_{1,n-2} \to \dots \to \CX^n_{1,3} \to \GX^n_{1,2}\]
are the ones giving the Losev--Manin space $\overline L_n$ in 
\cite[Section~6.1]{Hassett}.
\end{remark}


\bibliographystyle{plain}
\bibliography{bibliomin}

\end{document}